\tikzset{>=latex,shift left/.style ={commutative diagrams/shift left={#1}},
  shift right/.style={commutative diagrams/shift right={#1}}}
\newcolumntype{L}{>{\arraybackslash}X}
\theoremstyle{plain}
\newtheorem{theorem}{Theorem}[section]
\theoremstyle{remark}
\newtheorem{remark}[theorem]{Remark}
\newtheorem{example}[theorem]{Example}
\theoremstyle{plain}
\newtheorem{lemma}[theorem]{Lemma}
\newtheorem{proposition}[theorem]{Proposition}
\newtheorem{definition}[theorem]{Definition}
\newtheorem{assumption}[theorem]{Assumption}
\numberwithin{equation}{section}
\def\N{{\mathbb N}}
\def\R{{\mathbb R}}
\def\T{{\mathbb T}}
\newcommand{\E}{{\mathbb E}}
\renewcommand{\P}{{\mathbb P}}
\newcommand{\F}{{\mathscr F}}
\newcommand{\g}{\gamma}
\newcommand{\om}{\omega}
\renewcommand{\O}{\Omega}
\newcommand{\loc}{{\rm loc}}
\newcommand{\Tor}{\mathbb{T}}
\newcommand{\Dom}{\mathcal{O}}
\newcommand{\Ls}{\mathbb{L}}
\newcommand{\Hs}{\mathbb{H}}
\newcommand{\calL}{\mathcal{L}}
\DeclareMathOperator*{\esssup}{\textup{ess\,sup}}
\newcommand{\wt}{\widetilde}
\newcommand{\one}{{{\bf 1}}}
\newcommand{\embed}{\hookrightarrow}
\newcommand{\s}{\delta}
\renewcommand{\div}{\normalfont{\text{div}}}
\newcommand{\supp}{\mathrm{supp}\,}
\newcommand{\norm}[1]{{\left\vert\kern-0.25ex\left\vert\kern-0.25ex\left\vert #1
    \right\vert\kern-0.25ex\right\vert\kern-0.25ex\right\vert}}
\renewcommand{\emptyset}{\varnothing}
\newcommand{\Progress}{\mathcal{P}}
\newcommand{\btwod}{b}
\newcommand{\am}{a}
\newcommand{\bm}{b}
\newcommand{\Borel}{\mathcal{B}}
\def\XXint#1#2#3{{\setbox0=\hbox{$#1{#2#3}{\int}$ }
\vcenter{\hbox{$#2#3$ }}\kern-.6\wd0}}
\newcommand{\ellip}{\nu}
\newcommand{\of}{\overline{f}}
\newcommand{\dd}{\mathrm{d}}
\newcommand{\nn}{|\!|\!|}
\begin{document}

\author{Antonio Agresti}
\address{Institute of Science and Technology Austria (ISTA), Am Campus 1, 3400 Klosterneuburg, Austria} \email{antonio.agresti92@gmail.com}
\curraddr{Delft Institute of Applied Mathematics, Delft University of Technology, P.O.\ Box 5031, 2600 GA Delft, The Netherlands}

\author{Mark Veraar}
\address{Delft Institute of Applied Mathematics\\
Delft University of Technology \\ P.O. Box 5031\\ 2600 GA Delft\\The
Netherlands.} \email{M.C.Veraar@tudelft.nl}

\thanks{The first author has received funding from the European Research Council (ERC) under the Eu\-ropean Union’s Horizon 2020 research and innovation programme (grant agreement No 948819) \includegraphics[height=0.4cm]{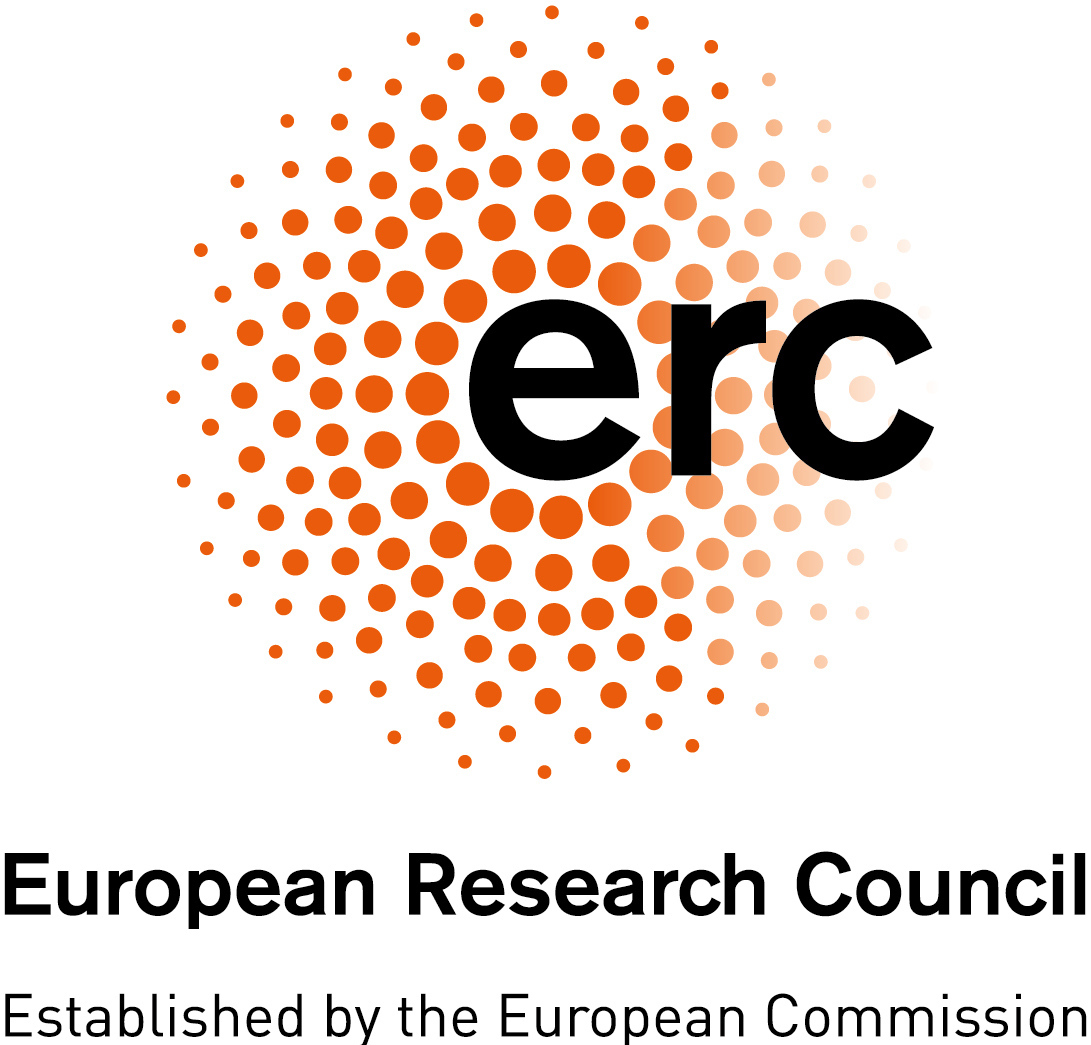}\,\includegraphics[height=0.4cm]{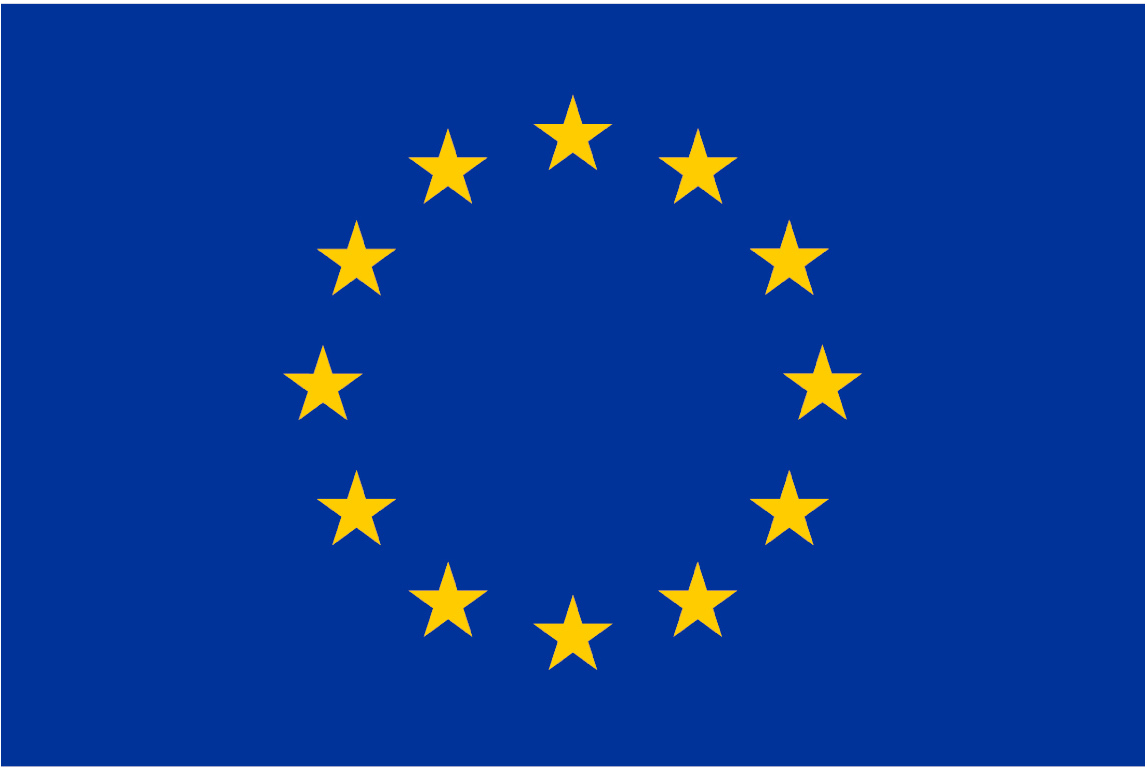}. The second author is supported by the VICI subsidy VI.C.212.027 of the Netherlands Organisation for Scientific Research (NWO)}

\date\today

\title[The critical variational setting for stochastic evolution equations]{The critical variational setting for\\ stochastic evolution equations}

\keywords{Stochastic evolution equations, variational methods, stochastic partial differential equations, quasi- and semi-linear, coercivity, critical nonlinearities, Cahn-Hilliard equation, tamed Navier-Stokes, generalized Burgers equation, Allen-Cahn equation, Swift-Hohenberg equation}

\subjclass[2010]{Primary: 60H15, Secondary: 35A01, 35B65, 35K59, 35K90, 35Q30, 35R60, 47H05, 47J35}

\begin{abstract}
In this paper we introduce the critical variational setting for parabolic stochastic evolution equations of quasi- or semi-linear type. Our results improve many of the abstract results in the classical variational setting. In particular, we are able to replace the usual weak or local monotonicity condition by a more flexible local Lipschitz condition. Moreover, the usual growth conditions on the multiplicative noise are weakened considerably. Our new setting provides general conditions under which local and global existence and uniqueness hold. In addition, we prove continuous dependence on the initial data. We show that many classical SPDEs, which could not be covered by the classical variational setting, do fit in the critical variational setting. In particular, this is the case for the Cahn-Hilliard equation, tamed Navier-Stokes equations, and Allen-Cahn equation.
\end{abstract}

\maketitle
\addtocontents{toc}{\protect\setcounter{tocdepth}{1}}
\tableofcontents

\section{Introduction}

The variational approach to stochastic evolution equations goes back to the work of \cite{bensoussan_equations_1972, KR79, Par75}. Detailed information on the topic can also be found in the monographs \cite{LR15,Rozov}. Further progress was made in the papers \cite{brzezniak_strong_2014, Gyo82}, where the L\'evy and the semimartingale case were considered respectively. Other forms of extensions of the variational setting have been proposed in several papers, and the reader is referred to \cite{Barb19, BarRoc,Gess12, rockner_2010, MarSca18, ScaSte1, ScaSte2} and references therein.
Recently, in the papers \cite{GHV, neelima_2020} a new type of coercivity condition has been found which gives sufficient conditions for $L^p(\Omega)$-estimates as well.

As usual we assume that $(V, H, V^*)$ is a triple of spaces such that $V\hookrightarrow H\hookrightarrow V^*$, where we use the identification of the Hilbert space $H$ and its dual, and where $V^*$ is the dual with respect to the inner product of $H$. In the classical framework $V$ can be a Banach space, but since we will be dealing with  non-degenerate quasi-linear equations only, it will be natural to assume $V$ is a Hilbert space as well. The stochastic evolution equation we consider can be written in the abstract form:
\begin{equation}
\label{eq:SEEintro}
\left\{
\begin{aligned}
        \dd u(t) + A(t,u(t)) \, \dd t  & = B(t,u(t))\, \dd W(t),
\\ u(0) &= u_0.
\end{aligned}
\right.
\end{equation}
Here $A:\R_+\times\Omega\times V\to \calL(V, V^*)$ and $B:\R_+\times\Omega\times V\to \calL_2(U, H)$, and we assume the structure stated in \eqref{eq:ABstruc} (see also Assumption \ref{ass:condFG}). Moreover $W$ is a $U$-cylindrical Brownian motion, where $U$ is a separable Hilbert space.

One of the advantages of the variational approach is that it gives \emph{global well-posedness} for a large class of stochastic evolution equations. Other approaches typically only give local well-posedness and sometimes (after a lot of hard work) global well-posedness under additional conditions.

\subsection{The classical variational setting}

In the variational setting there are several conditions on the nonlinearities $(A,B)$. The most important one is: for $\theta,M>0$
\begin{align}
\label{eq:coercivity_standard_intro}
\langle A(t, v), v \rangle -\tfrac{1}{2} \|B(t, v)\|^2_{\calL_2(U, H)} &\geq \theta\|v\|_V^\alpha - |\phi(t)|^2 - M\|v\|_H^2 & \text{(Coercivity)},
\end{align}
which allows to use It\^{o}'s formula to deduce a priori bounds for the solution. Moreover, in the proof of these bounds, but also at other places, one needs some boundedness properties of $(A,B)$:
\begin{align}
\label{eq:growthA}\|A(t, v)\|_{V^*}^{\frac{\alpha}{\alpha-1}}& \leq K_A(|\phi(t)|^2+\|v\|_V^{\alpha})(1+\|v\|_H^\beta) & \text{(Boundedness of $A$),}
\\
\label{eq:growthB} \|B(t, v)\|_{\calL_2(U, H)}^2 &\leq |\phi(t)|^2+K_B\|v\|_H^2 + K_\alpha\|v\|_V^\alpha   & \text{(Boundedness of $B$).}
\end{align}
In many cases the leading order part of $A$ is linear or quasi-linear and uniformly elliptic, and in that case, the coercivity condition usually forces one to take $\alpha = 2$, in which case $B$ needs to be of linear growth. Moreover, the growth condition on $A$ becomes
\[\|A(t, v)\|_{V^*}^2 \leq K_A(|\phi(t)|^2+\|v\|_V^{2})(1+\|v\|_H^\beta).\]
From an evolution equation perspective it seems more natural to have a symmetric growth condition in terms of intermediate spaces between $H$ and $V$ for the non-leading part of $A$, which is exactly what we assume later on.

In many examples a problematic requirement is the {\em local monotonicity condition}:
\begin{align*}
-2\langle A(t, u)-A(t, v),  u-v\rangle &+ \|B(t, u) - B(t, v) \|^2_{\calL_2(U, H)}  &\\
 &\leq K (1+\|v\|_V^\alpha)(1+\|v\|_H^\beta)\|u-v\|_H^2
 & \  \text{(Local Monotonicity).}
\end{align*}
One of the difficulties with this is that the ``one-sided Lipschitz constant'' on right-hand side can only grow as $\|v\|_H^{\beta}$ (or equivalently $\|u\|_{H}^{\beta}$), but not in $\|u\|_{H}^{\beta}$ and $\|v\|_{H}^{\beta}$ at the same time. Typically it fails whenever spatial derivatives of $u$ appear in a nonlinear way. Examples of nonlinearities for which local monotonicity fails are
\begin{itemize}
\item the Cahn--Hilliard nonlinearity $\Delta( f(u))$, where $f(y) =  \partial_y [\frac14(1-y^2)^2]$;
\item the (tamed) Navier-Stokes nonlinearity $(u\cdot \nabla) u$ in three dimensions;
\item the Allen-Cahn nonlinearity $u - u^3$ in two or three dimensions;
\item systems of SPDEs with nonlinearities such as $\pm u_1 u_2$.
\end{itemize}
For the (tamed) Navier-Stokes and Allen-Cahn equations the strong setting $V = H^2$ and $H = H^1$ is required, for which local monotonicity does not hold.  Each of the above cases contains a product term, which does not satisfy local monotonicity.

The local monotonicity conditions is further weakened in \cite[Section 5.2]{LR15}, where also some of the above examples are considered. However, therein global well-posedness is only obtained for equations with {\em additive noise}. In what follows, we introduce a new setting in which we can remove such restrictions.

Finally, we would like to mention that of course not all SPDEs can be studied with the variational method, due to the lack of coercivity. Equations for which \eqref{eq:coercivity_standard_intro} does not hold include the stochastic primitive equations \cite{agrestiHHS} and certain systems of reaction-diffusion equations \cite{AVreaction-global}.

\subsection{The critical variational setting}
In this paper we introduce a new variational setting which we call {\em the critical variational setting}. We show that under mild structural restrictions on $(A,B)$, one can replace the local monotonicity condition by a local Lipschitz condition, which does hold for the above mentioned examples. The Lipschitz constants can have arbitrary dependence on $\|u\|_H$ and $\|v\|_H$, and moreover they can grow polynomially in $\|u\|_{V_\beta}$ and $\|v\|_{V_\beta}$, where $V_{\beta} = [V^*, V]_{\beta}$ is the complex interpolation space and $\beta<1$.
A restriction on the nonlinearities $(A,B)$ is that they are of quasi-linear type:
\begin{align}\label{eq:ABstruc}
A(t,v) = A_0(t,v)v-F(t,v)  \quad \text{ and } \quad B(t,v) = B_0(t,v)v+G(t,v), \ \
\end{align}
where $v\in V$. The parts $(A_0,B_0)$ and $(F,G)$ are the quasi-linear and semi-linear parts of $(A,B)$, respectively. Of course many of the classical SPDEs fit into this setting, and actually in many important cases $A_0$ and $B_0$ are linear differential operators, which requires $\alpha=2$ in the coercivity condition \eqref{eq:coercivity_standard_intro}.

To give an idea what to expect we give a special case of the local Lipschitz condition on $F$ and $G$ we will be assuming: for all $n\geq 1$ there exists a $C_n$ such that, for all $u,v\in V$ with $\|u\|_H\leq n$ and $\|v\|_H\leq n$,
\begin{align*}
\|F(t,u) - F(t,v)\|_{V^*}&\leq C_n (1+\|u\|^{\rho}_{V_{\beta}}+\|v\|^{\rho}_{V_{\beta}})\|u-v\|_{V_{\beta}},
\\ \|G(t,u) - G(t,v)\|_{\calL_2(U,H)}&\leq C_n (1+\|u\|^{\rho}_{V_{\beta}}+\|v\|^{\rho}_{V_{\beta}})\|u-v\|_{V_{\beta}}.
\end{align*}
Here $\beta\in (0,1)$ and $\rho\geq 0$ satisfy $2\beta\leq 1+\frac{1}{\rho+1}$, together with an associated growth condition involving the same parameters $\beta$ and $\rho$. In case $2\beta= 1+\frac{1}{\rho+1}$ the nonlinearity is called {\em critical}. It is central in the theory that this case is included as well, as often it has the right scaling properties for SPDEs. In examples, the Sobolev embedding usually dictates which $\beta$ one can use. Then the corresponding $\rho$, gives the growth of the Lipschitz constant which one can allow.

Another feature of our theory is that we can also weaken the linear growth condition on the $B$-term in front of the noise term (i.e. \eqref{eq:growthB} in case $\alpha=2$). In principle we can have arbitrary growth, but in order to ensure global well-posedness, the coercivity condition usually leads to some natural restrictions on the growth order of $B$.

Finally, we mention that the classical variational setting also covers several degenerate nonlinear operators $A$ such as the $p$-Laplace operator and the porous media operator (see \cite{LR15}). At the moment we cannot treat these cases since \cite{AV19_QSEE_1, AV19_QSEE_2} requires a strongly elliptic/parabolic quasi-linear structure. It would be interesting to see whether the latter can be adjusted to include these cases as well. This could lead to a complete extension of the variational setting where the local monotonicity condition is replaced by local Lipschitz conditions. After the first version of the current paper was uploaded to the arXiv, the manuscript \cite{rockner2022well} appeared on the arXiv. In the latter it was shown that the classical variational framework can also be extended to a fully nonlinear framework under very general local monotonicity conditions in the special case the embedding $V\hookrightarrow H$ is compact, by applying Yamada-Watanabe theory. Of course the approaches and conditions are completely different, but there is some overlap in the potential applications.

\subsection{Main results}
In the paper we prove well-posedness for \eqref{eq:SEEintro} in the critical variational setting, i.e. assuming \eqref{eq:ABstruc}, a coercivity condition, and a local Lipschitz condition. The main results for the abstract stochastic evolution equation \eqref{eq:SEEintro} are
\begin{itemize}
\item global existence and uniqueness (Theorems \ref{thm:globalclas} and \ref{thm:globaleta0});
\item continuous dependency on the initial data (Theorem \ref{thm:contdepdata}).
\end{itemize}
The abstract results are applied to the following examples:
\begin{itemize}
\item stochastic Cahn--Hilliard equation (Section \ref{ss:CH});
\item stochastic tamed Navier-Stokes equations (Section \ref{ss:TSNS});
\item generalized Burgers equations (Section \ref{ss:second});
\item Allen-Cahn equation (Section \ref{ss:AllenCahn});
\item a quasi-linear equations of second order (Section \ref{ss:quasi});
\item stochastic Swift-Hohenberg equation (Section \ref{ss:SH}).
\end{itemize}
Some of these equations have been considered in the literature before using different methods. However, the classical variational framework was not applicable. In any case the proofs we present are new and lead to new insights. Our new variational setting avoids
local monotonicity and boundedness, and thus leads to less restrictions. Many more applications could be considered, but to keep the paper at a reasonable length, we have to limit the number of applications. We kindly invite the reader to see what the new theory brings for his/her favorite SPDEs.

Our method of proof differs from the classical variational setting which is based on Galerkin approximation. Local well-posedness is an immediate consequence of our paper \cite{AV19_QSEE_1}, which is the stochastic analogue of the theory of critical evolution equations recently obtained in \cite{PrussWeight2, CriticalQuasilinear, addendum}. In our previous work \cite{AV19_QSEE_2} we provided several sufficient conditions for global well-posedness in terms of blow-up criteria. In the current paper we present a consequence of one of these blow-up criteria, and we check it via the coercivity condition and It\^{o}'s formula. This leads to global well-posedness for the variational setting considered here. Continuous dependence on the initial data turns out to be a delicate issue, and is proved via maximal regularity techniques combined with a local Gronwall lemma (see Appendix \ref{app:gronwall}).

\subsection{Embedding into $L^p(L^q)$-theory}

In follow-up papers we will also present other examples which require $L^p(L^q)$-theory ($L^p$ in time and $L^q$ in space). Choosing $p$ and $q$ large, Sobolev embedding results become better, and more nonlinearities can be included. In many cases the $L^2(L^2)$-setting can be transferred to a $L^p(L^q)$-setting using the new bootstrap methods of \cite[Section 6]{AV19_QSEE_2}. Often this also leads to global well-posedness in an $L^p(L^q)$-setting.

An example where the latter program is worked out in full detail can be found in \cite{AV20_NS}. Here we proved higher order regularity of local solutions to the stochastic Navier--Stokes equations on the $d$-dimensional torus. In the special case of $d=2$, these results hold globally and are proved using such a transference and bootstrapping argument, by going from an $L^2(L^2)$-setting to an $L^p(L^q)$-setting.

There is a price to pay in $L^p(L^q)$-theory. It is usually much more involved, and moreover, the classical coercivity conditions are often not enough or not even known. Exceptions where coercivity conditions in $L^p(L^q)$ are well-understood are:
\begin{itemize}
\item second order equations on $\R^d$ \cite{Kry, VP18};
\item second order equations on the torus $\T^d$ \cite{AV21_SMR_torus};
\item weighted spaces on domains \cite{Kim04b,Kim05}.
\end{itemize}
It is of major importance to further extend the latter linear theory as it is the key in the stochastic maximal regularity (SMR) approach to SPDEs obtained in \cite{AV19_QSEE_1, AV19_QSEE_2}. In future papers we expect that the variational setting considered in the current paper will play a crucial role in establishing global $L^p(L^q)$-theory, which in turn also leads to higher order regularity of the solution.

\subsection*{Notation}

For a metric space $(M,d)$, $\mathcal{B}(M)$ denotes the Borel $\sigma$-algebra.

For a measure space $(S,\mathcal{A},\mu)$, $L^0(S;X)$ denotes the space of strongly measurable functions from $S$ into a normed space $X$. Here and below we identify a.e.\ equal functions. For details on strong measurability the reader is referred to \cite{Analysis1}. For $p\in (0,\infty)$ let $L^p(S;X)$ denote the subset of $L^0(S;X)$ for which
\[\|f\|_{L^p(S;X)} :=\Big(\int_S \|f(s)\|^p \,\dd \mu(s)\Big)^{1/p}<\infty.\]
Finally, let $L^\infty(S;X)$ denote the space of functions for which \[\|f\|_{\infty} = \|f\|_{L^\infty(S;X)} = \esssup_{s\in S} \|f(s)\|_X<\infty.\]
In case $\mathcal{A}_0$ is a sub-$\sigma$-algebra of $\mathcal{A}$, $L^p_{\mathcal{A}_0}(S;X)$ denotes the subspace of strongly $\mathcal{A}_0$-measurable functions in $L^p(S;X)$.

Let $\R_+ = [0,\infty)$. For an interval $I\subseteq [0,\infty]$ we write $C(I;X)$ for the continuous functions from $I\cap \R_+$ into $X$. The space $C_b(I;X)$ is the subspace of $C(I;X)$ of functions such that $\|f\|_{\infty} = \|f\|_{C_b(I;X)} = \sup_{t\in I} |f(t)|<\infty$.

We write $L^p_{\rm loc}(I;X)$ for the space of functions $f:I\to X$ such that $f|_{J}\in L^p(J;X)$ for all compact intervals $J\subseteq I$.

For a Hilbert spaces $U$ and $H$,  let $\calL(U,H)$ denote the bounded linear operators from $U$ into $H$, and $\calL_2(U,H)$ the Hilbert-Schmidt operators from $U$ into $H$.

Given Hilbert spaces $V_0$ and $V_1$, $V_{\beta} = [V_{0},V_1]_{\beta}$ denotes complex interpolation at $\beta\in (0,1)$. For $x\in V_{\beta}$ and $R\in \calL_2(U,V_{\beta})$, we write
\begin{equation*}
\|x\|_{\beta} = \|x\|_{V_{\beta}}, \qquad \text{and} \qquad \nn R\nn_{\beta} = \|R\|_{\calL_2(U,V_{\beta})}.
\end{equation*}
However, in case $\beta\in \{0,1/2, 1\}$ we sometimes write $\|x\|_{V^*}$, $\|x\|_{H}$, $\|x\|_V$, $\nn R\nn_{V^*}$, $\nn R\nn_{H}$ and $\nn R\nn_{V}$ instead.

By the notation $A\lesssim_{p} B$ we mean that there is a constant $C$ depending only on $p$ such that $A \leq C B$.

\subsubsection*{Acknowledgements}

The authors thank Udo B\"ohm, Balint Negyesi, Esm\'ee Theewis and the anonymous referees for helpful comments. We thank Sarah Geiss for discussions on stochastic Gronwall inequalities.

\subsection*{Data sharing and conflict of interest statement}
Data sharing not applicable to this article as no datasets were generated or analysed during the current study.

The authors have no competing interests to declare that are relevant to the content of this article.

\section{Preliminaries}

\subsection{Variational setting}\label{ss:variationalsetting}
To start with we recall the variational setting for evolution equations. For unexplained details on real and complex interpolation and connections to Hilbert space methods the reader is referred to \cite[Section 5.5.2]{ArendtHandbook}, \cite{CHM15}, and \cite[Section IV.10]{KPS}. The
real interpolation method is denoted by $(\cdot, \cdot)_{\theta,2}$ and the complex interpolation method by $[\cdot, \cdot]_{\theta}$ for $\theta\in (0,1)$. When interpolating Hilbert spaces, the latter two methods coincide (see \cite[Corollary C.4.2]{Analysis1}).

In the variational setting for evolution equations one starts with two real Hilbert spaces $(V, (\cdot,\cdot)_V)$ and $(H, (\cdot,\cdot)_H)$ such that $V\hookrightarrow H$, where the embedding is dense and continuous. Identifying $H$ with its dual we can define an imbedding $j:H\to V^*$ by setting $\langle j h,v \rangle = (v, h)_H$ for $v\in V$ and $h\in H$. In this way we may view $H$ as a subspace of $V^*$, and one can show that the embedding $H\hookrightarrow V^*$ is dense and continuous. Moreover, one can check that $(V^*, V)_{\frac12,2} = [V^*, V]_{\frac12} = H$ (see \cite[Section 5.5.2]{ArendtHandbook}).

A converse to the above construction can also be done. Given Hilbert spaces $V_0$ and $V_1$ such that $V_1\hookrightarrow V_0$, where the embedding is dense and continuous, define another Hilbert space $H$ by setting $H = (V_0,V_1)_{\frac12,2} = [V_0, V_1]_{\frac12}$. Let $(\cdot, \cdot)_H$ denote the inner product on $H$. Then the embeddings $V_1\hookrightarrow H\hookrightarrow V_0$ are dense and continuous. Identifying $H$ with its dual, one can check that $V_1^* = V_0$ isomorphically.

Next we collect two basic examples of the above construction. Below $H^{s}(\Dom)$ denotes the Bessel potential spaces  of order $s\in \R$ on an open set $\Dom\subseteq \R^d$. Moreover,
$$
H^{s}_0(\Dom):=\overline{C^{\infty}_c(\Dom)}^{H^{s}(\Dom)}.
$$

\begin{example}[Weak setting]\label{ex:weaksetting}
Let $\Dom\subseteq \R^d$ be open.
In case of second order PDEs with Dirichlet boundary conditions (cf.\ Subsection \ref{ss:second}), the weak setting is obtained by letting $H = L^2(\Dom)$, $V = H^1_0(\Dom)$ and $V^*=H^{-1}(\Dom):=H^1_0(\Dom)^*$. In case the domain is $C^1$ the celebrated result of Seeley \cite{Se} yields
$$
[H,V]_{\theta}=
\left\{
\begin{aligned}
&H_0^{ \theta}(\Dom) \ \ \  & \text{ if }&  \theta\in (\tfrac{1}{2},1),\\
& H^{\theta}(\Dom) \ \ \  &\text{ if }& \theta\in (0,\tfrac{1}{2}).
 \end{aligned}\right.
$$
The case $\theta=\frac12$ is more complicated to describe and not needed below. Without any regularity conditions on $\Dom$, due to the boundedness of the zero-extension operator, one always has $[H,V]_{\theta}\hookrightarrow H^{\theta}(\Dom)$.
\end{example}

\begin{example}[Strong setting]
\label{ex:strong_setting}
Let $\Dom$ be a bounded $C^2$-domain.
In the case of second order PDEs with Dirichlet boundary conditions, the strong setting obtained by letting $H = H^1_0(\Dom)$, $V = H^2(\Dom)\cap H^1_0(\Dom)$ and $V^*=L^2(\Dom)$. To show that $V^*=L^2(\Dom)$ it is suffices to note that $H\embed V^*$ is dense, that one has unique solvability in $V$ of the elliptic problem:
\begin{equation*}
\Delta u= \varphi \in L^2(\Dom), \quad \text{ and } \quad u|_{\partial\Dom} =0,
\end{equation*}
and that $\langle\cdot,\cdot\rangle :V^*\times V\to \R$ satisfies $\langle u,v\rangle =\int_{\Dom}\nabla u\cdot\nabla v \,\dd x$ for all $u,v\in H$.
As in the previous example, \cite{Se} implies that
$$
[H,V]_{\theta}=
\left\{
\begin{aligned}
&H^{1+\theta}(\Dom)\cap H_0^{1}(\Dom) \ \ \  & \text{ if }& \theta\in (\tfrac{1}{2},1),\\
& H_0^{1+\theta}(\Dom) \ \ \  &\text{ if }&  \theta\in (0,\tfrac{1}{2}).
 \end{aligned}\right.
$$
As before the case $\theta = \frac12$ is more complicated to describe.
\end{example}

Note that the choice $V = H^2_0(\Dom)$  in Example \ref{ex:strong_setting}  would lead to a larger space for $V^*$, and would also lead to Dirichlet and Neumann boundary conditions at the same time, which is unnatural in many examples, and actually leads to problems.

Finally, if in above examples $\Dom$ is replaced by either $\R^d$ or the periodic torus $\Tor^d$, then the above examples extend verbatim by noticing that $H^{s}_0(\Dom)=H^{s}(\Dom)$ if $\Dom\in \{\Tor^d,\R^d\}$.

Examples \ref{ex:weaksetting} and \ref{ex:strong_setting} can also be extended to the case of Neumann boundary conditions. In the weak setting this leads to $H = H^1(\Dom)$ and $V = L^2(\Dom)$. In the strong setting this leads to $V = \{u\in H^2(\Dom):\partial_n u|_{\partial\Dom} = 0\}$ and $H = H^1(\Dom)$.
Extensions to higher order operators can be considered as well.

\subsection{Stochastic calculus}
\label{ss:stoch_calculus_notation}
For details on stochastic integration in Hilbert spaces the reader is referred to \cite{DPZ,LR15,Rozov}, and for details on measurability and stopping times to \cite{Kal}. For completeness we introduce the notation we will use.

Let $H$ be a Hilbert space. Let $(\Omega, \mathcal{F}, \P)$ be a probability space with filtration $(\F_t)_{t\geq 0}$. An $H$-valued {\em random variable} $\xi$ is a strongly measurable mapping $\xi:\Omega\to H$.

A {\em process} $\Phi:\R_+\times\Omega\to H$ is a strongly measurable function. It is said to be {\em strongly progressively measurable} if for every $t\geq 0$, $\Phi|_{[0,t]\times\Omega}$ is strongly $\mathcal{B}([0,t])\otimes \F_t$-measurable. The $\sigma$-algebra generated by the strongly progressively measurable processes is denoted by $\Progress$. For a stopping time $\tau$ we set $[0,\tau)\times \O:=\{(t,\om)\,:\, 0\leq t<\tau(\om)\}$. Similar definitions hold for $[0,\tau]\times \O$, $(0,\tau]\times \O$ etc. If $\tau$ is a stopping time, then $\Phi:[0,\tau)\times\Omega\to H$ is called strongly progressively measurable if the extension to $\R_+\times\Omega$ by zero is strongly progressively measurable.

For a real separable Hilbert space $U$, let $(W(t))_{t\geq 0}$ be a \emph{$U$-cylindrical Brownian motion} with respect to $(\F_t)_{t\geq 0}$, i.e.\ $W\in \calL(L^2(\R_+;U),L^2(\Omega))$ is such that for all $t\in (0,\infty)$ and $f,g\in L^2(\R_+;U)$ one has
\begin{enumerate}[(a)]
\item $Wf$ has a normal distribution with mean zero and $\E(Wf \, Wg) = (f, g)_{L^2(\R_+;U)}$;
\item $Wf$ is $\F_t$ measurable if $\supp(f)\subseteq [0,t]$;
\item $Wf $ is independent of $\F_t$ if $\supp(f)\subseteq [t,\infty)$.
\end{enumerate}

If $\Phi:\R_+\times\Omega\to \calL_2(U,H)$ is strongly progressively measurable and $\Phi\in L^2_{\loc}(\R_+;\calL_2(U,H))$ a.s., then one can define the stochastic integral process $\int_{0}^\cdot \Phi(s)\,\dd W(s)$, which has a version with continuous paths a.s., and for each interval $J\subseteq [0,\infty)$ the following two-sided estimate holds
\begin{align*}
C_p^{-1} \E\|\Phi\|_{L^2(J;\calL_2(U,H))}^p \leq \E \sup_{t\in J}\Big\|\int_{J\cap [0,t]} \Phi(s) \,\dd W(s)\Big\|_H^p \leq C_p \E\|\Phi\|_{L^2(J;\calL_2(U,H))}^p,
\end{align*}
where $p\in (0,\infty)$ and $C_p>0$ only depends on $p$. The latter will be referred to as the {\em Burkholder-Davis-Gundy inequality}.

\section{Main results}

As in Section \ref{ss:variationalsetting} let $V_0$, $V_1$ and $H$ be Hilbert spaces such that $V_1\hookrightarrow V_0$ and $H = [V_0, V_1]_{1/2}$. Moreover, we set $V_{\theta} = [V_0, V_1]_{\theta}$ for $\theta\in [0,1]$.
By reiteration for the real or complex interpolation method (see \cite[p.\ 50 and 101]{BeLo}) one also has that
\begin{align}\label{eq:reiteration}
V_{\frac{1+\theta}{2}} = [H,V_1]_{\theta} = (H, V_1)_{\theta,2}.
\end{align}
The following short-hand notation will be used frequently: For $x\in V_{\beta}$ and $R\in \calL_2(U,V_{\beta})$,
\[\|x\|_{\beta} = \|x\|_{V_{\beta}} \qquad \text{and} \qquad \nn R\nn_{\beta} = \|R\|_{\calL_2(U,V_{\beta})},\]
where $\beta\in (0,1)$ and $U$ is the Hilbert space for the cylindrical Brownian motion $W$. However, in case $\beta\in \{0,1/2, 1\}$ we prefer to write $\|x\|_{V^*}, \|x\|_{H}$, $\|x\|_V$, $\nn R\nn_{V^*}$, $\nn R\nn_{H}$ and $\nn R\nn_{V}$ instead. The following interpolation estimate will be frequently used for $\beta\in (1/2, 1)$,
\begin{align}\label{eq:interpolest}
\|u\|_{\beta}\leq C\|u\|_{H}^{2-2\beta} \|u\|_{V}^{2\beta-1}, \quad u\in V.
\end{align}

As explained in Section \ref{ss:variationalsetting} we will also write $V = V_1$ and $V^* = V_0$, where the duality relation is given by
\[\langle  h,v\rangle = (v,h)_H, \ \ \  h\in H\text{ and }v\in V,\]
and extended to a mapping $\langle \cdot, \cdot\rangle:V^*\times V\to \R$ by continuity.

\subsection{Setting}
Let $W$ be a $U$-cylindrical Brownian motion (see Subsection \ref{ss:stoch_calculus_notation}).
Consider the following quasi-linear stochastic evolution equation:
\begin{equation}
\label{eq:SEE}
\left\{
       \begin{aligned}
        \dd u(t) + A(t,u(t))\, \dd t  & = B(t,u(t)) \, \dd W(t),
\\ u(0) &= u_0.
             \end{aligned}
           \right.
\end{equation}

The following conditions will ensure local existence and uniqueness.

\begin{assumption}\label{ass:condFG}\
Suppose that the following conditions hold:
\begin{enumerate}[{\rm(1)}]
\item\label{it:condFG0} $A(t,v) = A_0(t,v)v-F(t,v)-f$ and $B(t,v) = B_0(t,v)v+G(t,v)+g$, where
\[A_0:\R_+\times \Omega\times H\to \mathcal{L}(V, V^*) \ \  \text{and} \ \ B_0:\R_+\times \Omega\times H\to \mathcal{L}(V, \mathcal{L}_2(U, H))\]
are $\Progress\otimes\mathcal{B}(H)$-measurable, and
\[F:\R_+\times \Omega\times V\to V^* \ \ \text{and} \ \ G:\R_+\times \Omega\times V\to \calL_2(U,H) \]
are $\Progress\otimes\mathcal{B}(V)$-measurable, and $f: \R_+ \times \O\to V^*$ and $g:\R_+\times \O\to \calL_2(U,H)$ are $\Progress$-measurable maps such that a.s.\
\begin{align*}
 f\in L^2_{\rm loc}([0,\infty);V^*)\ \  \text{ and }\ \     g\in L^2_{\rm loc}([0,\infty);\calL_2(U,H)).
\end{align*}
\item\label{it:condFG1} For all $ T>0$ and $ n\geq 1$, there exist $ \theta_n>0$ and $ M_n>0$ such that\ a.s.
\begin{align*}
 \langle A_0(t,v)u,u \rangle - \frac12\nn B_0(t,v)u\nn_{H}^2 \geq \theta_n\|u\|_{V}^2-M_n\|u\|_{H}^2 ,
\end{align*}
where $t\in [0,T], u\in V$, and $v\in H$ satisfies $\|v\|_{H}\leq n$.
\item\label{it:condFG2}
Let $\rho_j\geq 0$ and $\beta_j\in (1/2,1)$ be such that
\begin{equation}\label{eq:condbetarho}
2\beta_j\leq 1+\frac{1}{\rho_j+1}, \ \ \ \ j\in \{1, \ldots, m_F+m_G\},
\end{equation}
where $m_F, m_G\in \N$, and suppose that
$\forall n\geq 1$ $\forall T>0$ there exists a constant $C_{n,T}$ such that, a.s.\ for all $t\in [0,T]$ and $u,v,w\in V$ satisfying $\|u\|_{H},\|v\|_{H}\leq n$,
\begin{align*}
 \|A_0(t,u) w\|_{V^*}+\nn B_0(t,u) w\nn_{H}&\leq C_{n,T}\|w\|_{V},
\\ \|A_0(t,u)w - A_0(t,v)w\|_{V^*}&\leq C_{n,T} \|u-v\|_{H}\|w\|_{V},
\\ \nn B_0(t,u)w - B_0(t,v) w\nn_{H}&\leq C_{n,T} \|u-v\|_{H}\|w\|_{V},
\\ \|F(t,u) - F(t,v)\|_{V^*}&\leq C_{n,T} \sum_{j=1}^{m_F} (1+\|u\|^{\rho_j}_{\beta_j}+\|v\|^{\rho_j}_{\beta_j})\|u-v\|_{\beta_j},
\\ \|F(t,u)\|_{V^*}&\leq C_{n,T} \sum_{j=1}^{m_F} (1+\|u\|^{\rho_j+1}_{\beta_j}),
\\ \nn G(t,u) - G(t,v)\nn_{H}&\leq C_{n,T} \sum_{j=m_F+1}^{m_F+m_G} (1+\|u\|^{\rho_j}_{\beta_j}+\|v\|^{\rho_j}_{\beta_j})\|u-v\|_{\beta_j},
\\ \nn G(t,u)\nn_{H}&\leq C_{n,T} \sum_{j=m_F+1}^{m_F+m_G}(1+\|u\|^{\rho_j+1}_{\beta_j}).
\end{align*}
\end{enumerate}
\end{assumption}
Here \eqref{it:condFG1} means that the linear part $(A_0(\cdot,v), B_0(\cdot,v))$ is a usual coercive pair with locally uniform estimates for $\|v\|_H\leq n$. Moreover, the quasi-linearities $x\mapsto A_0(\cdot, x)$ and $x\mapsto B_0(\cdot,x)$ are locally Lipschitz. In the semi-linear case, i.e. when $A_0$ and $B_0$ do not depend on $x$, the conditions on $A_0$ and $B_0$ in \eqref{it:condFG2} become trivial.

The nonlinearities $F$ and $G$ satisfy a local Lipschitz and growth estimate, which contains several tuning parameters $\beta_j$ and $\rho_j$ such that \eqref{eq:condbetarho} holds, or equivalently $(2\beta_j - 1)(\rho_j+1)\leq 1$. Usually $m_F = m_G=1$, so that the sums on the left-hand side disappear. The case of equality in \eqref{eq:condbetarho} for some $j$ leads to so-called {\em criticality} of the corresponding nonlinearity. The growth in the $H$-norm can be arbitrary large and this is contained in the constant $C_{n,T}$. These types of conditions have been introduced for deterministic evolution equation in \cite{PrussWeight2, CriticalQuasilinear, addendum}, and in the stochastic framework in \cite{AV19_QSEE_1, AV19_QSEE_2} in a Banach space setting.

\begin{definition}[Solution]\label{def:solution}
\label{def:defsol}
Let Assumption \ref{ass:condFG} be satisfied and
let $\sigma$ be a stopping time with values in $[0,\infty]$. Let $u:[0,\sigma)\times \O\to V$ be a strongly progressively measurable process.
\begin{itemize}
\item $u$ is called a \emph{strong solution} to \eqref{eq:SEE} (on $[0,\sigma]\times \O$) if a.s.\ $u\in L^2(0,\sigma;V)\cap C([0,\sigma];H)$,
\begin{align*}
F(\cdot,u)\in L^2(0,\sigma;V^*), \quad G(\cdot,u)\in L^2(0,\sigma;\calL_2(U,H)),&
\end{align*}
and the following identity holds a.s.\ and for all $t\in[0,\sigma)$,
\begin{equation}
\label{eq:defsol}
u(t)-u_0 +\int_{0}^{t} A(s,u(s))\, \dd s= \int_{0}^t\one_{[ 0,\sigma)\times \O} B(s,u(s))\, \dd W(s).
\end{equation}
\item $(u,\sigma)$ is called a {\em local solution} to \eqref{eq:SEE}, if there exists an increasing sequence $(\sigma_n)_{n\geq 1}$ of stopping times such that $\lim_{n\uparrow \infty} \sigma_n =\sigma$ a.s.\ and $u|_{[ 0,\sigma_n]\times \O}$ is a strong solution to \eqref{eq:SEE} on $[ 0,\sigma_n]\times \O$. In this case, $(\sigma_n)_{n\geq 1}$ is called a {\em localizing sequence} for $(u,\sigma)$.
\item A local solution $(u,\sigma)$ to \eqref{eq:SEE} is called {\em unique}, if for every local solution $(u',\sigma')$ to \eqref{eq:SEE} for a.a.\ $\om\in \Omega$ and for all $t\in [0,\sigma(\om)\wedge \sigma'(\omega))$ one has $u(t,\omega)=u'(t,\omega)$.
\item A unique local solution $(u,\sigma)$ to \eqref{eq:SEE} is called a {\em maximal (unique) solution}, if for any other local solution $(u',\sigma')$ to \eqref{eq:SEE}, we have a.s.\ $\sigma'\leq \sigma$ and  for a.a.\ $\om \in \Omega$ and all $t\in [0,\sigma'(\om))$, $u(t,\omega)=u'(t,\omega)$.
\item A maximal (unique) local solution $(u,\sigma)$ is called a {\em global (unique) solution} if $\sigma=\infty$ a.s. In this case we write $u$ instead of $(u,\sigma)$.
\end{itemize}
\end{definition}

Note that, if $(u,\sigma)$ is a local solution to \eqref{eq:SEE} on $[0,\sigma]\times \O$, then $u\in L^2(0,\sigma;V)\cap C([0,\sigma];H)$ a.s.\ and by Assumption \ref{ass:condFG}
\begin{align}\label{eq:FGint}
A_0(\cdot,u)u\in L^2(0,\sigma;V^*)\text{ a.s.\ }\ \text{ and }\
B_0(\cdot,u)u\in L^2(0,\sigma;\calL_2(U,H))\text{ a.s.\ }
\end{align}
In particular, the integrals appearing in \eqref{eq:defsol} are well-defined.

Maximal local solutions are always unique in the class of local solutions. This seems to be a stronger requirement compared to \cite[Definition 4.3]{AV19_QSEE_1}. However, due to the coercivity condition on the leading operator $(A_0,B_0)$, i.e.\ Assumption \ref{ass:condFG}\eqref{it:condFG1}, stochastic maximal $L^2$-regularity holds by Lemma \ref{lem:SMR} below. Thus \cite[Remark 5.6]{AV19_QSEE_2} shows that Definition \ref{def:defsol} coincide with the one in \cite[Definition 4.3]{AV19_QSEE_1}.

Under Assumption \ref{ass:condFG} the following local existence and uniqueness result holds, which will be proved in Section \ref{sec:proofs}.
\begin{theorem}[Local existence, uniqueness and blow-up criterion]\label{thm:local}
Suppose that Assumption \ref{ass:condFG} holds. Then for every $u_0\in L^0_{\F_0}(\Omega;H)$, there exists a (unique) maximal solution $(u,\sigma)$ to \eqref{eq:SEE} such that a.s.\ $u\in C([0,\sigma);H)\cap L^2_{\rm loc}([0,\sigma);V)$.
Moreover, the following blow-up criteria holds
\begin{align}\label{eq:blowupcrit}
\P\Big(\sigma<\infty, \sup_{t\in [0,\sigma)} \|u(t)\|^2_{H} + \int_{0}^{\sigma} \|u(t)\|_V^2\, \dd t<\infty \Big) = 0.
\end{align}
\end{theorem}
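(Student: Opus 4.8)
The plan is to reduce Theorem~\ref{thm:local} to the abstract local well-posedness and blow-up theory developed in \cite{AV19_QSEE_1, AV19_QSEE_2}. The key observation is that Assumption~\ref{ass:condFG} is precisely a Hilbert-space instantiation of the structural hypotheses used there, so the main work is to verify the abstract conditions, not to redo the fixed-point argument. Concretely, I would proceed as follows.

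\textbf{Step 1: Stochastic maximal $L^2$-regularity of the leading pair.} First I would invoke Lemma~\ref{lem:SMR} (the coercivity condition \eqref{it:condFG1} for $(A_0(\cdot,v),B_0(\cdot,v))$ with locally uniform constants for $\|v\|_H\le n$) to conclude that, for frozen coefficients, the linear problem $du + A_0(t,v)u\,dt = (B_0(t,v)u + h)\,dW$ enjoys stochastic maximal $L^2$-regularity on the trace space $H = [V_0,V_1]_{1/2}$. This is the input ``$(H_p)$''-type hypothesis of \cite{AV19_QSEE_1}: the quasi-linear coefficients $(A_0,B_0)$ are locally Lipschitz in $H$ by the first four estimates in \eqref{it:condFG2}, and $x\mapsto(A_0(\cdot,x),B_0(\cdot,x))$ maps the trace space $H$ into the space of operators for which SMR holds, with the SMR constant locally bounded in $\|x\|_H\le n$. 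One subtle point is that the coercivity constant $\theta_n$ and the SMR space depend on the ball radius $n$; this is handled by the standard localization/stopping-time device in \cite{AV19_QSEE_1}.

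\textbf{Step 2: Checking the nonlinearity conditions.} Next I would verify that $F$ and $G$ satisfy the critical local-Lipschitz-plus-growth assumptions of \cite{AV19_QSEE_1} with respect to the interpolation scale $V_{\beta_j} = [V_0,V_1]_{\beta_j}$. The condition \eqref{eq:condbetarho}, i.e. $(2\beta_j-1)(\rho_j+1)\le 1$, is exactly the criticality/subcriticality relation between the interpolation exponent and the polynomial growth of the Lipschitz constant; by the reiteration identity \eqref{eq:reiteration} one has $V_{\beta_j} = (H,V_1)_{2\beta_j-1,2}$, which identifies these spaces as genuine trace-type intermediate spaces between the base space $H$ and the regularity space $V$, so the abstract framework applies verbatim. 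Using \eqref{eq:interpolest} together with Young's inequality, the growth of $F,G$ in the $V_{\beta_j}$-norm is absorbed, up to lower-order terms with $H$-dependent constants $C_{n,T}$, which is precisely the structure the abstract theory tolerates. Here I would also note that the affine inhomogeneities $f\in L^1_{\rm loc}(V^*)$ and $g\in L^2_{\rm loc}(\calL_2(U,H))$ fit as data of the required integrability.

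\textbf{Step 3: Existence, uniqueness, and the blow-up criterion.} With Steps 1--2 in place, \cite[Theorem 4.5 (or the relevant local existence theorem)]{AV19_QSEE_1} yields a unique maximal local solution $(u,\sigma)$ with paths in $C([0,\sigma);H)\cap L^2_{\rm loc}([0,\sigma);V)$ a.s. For the blow-up criterion \eqref{eq:blowupcrit} I would apply the blow-up characterization of \cite{AV19_QSEE_2}: since SMR holds on $(V^*,V)$ with trace space $H$ and the natural solution space is $L^2(V)\cap C(H)$, the general principle says that on $\{\sigma<\infty\}$ the solution must leave every bounded set of this path space, i.e. $\sup_{[0,\sigma)}\|u\|_H^2 + \int_0^\sigma\|u\|_V^2 = \infty$ a.s. on $\{\sigma<\infty\}$; I would also remark (as the text does before the statement) that \cite[Remark 5.6]{AV19_QSEE_2} reconciles the present notion of maximality-in-the-class-of-local-solutions with the one in \cite{AV19_QSEE_1}, which is what licenses stating \eqref{eq:blowupcrit} in this strong form.

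The main obstacle I expect is bookkeeping rather than conceptual: the abstract results in \cite{AV19_QSEE_1, AV19_QSEE_2} are phrased in a Banach-space setting with weighted-in-time $L^p$ spaces and a possibly different trace space, so the real task is to check that the specializations $p=2$, weight $\alpha=0$, $V_\beta = [V_0,V_1]_\beta$, trace space $H=[V_0,V_1]_{1/2}$, and coercivity in place of an a priori SMR hypothesis are all admissible inputs — in particular that the coercivity estimate \eqref{it:condFG1} genuinely implies SMR on the scale $(V^*,V)$ (this is Lemma~\ref{lem:SMR}) and that the criticality relation \eqref{eq:condbetarho} matches the ``sub/critical'' exponent condition of the abstract framework. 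Once those identifications are made, the proof is essentially a citation.
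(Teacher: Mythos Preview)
Your overall plan is correct and matches the paper's approach: both parts reduce to \cite{AV19_QSEE_1,AV19_QSEE_2} after verifying that Assumption~\ref{ass:condFG} instantiates the abstract hypotheses with $p=2$, weight $\kappa=0$, trace space $H$, and SMR for $(A_0(\cdot,u_{0,n}),B_0(\cdot,u_{0,n}))$ supplied by Lemma~\ref{lem:SMR}. For the local existence part this is exactly what the paper does (invoking \cite[Theorem 4.7]{AV19_QSEE_1}, not 4.5, and then patching the finite-horizon solutions together via \cite[Section 4.3]{AV19_QSEE_2}).

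There is, however, one genuine step you are glossing over in the blow-up part. The abstract criterion actually invoked, \cite[Theorem 4.9(3)]{AV19_QSEE_2}, yields $\P(\{\sigma<T\}\cap\mathcal{W}_{\lim})=0$, where $\mathcal{W}_{\lim}$ is the event that $\lim_{t\uparrow\sigma}u(t)$ \emph{exists} in $H$ and $u\in L^2(0,\sigma;V)$; it does not directly give the statement \eqref{eq:blowupcrit} with $\sup_{t<\sigma}\|u(t)\|_H<\infty$ in place of existence of the limit. Boundedness in $H$ along a continuous path does not by itself force the limit to exist, so this is not pure bookkeeping. The paper closes this gap by a short but essential argument: introduce stopping times $\sigma_n$ along which $\|u-u_0\|_H$ and $\int_0^{\cdot}\|u\|_V^2$ stay bounded, freeze the quasi-linear coefficients along the stopped solution to obtain a linear pair $(\wt{A},\wt{B})$ satisfying \eqref{eq:SEElinearcond}, solve the resulting \emph{linear} problem on the full interval $[0,T]$ via Lemma~\ref{lem:SMR}, and use uniqueness to identify $u$ with this globally $C([0,T];H)$-solution on $[0,\sigma_n)$. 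On $\{\sigma_n=\sigma\}$ this forces $\lim_{t\uparrow\sigma}u(t)$ to exist in $H$, giving the inclusion $\mathcal{W}_{\sup}\cap\{\sigma_n=\sigma<T\}\subseteq\mathcal{W}_{\lim}\cap\{\sigma<T\}$ (up to null sets), after which one lets $n\to\infty$ and applies the abstract criterion. You should spell this reduction out rather than asserting that the ``general principle'' delivers \eqref{eq:blowupcrit} directly.
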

Clearly, \eqref{eq:blowupcrit} is equivalent to
\begin{align*}
\P\Big(\sigma<T, \sup_{t\in [0,\sigma)} \|u(t)\|^2_{H} + \int_{0}^{\sigma} \|u(t)\|_V^2\, \dd t<\infty \Big) = 0 \ \text{ for all }T\in (0,\infty).
\end{align*}

The blow-up criterion \eqref{eq:blowupcrit} is a variant of our results in \cite{AV19_QSEE_2}. In the semi-linear case the result is already contained in \cite[Theorem 4.10(3)]{AV19_QSEE_2}. We will use this result to obtain global well-posedness in several cases below.

\subsection{Global existence and uniqueness}

Under a coercivity condition we obtain two analogues of the existence and uniqueness result in the classical variational framework. In combination with Theorem \ref{thm:contdepdata} below, \emph{global well-posedness} follows.

\begin{theorem}[Global existence and uniqueness I]\label{thm:globalclas}
Suppose that Assumption \ref{ass:condFG} holds, and
for all $T>0$, there exist $\eta,\theta,M>0$ and a progressively measurable $\phi\in L^2((0,T)\times\Omega)$ such that, for any $v\in V$ and $t\in [0,T]$,
\begin{align}\label{eq:pcoercivityclas1}
\langle A(t,v), v\rangle  - (\tfrac{1}{2}+\eta)\nn B(t,v)\nn_{H}^2 \geq \theta\|v\|_{V}^2-M\|v\|_{H}^2-|\phi(t)|^2.
\end{align}
Then for every $u_0\in L^0_{\F_0}(\Omega;H)$, there exists a unique global solution $u$ of \eqref{eq:SEE} such that a.s.
\[u\in C([0,\infty);H)\cap L^2_{\rm loc}([0,\infty);V).\]
Moreover, for each $T>0$ there is a constant $C_T>0$ independent of $u_0$ such that
\begin{align}\label{eq:aprioriLpclas}
\E \|u\|_{C([0,T];H)}^2 + \E\|u\|_{L^2(0,T;V)}^2\leq C_T(1+\E\|u_0\|_{H}^2 + \E\|\phi\|_{L^2(0,T)}^2).
\end{align}
\end{theorem}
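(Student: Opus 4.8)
The plan is to combine the local well-posedness result of Theorem~\ref{thm:local} with an a~priori estimate derived from the coercivity condition~\eqref{eq:pcoercivityclas1}, using the blow-up criterion~\eqref{eq:blowupcrit} to promote the local solution to a global one. First I would fix $u_0\in L^0_{\F_0}(\Omega;H)$ and let $(u,\sigma)$ be the maximal solution from Theorem~\ref{thm:local}, so that a.s.\ $u\in C([0,\sigma);H)\cap L^2_{\mathrm{loc}}([0,\sigma);V)$. The heart of the matter is to show that, for each fixed $T>0$, the quantity $\sup_{t\in[0,\sigma\wedge T)}\|u(t)\|_H^2 + \int_0^{\sigma\wedge T}\|u(t)\|_V^2\,dt$ is finite almost surely; by~\eqref{eq:blowupcrit} this forces $\sigma\geq T$ a.s., and since $T$ is arbitrary, $\sigma=\infty$ a.s.

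The a~priori estimate is obtained via It\^o's formula applied to $t\mapsto\|u(t)\|_H^2$ along the localizing sequence. One has, before taking expectations,
\begin{align*}
\|u(t)\|_H^2 &= \|u_0\|_H^2 - 2\int_0^t \lb u(s), A(s,u(s))\rb\,ds + \int_0^t \nn B(s,u(s))\nn_H^2\,ds \\
&\qquad + 2\int_0^t (u(s), B(s,u(s))\,dW(s))_H.
\end{align*}
Applying~\eqref{eq:pcoercivityclas1} to the drift and It\^o-correction terms, the combination $-2\lb u,A(s,u)\rb + \nn B(s,u)\nn_H^2 \leq -2\theta\|u\|_V^2 + 2\|u\|_H^2 + 2|\phi|^2 - 2\eta\nn B(s,u)\nn_H^2$, so the $\|v\|_V^2$ term has a good negative sign and, crucially, the $\eta$-surplus on the noise term controls the quadratic variation of the martingale part. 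I would then run a stopping-time argument: introduce $\tau_k = \inf\{t\in[0,\sigma): \|u(t)\|_H^2 + \int_0^t\|u(s)\|_V^2\,ds \geq k\}\wedge\sigma\wedge T$, take expectations (the stochastic integral stopped at $\tau_k$ is a genuine martingale with mean zero), apply Gronwall to $\E\|u(t\wedge\tau_k)\|_H^2$, and then recover the supremum bound using the Burkholder--Davis--Gundy inequality on the martingale term, absorbing the resulting $\E\sup_s\|u(s\wedge\tau_k)\|_H$ factor by Young's inequality — this is where the $\eta>0$ strictly positive margin is used, to dominate $2\E(\int_0^{t\wedge\tau_k}\nn B\nn_H^2 ds)^{1/2}$ by $\varepsilon\,\E\sup\|u\|_H^2 + C_\varepsilon\E\int \nn B\nn_H^2$ and feed the latter back through $-2\eta\nn B\nn_H^2$ plus Gronwall. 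Letting $k\to\infty$ by monotone convergence yields~\eqref{eq:aprioriLpclas} with $\sigma\wedge T$ in place of $T$; finiteness of the right-hand side forces $\tau_k\to\sigma\wedge T$ and the desired a.s.\ finiteness of the blow-up quantity, hence $\sigma=\infty$. Uniqueness and the regularity $u\in C([0,\infty);H)\cap L^2_{\mathrm{loc}}([0,\infty);V)$ are inherited from Theorem~\ref{thm:local}, and the estimate~\eqref{eq:aprioriLpclas} then holds on $[0,T]$ itself.

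The main obstacle I anticipate is the rigorous justification of It\^o's formula for $\|u(t)\|_H^2$ in this quasi-linear setting: $u$ lives in $V$ in time-$L^2$ and in $H$ continuously, while $A(s,u(s))$ only lies in $V^*$, so one is in the classical Gelfand-triple It\^o-formula situation — but here $A$ also contains the semilinear parts $F,G$ and the forcing $f,g$, and one must check that $F(\cdot,u)\in L^2(0,\sigma;V^*)$ and $G(\cdot,u)\in L^2(0,\sigma;\calL_2(U,H))$ locally, which follows from the definition of a strong solution together with the growth bounds in Assumption~\ref{ass:condFG}\eqref{it:condFG2} and the interpolation estimate~\eqref{eq:interpolest}. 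A secondary subtlety is that~\eqref{eq:pcoercivityclas1} is stated with the \emph{full} pair $(A,B)$, not just the leading part, so no separate handling of $F,G,f,g$ is needed in the coercivity step — they are already absorbed — but one must be slightly careful that the $\phi$ here (progressively measurable, in $L^2((0,T)\times\Omega)$) is the one appearing in the final estimate, distinct from the localization parameter. Modulo these measure-theoretic and approximation points, the argument is the standard variational a~priori estimate combined with the blow-up criterion, and I would expect it to occupy roughly a page.
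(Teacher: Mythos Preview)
Your proposal is correct and follows essentially the same route as the paper: the a~priori estimate via It\^o's formula on $\|u\|_H^2$, using the $\eta$-surplus to control $\E\int\nn B\nn_H^2$ and then feed this into the BDG step for the supremum bound, followed by Gronwall and the blow-up criterion~\eqref{eq:blowupcrit}. The only point you leave implicit is that for $u_0\in L^0_{\F_0}(\Omega;H)$ (rather than $L^2$) the paper first localizes via $u_{0,j}=\one_{\{\|u_0\|_H\leq j\}}u_0$ and uses the locality of maximal solutions to transfer the a.s.\ finiteness from $v_j$ to $u$ on $\{\|u_0\|_H\leq j\}$; this is the standard reduction you allude to under ``measure-theoretic and approximation points.''
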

In the above result we do not assume any growth conditions on $A$ and $B$ besides the local conditions in Assumptions \ref{ass:condFG}.
The additional $\eta>0$ in the coercivity condition \eqref{eq:pcoercivityclas1} can be arbitrary small and in most examples it does not create additional restrictions. Setting $\eta = 0$ in the coercivity condition \eqref{eq:pcoercivityclas1}, it reduces to the standard one in the variational approach to stochastic evolution equations (see \eqref{eq:coercivity_standard_intro} or \cite[Section 4.1]{LR15}). From a theoretical perspective it is interesting to note that we can also allow $\eta=0$.

\begin{theorem}[Global existence and uniqueness II]\label{thm:globaleta0}
Suppose that Assumption \ref{ass:condFG} holds, and for all $T>0$, there exist $\theta,M>0$ and a progressively measurable $\phi\in L^2((0,T)\times\Omega)$ such that, for any $v\in V$ and $t\in [0,T]$,
\begin{align}\label{eq:pcoercivityclas2}
\langle A(t,v),v\rangle  - \tfrac{1}{2}\nn B(t,v)\nn_{H}^2 &\geq \theta\|v\|_{V}^2-M\|v\|_{H}^2 -|\phi(t)|^2.
\end{align}
Then for every $u_0\in L^0_{\F_0}(\Omega;H)$, there exists a unique global solution $u$ of \eqref{eq:SEE} such that a.s.
\[u\in C([0,\infty);H)\cap L^2_{\rm loc}([0,\infty);V),\]
and the following estimates hold:
\begin{align}\label{eq:aprioriVH1}
\E\int_0^T \|u(t)\|^2_V \, \dd t & \leq C_T(1+\E\|u_0\|_{H}^2+\E\|\phi\|_{L^2(0,T)}^2),
\\ \label{eq:aprioriVH3} \sup_{t\in[0,T]}\E\|u(t)\|^2_{H}& \leq C_T(1+\E\|u_0\|_{H}^2 + \E\|\phi\|_{L^2(0,T)}^2),
\\ \label{eq:aprioriVH2} \E \sup_{t\in [0,T)}\|u(t)\|_{H}^{2\gamma} + \E\Big|\int_0^T \|u(t)\|^2_V \, \dd t\Big|^{\gamma} & \leq C_{\gamma,T} (1+\E\|u_0\|_{H}^{2\gamma} + \E\|\phi\|_{L^2(0,T)}^{2\gamma}),
\end{align}
for every $\gamma\in (0,1)$.
\end{theorem}
The latter result shows that the full statement of the classical variational setting can be obtained in our setting (see Assumption \ref{ass:condFG}) without assuming the growth condition \eqref{eq:growthA} and without the local monotonicity condition.

\begin{remark}[Variants of the coercivity condition]\label{rem:coercivity}
The coercivity condition \eqref{eq:pcoercivityclas1} can be replaced by the following weaker estimate
\begin{align}\label{eq:pcoercivityoud}
\langle A(t,v),v\rangle - \frac{1}{2}\nn B(t,v)\nn_{H}^2 - \eta &\frac{\| B(t,v)^* v\|_U^2}{\|v\|_{H}^2}
\geq \theta\|v\|_{V}^2-M\|v\|_{H}^2-|\phi(t)|^2, \quad v\in V.
\end{align}
To see this, one can easily adapt the proof of Theorem \ref{thm:globaleta0} which will be given under the more restrictive condition \eqref{eq:pcoercivityclas1}. Since it does not give more flexibility in the examples we consider, we prefer to work with \eqref{eq:pcoercivityclas1}.
A variant of condition \eqref{eq:pcoercivityoud} with $\eta = \frac{p-2}{2}$ with $p> 2$ is considered in \cite{GHV}, where it is used to establish bounds on higher order moments of the solution.

In case \eqref{eq:pcoercivityclas1} holds with $\eta=0$ and
\begin{equation}\label{eq:lingrowtheta0}
\|B(t,v)\|_{\mathcal{L}_2(U,H)}\leq C (\phi(t)+\|v\|_{V}), \quad v\in V,
\end{equation}
then \eqref{eq:pcoercivityclas1} also holds for some $\eta>0$ and a slightly worse $\theta>0$. Moreover, \eqref{eq:lingrowtheta0} can even be weakened by replacing $\mathcal{L}_2(U,H)$ by $\mathcal{L}(U,H)$ if one uses the weaker condition \eqref{eq:pcoercivityoud} instead. Note that \eqref{eq:lingrowtheta0} is usually assumed to hold in the classical framework (see \eqref{eq:growthB} with $\alpha=2$).
\end{remark}

\begin{remark}[$\O$-localization of $(u_0,\phi)$]
The function $\phi$ is used to take care of the possible inhomogeneities $f$ and $g$ (see Assumption \ref{ass:condFG}\eqref{it:condFG0}). We have only considered the case that $\phi\in L^2((0,T)\times \Omega)$ for all $T>0$. However, by a standard stopping time argument and using the uniqueness of solutions to \eqref{eq:SEE}, one can also consider the case that for all $T>0$ a.s.\ $\phi\in L^2(0,T)$. Moreover, the estimates of Theorems \ref{thm:globalclas}-\ref{thm:globaleta0} can be used to prove tail estimates for $u$. For instance, \eqref{eq:aprioriLpclas}, the uniqueness of $u$ and the Chebychev inequality readily yield, for all $R,r>0$,
$$
\P\big(\|u\|_{C([0,T];H)}^2 + \E\|u\|_{L^2(0,T);V)}^2>r \big)\leq \frac{C_T(1+R+\E\|u_0\|_{H}^2)
}{r}  + \P(\|\phi\|_{L^2(0,T)}^2 >R).
$$
For more details see the proof of Lemma \ref{lem:gronwall} where a similar argument is employed.

A similar argument can also be employed to obtain tail estimates in case $u_0\in L^0_{\F_0}(\O;H)$.
\end{remark}

\subsection{Continuous dependence on initial data}
Now that we also have global existence and uniqueness of solutions, we can consider the question whether one has continuous dependence on the initial data (in other words, global well-posedness of \eqref{eq:SEE}). This indeed turns out to be the case in the setting of both of the above results. In the case where the monotonicity
\begin{align*}
-2\langle A(t, u)-A(t, v),  u-v\rangle + \|B(t, u) - B(t, v) \|^2_{\calL_2(U, H)} \leq K \|u-v\|_H^2
\end{align*}
holds, continuous dependence is immediate from It\^{o}'s formula (see \cite[Proposition 4.2.10]{LR15}). The following result requires no monotonicity conditions at all, and only assumed the conditions we already imposed for global existence and uniqueness:
\begin{theorem}[Continuous dependence on initial data]\label{thm:contdepdata}
Suppose that the conditions of Theorem \ref{thm:globalclas} or \ref{thm:globaleta0} hold.
Let $u$ and $u_n$ denote the unique global solutions to \eqref{eq:SEE} with strongly $\F_0$-measurable initial values $u_0$ and $u_{0,n}$, respectively. Suppose that $\|u_{0,n}- u_0\|_{H}\to 0$ in probability as $n\to \infty$. Let $T\in (0,\infty)$. Then
\[\|u-u_n\|_{C([0,T];H)} + \|u-u_n\|_{L^2(0,T;V)}\to 0 \ \ \text{in probability as $n\to \infty$.}\]
If additionally $\sup_{n\geq 1}\E\|u_{0,n}\|_{H}^2<\infty$, then for any $q\in (0,2)$
\[\E\|u-u_n\|_{C([0,T];H)}^q + \E\|u-u_n\|_{L^2(0,T;V)}^q\to 0\quad \text{as $n\to \infty$.}\]
\end{theorem}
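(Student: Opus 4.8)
The plan is to localize the problem using stopping times where the solutions stay bounded in the relevant norms, and then apply a local Gronwall argument to the difference $u - u_n$, exploiting maximal $L^2$-regularity (Lemma \ref{lem:SMR}) together with the local Lipschitz estimates from Assumption \ref{ass:condFG}\eqref{it:condFG2}. First I would fix $T>0$ and for $n,k\geq 1$ introduce the stopping time
\[
\tau_{n,k} := \inf\Big\{ t\in [0,T] : \|u(t)\|_{H}^2 + \|u_n(t)\|_{H}^2 + \int_0^t \big(\|u\|_V^2 + \|u_n\|_V^2\big)\,ds \geq k \Big\}\wedge T,
\]
with the convention $\inf\emptyset = T$. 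The a priori bound \eqref{eq:aprioriLpclas} applied to both $u$ and $u_n$ (with constants uniform in $n$ under the $L^2(\Omega;H)$-boundedness assumption, and in general via the tail estimate in the subsequent remark) shows that $\P(\tau_{n,k} < T)$ can be made small uniformly in $n$ by choosing $k$ large. So it suffices to prove that for each fixed $k$, $\|u - u_n\|_{C([0,\tau_{n,k}];H)} + \|u-u_n\|_{L^2(0,\tau_{n,k};V)}\to 0$ in probability; a further reduction to the event where $\|u_0\|_H$ is bounded handles the initial data.

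Next I would write the equation for $w_n := u - u_n$ on $[0,\tau_{n,k}]$. Using the quasi-linear structure \eqref{eq:ABstruc}, the leading term is $A_0(t,u)u - A_0(t,u_n)u_n = A_0(t,u)w_n + (A_0(t,u) - A_0(t,u_n))u_n$; similarly for $B_0$. The first piece has the coercive leading operator $A_0(t,u)$ (frozen along the path of $u$), for which stochastic maximal $L^2$-regularity holds by Lemma \ref{lem:SMR}. The second piece, $(A_0(t,u)-A_0(t,u_n))u_n$, is estimated in $V^*$ by $C_k\|w_n\|_H \|u_n\|_V$ using the local Lipschitz bound on the quasi-linearity; the factor $\|u_n\|_V\in L^2(0,\tau_{n,k})$ is integrable and gives a Gronwall-type weight. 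The semilinear differences $F(t,u) - F(t,u_n)$ and $G(t,u) - G(t,u_n)$ are controlled in $V^*$ and $\calL_2(U,H)$ respectively by $C_k(1+\|u\|_{\beta_j}^{\rho_j} + \|u_n\|_{\beta_j}^{\rho_j})\|w_n\|_{\beta_j}$, and here the interpolation inequality \eqref{eq:interpolest} together with the subcriticality/criticality condition \eqref{eq:condbetarho} is used to absorb $\|w_n\|_{\beta_j}$ partly into the maximal regularity norm $\|w_n\|_{L^2(0,\cdot;V)}$ and partly into a lower-order term with an $L^1$-in-time weight of the form $(1+\|u\|_V^2 + \|u_n\|_V^2)$. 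Applying the maximal regularity estimate on a sufficiently short random time interval (to make the leading constant in front of $\|w_n\|_{C(H)\cap L^2(V)}$ small) and then iterating over a partition of $[0,\tau_{n,k}]$ into finitely many such pieces — or equivalently invoking a stochastic local Gronwall lemma (Lemma \ref{lem:gronwall}) — yields
\[
\E\big[\|w_n\|_{C([0,\tau_{n,k}];H)}^q + \|w_n\|_{L^2(0,\tau_{n,k};V)}^q\big] \lesssim_{k,T,q} \E\|u_0 - u_{0,n}\|_H^q
\]
for $q\in (0,2)$, which tends to $0$.

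The main obstacle I anticipate is twofold. First, the maximal regularity estimate must be applied with the leading operator $A_0(t,u(t))$ evaluated along the random path $u$, which is merely progressively measurable and bounded in $H$ on $[0,\tau_{n,k}]$ — one must check that Lemma \ref{lem:SMR} applies with constants depending only on $k$ (via $\theta_k, M_k$ in Assumption \ref{ass:condFG}\eqref{it:condFG1}) and not on the particular path; this is where the locally uniform coercivity is essential. Second, and more delicate, is the bookkeeping in the Gronwall/iteration step: the weights multiplying $\|w_n\|_H^2$ are random and only $L^1$ in time (not bounded), so one cannot directly use a deterministic Gronwall lemma pathwise while keeping $L^q(\Omega)$ control; this forces the use of the stochastic local Gronwall lemma (Lemma \ref{lem:gronwall}) and explains why one only obtains convergence for $q<2$ rather than $q=2$ — the endpoint is lost because the weight $\|u\|_V^2$ sits exactly at the integrability threshold dictated by the a priori bound \eqref{eq:aprioriLpclas}. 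Handling the criticality case $2\beta_j = 1 + \frac{1}{\rho_j+1}$, where no room is left in the interpolation inequality, is the sharpest point and must be done by absorbing the top-order part of $\|w_n\|_{\beta_j}$ into the small maximal regularity constant obtained by shrinking the time step.
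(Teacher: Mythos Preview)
Your proposal is essentially the same strategy as the paper: localize via stopping times on which $\|u\|_H$ and $\|u_n\|_H$ are bounded, write the linear equation for $w_n=u-u_n$ with leading pair $(A_0(\cdot,u),B_0(\cdot,u))$, apply stochastic maximal $L^2$-regularity (Lemma~\ref{lem:SMR}), control the remainder terms via the local Lipschitz bounds of Assumption~\ref{ass:condFG}\eqref{it:condFG2} combined with the interpolation estimate \eqref{eq:interpolest} and the criticality condition \eqref{eq:condbetarho}, and close with the stochastic Gronwall Lemma~\ref{lem:gronwall}. The paper packages this as a tail estimate (Proposition~\ref{prop:estimateLpcont}) and then reduces general initial data to uniformly bounded data via Egorov's theorem, but the analytic core is the same.

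One point to adjust: your displayed conclusion $\E\big[\|w_n\|_{C([0,\tau_{n,k}];H)}^q + \|w_n\|_{L^2(0,\tau_{n,k};V)}^q\big] \lesssim_{k,T,q} \E\|u_0 - u_{0,n}\|_H^q$ is not what the stochastic Gronwall lemma delivers --- Lemma~\ref{lem:gronwall} outputs a \emph{tail probability} bound, not a moment bound. In the paper the Gronwall step yields convergence in probability; the $L^q$-convergence for $q<2$ is then obtained separately from the uniform $L^2(\Omega)$-bound on $\|u_n\|_Z$ (via \eqref{eq:aprioriLpclas}) together with convergence in probability, using a standard uniform integrability/Vitali-type argument (\cite[Theorem~5.12]{Kal}). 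So your endpoint reasoning is right in spirit (the loss of $q=2$ is real), but the mechanism is ``probability convergence $+$ $L^2$-boundedness $\Rightarrow$ $L^q$-convergence'' rather than a direct $L^q$-Gronwall estimate.
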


\begin{remark}[Feller property]
Suppose that the assumptions of Theorem \ref{thm:globalclas} (resp.\ \ref{thm:globaleta0}) hold and let $u_{\xi}$ be the global solution to \eqref{eq:SEE} with data $\xi\in H$. As usual, one defines the operator
$$({\rm P}_t \varphi)(\xi):=\E[\varphi(u_{\xi}(t))]$$
for $t\geq 0$, $\xi\in H$ and $\varphi\in C_{\rm b}(H)$. Theorem \ref{thm:contdepdata}  shows that ${\rm P}_t :C_{\rm b}(H)\to C_{\rm b}(H)$. This is usually referred as \emph{Feller} property and this is the starting point for investigating existence and uniqueness of invariant measures for \eqref{eq:SEE}. However, this is not the topic of this paper.
\end{remark}

\section{Proofs of the main results}\label{sec:proofs}

In several cases we need an a priori estimate for the solution to the linear equation
\begin{equation}
\label{eq:SEElinear}
\left\{
       \begin{aligned}
               \dd u +\wt{A}(t) u(t) \, \dd t &= f(t) \, \dd t + (\wt{B}(t) u(t) + g(t) ) \, \dd W(t),
\\  u(\lambda) &= u_{\lambda},
             \end{aligned}
           \right.
\end{equation}
where $\lambda$ is a stopping time with values in $[0,T]$ and
$(\wt{A}, \wt{B})$ are linear operators satisfying the boundedness and the variational conditions: there exists $\theta,M>0$ such that, a.e.\ on $ \R_+\times \O$ and for all $v\in V$,
\begin{align}
\label{eq:SEElinearcond0_correction}
\|\wt{A}v\|_{V^*}+\nn\wt{B} v\nn_{H}&\leq M \|v\|_V,\\
\label{eq:SEElinearcond}
 \langle v, \wt{A}v\rangle - \frac12\nn \wt{B} v\nn_{H}^2 &\geq \theta\|v\|_{V}^2-M\|v\|_{H}^2.
\end{align}
Finally, $u_\lambda:\Omega\to H$ is strongly $\F_\lambda$-measurable where $\F_{\lambda}$ denotes the $\sigma$-algebra of the $\lambda$-past. If $(\wt{A},\wt{B})$ are progressively measurable, then a solution to \eqref{eq:SEElinear} is defined in a similar way as in Definition \ref{def:solution}.

The following estimate is well-known in case $\lambda$ is non-random  (see \cite[Theorem 4.2.4]{LR15} and its proof). The random case can be obtained by approximation by simple functions (see \cite[Proposition 3.9]{AV19_QSEE_2}). \begin{lemma}[Stochastic maximal $L^2$-regularity]\label{lem:SMR}
Let $\wt{A}:[0,T]\times \Omega\to \calL(V, V^*)$ and $\wt{B}:[0,T]\times \Omega\to \calL(V, \calL_2(U,H))$ be strongly progressively measurable and suppose that there exist $M,\theta>0$ for which	\eqref{eq:SEElinearcond0_correction} and \eqref{eq:SEElinearcond} hold.
Let $f\in L^2((0,T)\times \Omega;V^*)$ and $g\in L^2((0,T)\times \Omega;\calL_2(U,H))$ be strongly progressively measurable and let $u_\lambda\in L^2_{\F_\lambda}(\Omega;H)$.
Then \eqref{eq:SEElinear} has a unique solution
\[u\in L^2(\Omega;C([\lambda,T];H))\cap L^2(\Omega;L^2(\lambda,T;V)),\]
and there is a constant $C$ independent of $(f, g, u_0)$ such that
\[\E\|u\|_{C([\lambda,T];H)}^2 + \E\|u\|_{L^2(\lambda,T;V)}^2 \leq C\Big(\E\|u_\lambda\|_{H}^2+ \E\|f\|_{L^2(\lambda,T;V^*)}^2 + \E\|g\|_{L^2(\lambda,T;\calL_2(U,H))}^2\Big).\]
\end{lemma}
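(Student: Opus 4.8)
The plan is to reduce the estimate with a random initial time $\lambda$ to the classical linear result with a deterministic initial time, and to obtain uniqueness by a direct energy argument. For uniqueness, if $u,u'$ are two solutions of \eqref{eq:SEElinear} on $[\lambda,T]\times\O$, I would set $w:=u-u'$ and extend it by $w(t):=0$ for $t<\lambda$; then $w$ is strongly progressively measurable, lies in $L^2(\O;C([0,T];H))\cap L^2(\O;L^2(0,T;V))$, satisfies $w(0)=0$, and solves $\di w+\one_{[\lambda,T]}\wt{A}(t)w\,\di t=\one_{[\lambda,T]}\wt{B}(t)w\,\di W$ on $[0,T]$. Applying It\^{o}'s formula to $t\mapsto\|w(t)\|_H^2$, using the coercivity \eqref{eq:SEElinearcond} on $\{\lambda\leq t\}$ and a standard localization to turn the stochastic term into a genuine martingale, yields $\E\|w(t)\|_H^2\leq 2M\int_0^t\E\|w(s)\|_H^2\,\di s$, hence $w\equiv0$ by Gronwall's lemma.

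For existence and the a priori bound I would argue in three steps. \emph{(i)} When $\lambda\equiv s\in[0,T]$ is deterministic, the statement is the classical linear result of the variational approach, \cite[Theorem 4.2.4]{LR15} (Galerkin approximation for existence; It\^{o}'s formula together with \eqref{eq:SEElinearcond}, the Burkholder--Davis--Gundy inequality and Gronwall's lemma for the estimate); inspecting the proof, the constant $C$ may be chosen uniformly over $s\in[0,T]$. \emph{(ii)} For a finitely-valued stopping time $\lambda=\sum_{i=1}^{N}s_i\one_{A_i}$ with $A_i=\{\lambda=s_i\}\in\F_{s_i}$, I would, for each $i$, solve \eqref{eq:SEElinear} on $[s_i,T]$ with the $\F_{s_i}$-measurable initial value $\one_{A_i}u_\lambda$ and inhomogeneities $\one_{A_i}f$, $\one_{A_i}g$ via step \emph{(i)}; multiplying the equation by the $\F_{s_i}$-measurable indicator $\one_{A_i}$ and invoking uniqueness shows that the $i$-th solution vanishes off $A_i$, so the sum of these solutions is a solution on $[\lambda,T]\times\O$, and by orthogonality of the $\one_{A_i}$ the individual estimates add up to the claimed estimate. \emph{(iii)} For a general stopping time $\lambda$ I would approximate it from above by finitely-valued stopping times $\lambda_k\downarrow\lambda$ (dyadic rounding, so that $\lambda\leq\lambda_k\leq\lambda+T2^{-k}$ and $\F_\lambda\subseteq\F_{\lambda_k}$), solve on $[\lambda_k,T]$ with data $u_\lambda$ at time $\lambda_k$ using step \emph{(ii)}, and pass to the limit; note that the bound from step \emph{(ii)} already controls the solutions $u_k$ uniformly in $k$, since $\|f\|_{L^2(\lambda_k,T;V^*)}\leq\|f\|_{L^2(0,T;V^*)}$ and similarly for $g$.

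The main difficulty is the limit passage in step \emph{(iii)}. After extending each $u_k$ to equal $u_\lambda$ on $[\lambda,\lambda_k]$, I would show that $(u_k)$ is Cauchy in $L^2(\O;C([\lambda,T];H))\cap L^2(\O;L^2(\lambda,T;V))$: for $\ell\leq k$ the difference $u_k-u_\ell$ solves the homogeneous equation on $[\lambda_\ell,T]$ with initial value $u_k(\lambda_\ell)-u_\lambda=u_k(\lambda_\ell)-u_k(\lambda_k)$ at time $\lambda_\ell$, so by step \emph{(ii)} the whole difference is controlled by $\E\|u_k(\lambda_\ell)-u_k(\lambda_k)\|_H^2$, and the point is to prove that this tends to $0$ as $\ell,k\to\infty$. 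I expect this to be the technical heart: one writes $u_k(\lambda_\ell)-u_k(\lambda_k)$ as the increment of $u_k$ over the random interval $[\lambda_k,\lambda_\ell]$ of length at most $T2^{-\ell}$, estimates its $H$-norm via the Burkholder--Davis--Gundy inequality, and concludes by the uniform-in-$k$ bounds and dominated convergence. The limit $u$ is then a solution on $[\lambda,T]\times\O$ with the required path regularity, and the a priori estimate for $u$ follows by passing to the limit in the one for $u_k$. This limiting procedure is precisely the content of \cite[Proposition 3.9]{AV19_QSEE_2}, which I would invoke.
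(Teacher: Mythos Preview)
Your proposal is correct and follows essentially the same approach as the paper: the paper simply remarks that the deterministic case is \cite[Theorem 4.2.4]{LR15} and that the random initial time $\lambda$ is handled by approximation with simple (finitely-valued) stopping times via \cite[Proposition 3.9]{AV19_QSEE_2}, which is precisely your steps \emph{(i)}--\emph{(iii)}. You have merely spelled out the details (including the uniqueness argument) that the paper leaves to the cited references.
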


By \cite[Proposition 3.9 and 3.12]{AV19_QSEE_2}, if the stochastic maximal $L^2$-regularity estimate holds on some stochastic interval $[\lambda,T]$, then it also holds on $[\tau,T]$ for all stopping time $\tau\in [\lambda,T]$. Moreover, the constant in the estimate can be chosen to be independent of $\tau$.

\subsection{Proof of Theorem \ref{thm:local}: local existence, uniqueness and blow-up criterion}
\label{ss:proof_local}
The proof of Theorem \ref{thm:local} consists of two parts. First we show local existence and uniqueness via our recent result \cite[Theorem 4.7]{AV19_QSEE_1}.

\begin{proof}[Proof of Theorem \ref{thm:local}: local existence and uniqueness]
Let $n\geq 1$ and set $u_{0,n} = \one_{\{\|u_0\|_{H}\leq n\}} u_0$. For strongly progressively measurable $f_0\in L^2((0,T)\times \Omega;V^*)$ and $g_0\in L^2((0,T)\times \Omega;\calL_2(U,H))$ consider the following linear equation:
\begin{equation}
\label{eq:SEElinearlocal}
\left\{
       \begin{aligned}
               \dd v +A_0(t,u_{0,n}) v(t) \, \dd t &= f_0(t) \, \dd t + (B_0(t,u_{0,n}) v(t) + g_0(t) ) \, \dd W(t),
\\  v(0) &= 0.
             \end{aligned}
           \right.
\end{equation}
By Assumption \ref{ass:condFG}\eqref{it:condFG1} $(A_0(t,u_{0,n}), B_0(t,u_{0,n}))$ satisfies the conditions of Lemma \ref{lem:SMR}. Therefore, \eqref{eq:SEElinearlocal} has a unique solution $v$ and
\begin{align}\label{eq:SMR}
\E\|v\|_{L^2(0,T;V)}^2 + \E\|v\|_{C([0,T];H)}^2\leq C_n\E\|f_0\|_{L^2(0,T;V^*)}^2 + C_n \E\|g_0\|_{L^2(0,T;\calL_2(U,H))}^2.
\end{align}
It follows that $(A_0,B_0)$ satisfies the $\mathcal{SMR}_{2,0}^{\bullet}(0,T)$-condition of \cite[Theorem 4.7]{AV19_QSEE_1}.

By Assumption \ref{ass:condFG} all other conditions of \cite[Theorem 4.7]{AV19_QSEE_1} are satisfied as well (with $p=2$, $\kappa=0$, $F_c = F$ and $G_c = G$). Therefore, we obtain a (unique) maximal solution $(u^T, \sigma^T)$, where $\sigma^T$ takes values in $[0,T]$. Considering $T = m$, $m\in \N$, we can obtain the required maximal solution $(u,\sigma)$ as explained in \cite[Section 4.3]{AV19_QSEE_2}.
\end{proof}

\begin{remark}
In Theorem \ref{thm:local} instead of Assumption \ref{ass:condFG}\eqref{it:condFG1} one could assume the $\mathcal{SMR}_{2,0}^{\bullet}(0,T)$-condition \eqref{eq:SMR} on each $(A_0(\cdot, u_{0,n}),B_0(\cdot, u_{0,n}))$.
\end{remark}

Next we prove the blow-up criteria \eqref{eq:blowupcrit} of Theorem \ref{thm:local} via our recent result \cite[Theorem 4.9]{AV19_QSEE_2}.
\begin{proof}[Proof of Theorem \ref{thm:local}: blow-up criterion]
It suffices to consider $u_0\in L^{\infty}_{\F_0}(\Omega;H)$ (see \cite[Proposition 4.13]{AV19_QSEE_2}). Fix $T\in (0,\infty)$ and set
\begin{align*}
\mathcal{W}_{\sup} &= \Big\{\sigma<T, \  \sup_{t\in [0,\sigma)} \|u(t)\|_{H} +
\int_{0}^{\sigma} \|u(t)\|_{V}^2\, \dd t<\infty\Big\},
\\ \mathcal{W}_{\lim} &= \Big\{\sigma<T, \ \lim_{t\uparrow \sigma}u(t) \ \text{exists in $H$} \ \text{and}
\ \int_{0}^{\sigma} \|u(t)\|_{V}^2\, \dd t<\infty\Big\}.
\end{align*}
For $n\geq 1$ define the stopping times $\sigma_n$ by
\[\sigma_n = \inf\Big\{t\in [0,\sigma): \|u(t)-u_0\|_{H} +  \int_{0}^{t} \|u(t)\|_{V}^2\, \dd t\geq n\Big\}\wedge T,\]
where we set $\inf\emptyset =\sigma\wedge T$.
By definition of $\mathcal{W}_{\sup}$ and $\sigma_n$, we have
\begin{align}\label{eq:probtozero}
\lim_{n\to \infty}\P(\mathcal{W}_{\sup} \cap\{\sigma_n = \sigma\})=\P(\mathcal{W}_{\sup} ).
\end{align}

Define $\wt{A}:[0,T]\times\Omega\to \calL(V, V^*)$ and $\wt{B}:[0,T]\times\Omega\to \calL(V, \calL_2(U,H))$ by
\[\wt{A}(t)v = A_0(t,\one_{[0,\sigma_n)}(t)u(t))v, \ \ \text{and}  \ \ \wt{B}(t)v = B_0(t,\one_{[0,\sigma_n)}(t)u(t))v.\]
By Assumption \ref{ass:condFG}\eqref{it:condFG1}, $(\wt{A},\wt{B})$ satisfy the conditions of Lemma \ref{lem:SMR}.
Let $\wt{f}(t) = F(t,\one_{[0,\sigma_n)}u(t))$ and  $\wt{g}(t) = G(t,\one_{[0,\sigma_n)}u(t))$.
Note that, by the interpolation inequality \eqref{eq:interpolest}, it follows that $\|v\|_{\beta_j}^{(\rho_j+1)}\lesssim\|v\|_{H}^{(2-2\beta_j)(\rho_j+1)}(1+ \|v\|_{V})$ for all $v\in V$ and $j\in \{1,\dots,m_F+m_G\}$, where $(\beta_j,\rho_j,m_F,m_G)$ are as Assumption \ref{ass:condFG}\eqref{it:condFG2}. The definition of $\sigma_n$ shows that $(\wt{f},\wt{g})$ are progressively measurable,
$$
\wt{f}\in L^2((0,T)\times \O ;V^*)\quad \text{ and } \quad \wt{g}\in L^2((0,T)\times \O ;\calL_2(U,H)).
$$
Thus Lemma \ref{lem:SMR} implies that the equation
\begin{equation*}
\left\{
       \begin{aligned}
               \dd v +\wt{A}(t) v(t) \, \dd t &= \wt{f}(t) \, \dd t + (\wt{B}(t) v(t) + \wt{g}(t) ) \, \dd W(t)
\\  v(0) &= u_0.
             \end{aligned}
           \right.
\end{equation*}
has a unique solution $v\in L^2(\Omega;C([0,T];H))\cap L^2(\Omega;L^2(0,T;V))$.
From the definition of $(\wt{A}, \wt{B})$ we see that $(v,\sigma_n)$ is a local solution to \eqref{eq:SEE}. Therefore, by uniqueness $u \equiv v$ on $[0,\sigma_n)$ a.s. In particular, we obtain
\begin{align}\label{eq:sigmauv}
\lim_{t\uparrow \sigma} u(t) = \lim_{t\uparrow \sigma_n} u(t) = \lim_{t\uparrow \sigma_n} v(t) = v(\sigma_n) \quad \text{in $H$   \ a.s.\ on $\{\sigma_n=\sigma <T\}$}
\end{align}
Since $\sigma<T$ on $\mathcal{W}_{\sup}$,
it remains to note that
\begin{align*}
 \P( \mathcal{W}_{\sup}) \stackrel{\eqref{eq:probtozero}}{=} \lim_{n\to \infty} \P(\{\sigma_n=\sigma\}\cap \mathcal{W}_{\sup})  \stackrel{\eqref{eq:sigmauv}}{\leq} \lim_{n\to \infty}\P( \mathcal{W}_{\lim}) =0,
\end{align*}
where in the last step we applied \cite[Theorem 4.9(3)]{AV19_QSEE_2}.
 \end{proof}

\subsection{Proof of Theorem \ref{thm:globalclas}: global existence and uniqueness}

We first prove the following global energy estimates. In the proof of Theorem \ref{thm:globalclas} we will see that $\sigma = \infty$ in the result below.

\begin{proposition}\label{prop:globalesteta}
Suppose that Assumption \ref{ass:condFG} holds, and for all $T>0$, there exist $\eta,\theta,M>0$ and a progressively measurable $\phi\in L^2((0,T)\times\Omega)$ and for any $v\in V$ and $t\in [0,T]$,
\begin{align}\label{eq:pcoercivityclas3}
\langle A(t,v),v\rangle  - (\tfrac{1}{2}+\eta)\nn B(t,v)\nn_{H}^2 &\geq \theta\|v\|_{V}^2-M\|v\|_{H}^2-|\phi(t)|^2.
\end{align}
Let $u_0\in L^2_{\F_0}(\Omega;H)$. Let $(u,\sigma)$ be the maximal solution to \eqref{eq:SEE} provided by Theorem \ref{thm:local}.
Then
for every $T>0$ there is a constant $C_T>0$ independent of $u_0$ such that
\begin{align}\label{eq:apriorietasigma}
\E\sup_{t\in [0,\sigma\wedge T)}\|u(t)\|_{H}^2 +  \E\int_0^{\sigma\wedge T} \|u(t)\|^2_V  \, \dd t
\leq C_T(1+\E\|u_0\|_{H}^2 + \E\|\phi\|_{L^2(0,T)}^2).
\end{align}
\end{proposition}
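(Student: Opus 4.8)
The plan is to prove the a priori bound \eqref{eq:apriorietasigma} by applying It\^o's formula to $t\mapsto \|u(t)\|_H^2$ along the maximal solution $(u,\sigma)$, and then closing a Gronwall-type estimate using the coercivity condition \eqref{eq:pcoercivityclas3}. Since we do not yet know that $\sigma = \infty$, and since $u$ may a priori blow up, the first step is to localize: introduce the stopping times
\[
\tau_k = \inf\Big\{ t\in[0,\sigma)\,:\, \|u(t)\|_H^2 + \int_0^t \|u(s)\|_V^2\,ds \geq k\Big\}\wedge T,
\]
with $\inf\emptyset = \sigma\wedge T$. By the blow-up criterion \eqref{eq:blowupcrit} of Theorem \ref{thm:local} we have $\tau_k \uparrow \sigma\wedge T$ a.s. On $[0,\tau_k)$ the process $u$ lies in $L^2(0,\tau_k;V)\cap C([0,\tau_k];H)$ and satisfies \eqref{eq:defsol}, so It\^o's formula for the square of the $H$-norm (valid in the variational triple $V\hookrightarrow H\hookrightarrow V^*$, cf.\ \cite[Theorem 4.2.5]{LR15}) applies and gives, for $t\in[0,\tau_k]$,
\[
\|u(t)\|_H^2 = \|u_0\|_H^2 - 2\int_0^t \langle u(s), A(s,u(s))\rangle\,ds + \int_0^t \nn B(s,u(s))\nn_H^2\,ds + 2\int_0^t (u(s), B(s,u(s))\,dW(s))_H.
\]

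The key step is to absorb the martingale term. Writing the quadratic variation of the stochastic integral and using \eqref{eq:pcoercivityclas3} to control the It\^o-correction term $\nn B\nn_H^2$, one finds that the coefficient $\eta$ provides exactly the extra room needed: by the Burkholder--Davis--Gundy inequality,
\[
\E\sup_{t\in[0,r\wedge\tau_k]}\Big| \int_0^t (u, B\,dW)_H\Big| \leq C\,\E\Big(\int_0^{r\wedge\tau_k}\|u(s)\|_H^2\,\nn B(s,u(s))\nn_H^2\,ds\Big)^{1/2} \leq \tfrac14 \E\sup_{t\in[0,r\wedge\tau_k]}\|u(t)\|_H^2 + C_\eta\,\E\int_0^{r\wedge\tau_k}\nn B(s,u(s))\nn_H^2\,ds,
\]
and the last integral is, by \eqref{eq:pcoercivityclas3}, bounded by $\tfrac{1}{\eta}\E\int_0^{r\wedge\tau_k}\big(|\phi|^2 + \|u\|_H^2 + \langle u,A\rangle - (\tfrac12+\eta)\nn B\nn_H^2\big) \cdots$ — more precisely one rearranges \eqref{eq:pcoercivityclas3} to bound $\nn B\nn_H^2$ in terms of $\langle u,A\rangle$, $\|u\|_H^2$, $|\phi|^2$ and $-\theta\|u\|_V^2$, and combines this with the deterministic identity above where $-2\langle u,A\rangle$ already appears. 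The upshot is an estimate of the form
\[
\E\sup_{t\in[0,r\wedge\tau_k]}\|u(t)\|_H^2 + \theta\,\E\int_0^{r\wedge\tau_k}\|u(s)\|_V^2\,ds \leq C\Big(\E\|u_0\|_H^2 + \E\|\phi\|_{L^2(0,T)}^2 + \E\int_0^r \sup_{s\in[0,t'\wedge\tau_k]}\|u(s)\|_H^2\,dt'\Big) + 1,
\]
where finiteness of the left-hand side (for each fixed $k$) is guaranteed by the definition of $\tau_k$, so Gronwall's lemma applies and yields a bound on $\E\sup_{t\in[0,\tau_k]}\|u(t)\|_H^2 + \E\int_0^{\tau_k}\|u\|_V^2$ that is independent of $k$. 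Letting $k\to\infty$ and using Fatou's lemma (in the form of monotone convergence for the time-integral, and Fatou for the supremum) gives \eqref{eq:apriorietasigma}.

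The main obstacle I anticipate is the bookkeeping in the absorption step: one must make sure that the $-\theta\|u\|_V^2$ term survives with a strictly positive (if possibly smaller) constant after being partially consumed to control $\nn B\nn_H^2$, and that the splitting of the BDG term genuinely only needs $\eta>0$ rather than a quantitative smallness of $\eta$. A secondary technical point is justifying It\^o's formula directly on the stochastic interval $[0,\tau_k)$ and the a.s.\ convergence $\tau_k\uparrow\sigma\wedge T$, both of which follow from Theorem \ref{thm:local} and standard localization, but should be stated carefully. Finally, to pass from $u_0\in L^2_{\F_0}(\Omega;H)$ to the statement one notes that $\E\|u_0\|_H^2<\infty$ is exactly the standing hypothesis of the proposition, so no further truncation in $u_0$ is needed here (that reduction is handled at the level of Theorem \ref{thm:globalclas}).
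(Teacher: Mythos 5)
Your proposal follows essentially the same route as the paper: localize via stopping times that control $\|u\|_H$ and $\int\|u\|_V^2$, apply It\^o's formula to $\|u\|_H^2$, invoke the coercivity \eqref{eq:pcoercivityclas3} with $\eta>0$ to get an a priori bound on $\E\int \nn B\nn_H^2$, and then close the Gronwall estimate using Burkholder--Davis--Gundy together with Young's inequality. One small inaccuracy: the convergence $\tau_k\uparrow\sigma\wedge T$ a.s.\ does not require the blow-up criterion \eqref{eq:blowupcrit}; it follows directly from the pathwise regularity $u\in C([0,\sigma);H)\cap L^2_{\rm loc}([0,\sigma);V)$ a.s.\ built into the definition of a maximal solution (and in the paper this localization is done by starting from a localizing sequence for $(u,\sigma\wedge T)$ rather than appealing to blow-up). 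This is cosmetic and does not affect the argument.
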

\begin{proof}
Fix $T\in (0,\infty)$.
Let $(\tau_k)_{k\geq 1}$ be a localizing sequence for $(u, \sigma\wedge T)$. Then in particular $u\in C([0,\tau_k];H)\cap L^2(0,\tau_k;V)$ a.s. For all $k\geq 1$, let
$$
\sigma_{k}:=\inf\Big\{ t\in [0,\tau_k]\,:\, \|u(t)-u_0\|_H \geq k \ \text{ and }\ \int_{0}^{t} \|u(s)\|_V^2 \, \dd s\geq k\Big\}
$$
where $\inf\emptyset:=\tau_k$. Then $(\sigma_k)_{k\geq 1}$ is a localizing sequence as well. Letting $u^k(t) = u(t\wedge \sigma_k)$ we have $u^k\in L^2(\O;C([0,T];H))\cap L^2(\O;L^2(0,\tau_k;V))$.
It suffices to find a $C>0$ independent of $(k,u_0)$ such that for all $t\in [0,T]$,
\begin{equation}
\label{eq:gronwall_application_sub_optimal}
\begin{aligned}
\E\sup_{s\in [0,t]}\|u^k(s)\|_H^2
&+ \E \int_{0}^{t} \one_{[0,\sigma_k]}(s)\|u(s)\|_V^2 \, \dd s\\
&\leq C\Big(1+\E\|u_0\|^2_H+ \E\|\phi\|_{L^2(0,T)}^2 + \E \int_{0}^{t} \|u^k(s)\|_H^2\, \dd s \Big).
\end{aligned}
\end{equation}
Indeed, from the latter Gronwall's lemma first gives
\[\E \sup_{t\in [0,T]}\|u^k(t)\|_H^2  \leq C e^{CT}\Big(1+\E\|u_0\|^2_H + \E\|\phi\|_{L^2(0,T)}^2\Big).\]
This combined with \eqref{eq:gronwall_application_sub_optimal} implies
\[\E \int_{0}^{t} \one_{[0,\sigma_k]}(s)\|u(s)\|_V^2 \, \dd s\leq C'(1+\E\|u_0\|^2_H).\]
It remains to let $k\to \infty$ in the latter two estimates.
The proof of \eqref{eq:gronwall_application_sub_optimal} will be divided into two steps.

{\em Step 1: Proof of the estimates \eqref{eq:ItoH_p_grater_than_2}--\eqref{eq:intermediate_estimate_suboptimal} below}. By It\^o's formula (see \cite[Theorem 4.2.5]{LR15}) applied to $\frac12\|\cdot\|^2_{H} $ we obtain
\begin{equation*}
\begin{aligned}
\frac12\|u^{k}(t) \|_{H}^2 &  + \int_0^t \one_{[0,\sigma_k]}(s)  \mathcal{E}_u(s) \, \dd s \\ &
=\frac12\|u_0\|_{H}^2 + \int_0^t \one_{[0,\sigma_k]}(s)  B(s,u(s))^* u(s)\, \dd W(s),
\end{aligned}
\end{equation*}
where
\[
\mathcal{E}_u(t) = \langle A(t, u(t)), u(t)\rangle - \frac12\nn B(t,u(t))\nn_{H}^2 .
\]
Therefore, by \eqref{eq:pcoercivityclas3}
\begin{equation}\label{eq:ItoH_p_grater_than_2}
\begin{aligned}
\frac{1}{2}  \|u^{k}(t) \|_{H}^2
&+ \int_0^t \one_{[0,\sigma_k]}(s)\Big(\theta\|u(s)\|_V^2+
\eta \nn B(s,u(s))\nn_{H}^2 \Big)  \, \dd s
\\
&\leq  \frac{1}{2}\|u_0\|_{H}^2
+ M\int_0^t \one_{[0,\sigma_k]}(s) \|u(s)\|_{H}^2\, \dd s +\|\phi\|_{L^2(0,t)}^2 \\
& \qquad +\int_0^t \one_{[0,\sigma_k]}(s) B(s,u(s))^* u(s)  \, \dd  W(s).
\end{aligned}
\end{equation}
Taking the expected value on both sides of \eqref{eq:ItoH_p_grater_than_2}, we have
\begin{equation}
\label{eq:intermediate_estimate_suboptimal}
\begin{aligned}
 \E\int_0^t &\one_{[0,\sigma_k]}(s)\Big(\theta\|u(s)\|_V^2+\eta \nn B(s,u(s))\nn_{H}^2\Big) \, \dd s  \\
&\leq  \frac{1}{2}\E\|u_0\|_{H}^2 + \E\|\phi\|_{L^2(0,T)}^2
+ M \E\int_0^t \one_{[0,\sigma_k]}(s) \|u(s)\|_{H}^2 \, \dd s.
\end{aligned}
\end{equation}
In particular, this proves the $V$-term part of the estimate \eqref{eq:gronwall_application_sub_optimal} as $u = u^k$ on $[0,\sigma_k]$.

\emph{Step 2: Estimating the martingale part on the RHS\eqref{eq:ItoH_p_grater_than_2}}. Set
$$
\mathcal{S}_u(t):=\int_0^t \one_{[0,\sigma_k]}(s)  B(s,u(s))^* u(s) \, \dd  W(s).
$$
The Burkholder-Davis-Gundy inequality implies
\begin{align*}
&\E \sup_{s\in [0,t]} |\mathcal{S}_u(s)|  \leq C \E \Big(\int_0^t\one_{[0,\sigma_k]}(s) \|B(s,u(s))^* u(s)\|^2_{U} \, \dd s\Big)^{1/2}
\\ & \leq C \E\big[ \big( \sup_{s\in [0,t]} \|u^k(s)\|_{H}^2\big)^{1/2}
\big(\int_0^T \one_{[0,\sigma_k]}(s) \nn B(s,u(s))\nn^2_{H} \, \dd s\big)^{1/2}\Big]
\\ & \leq \frac{1}{4} \E \sup_{s\in [0,t]} \|u^k(s)\|_{H}^2 +
C \E \int_0^t \one_{[0,\sigma_k]}(s) \nn B(s,u(s))\nn^2_{H}  \, \dd s\\
& \leq
\frac{1}{4} \E \sup_{s\in [0,t]} \|u^k(s)\|_{H}^2 +  C(\E\|u_0\|_{H}^2 + \E\|\phi\|_{L^2(0,T)}^2)
+ CM \E\int_0^t \one_{[0,\sigma_k]}(s) \|u(s)\|_{H}^2 \, \dd s,
\end{align*}
where in the last step we applied \eqref{eq:intermediate_estimate_suboptimal}.
Taking $\E[\sup_{s\in [0,t]}|\cdot|]$ in \eqref{eq:ItoH_p_grater_than_2} and using the above estimate we obtain
\[\E\sup_{s\in [0,t]}\|u^k(s)\|_H^2 \leq C'\Big(1+\E\|u_0\|^2_H + \E\|\phi\|_{L^2(0,T)}^2 + \E \int_{0}^{t} \|u^k(s)\|_H^2\, \dd s \Big).
\]
\end{proof}

The energy estimate of Proposition \ref{prop:globalesteta} allows us to prove Theorem \ref{thm:globalclas} via the blow-up criteria \eqref{eq:blowupcrit} of Theorem \ref{thm:local}.

\begin{proof}[Proof of Theorem \ref{thm:globalclas}]
The proof is divided into two steps.

{\em  Step 1: Proof of $\sigma = \infty$.}  We claim that for every $T>0$
\begin{align}\label{eq:claimHeta}
\sup_{t\in [0,\sigma\wedge T)} \|u(t)\|_{H}<\infty \quad \text{and} \quad
\int_{0}^{\sigma\wedge T} \|u(t)\|_{V}^2 \, \dd t <\infty \ \text{a.s.}
\end{align}
Set $u_{0,j} = \one_{\{\|u_0\|_{H}\leq j\}} u_0$. Let $(v_j,\sigma_j)$ denote the maximal solution to \eqref{eq:SEE}  with initial condition $u_{0,j}$. By localization (see \cite[Theorem 4.7]{AV19_QSEE_1}) $v_j = u$ and $\sigma_j = \sigma$ on $\{\|u_0\|_{H}\leq j\}$. Since by Proposition \ref{prop:globalesteta}
\begin{align*}
\sup_{t\in [0,\sigma\wedge T)} \|v_j(t)\|_{H}<\infty \ \ \text{and} \ \ \int_{0}^{\sigma\wedge T} \|v_j(t)\|_{V}^2 \, \dd t <\infty \ \text{a.s.},
\end{align*}
we see that \eqref{eq:claimHeta} holds on $\{\|u_0\|_{H}\leq j\}$. It remains to let $j\to \infty$.

From the claim \eqref{eq:claimHeta} and the blow-up criteria \eqref{eq:blowupcrit} it follows that
\[\P(\sigma<T) = \P\Big(\sigma<T, \ \sup_{t\in [0,\sigma\wedge T)} \|u(t)\|_{H}<\infty \ \ \text{and} \ \ \int_{0}^{\sigma\wedge T} \|u(t)\|_{V}^2 \, \dd t <\infty\Big) = 0.\]
Therefore, $\sigma \geq  T$ a.s., and since $T$ was arbitrary, we obtain $\sigma = \infty$ a.s.

{\em Step 2: A priori bounds.} The estimate \eqref{eq:aprioriLpclas}
follows from \eqref{eq:apriorietasigma} and $\sigma=\infty$ a.s.
\end{proof}

\subsection{Proof of Theorem \ref{thm:globaleta0}}

First we obtain global estimates under the sharper coercivity estimate. As above, in Theorem \ref{thm:globaleta0} we will see that $\sigma = \infty$ in the results below.

\begin{proposition}\label{prop:globalesteta0}
Suppose that Assumption \ref{ass:condFG} holds, and for all $T>0$, there exist $\theta, M>0$ and a progressively measurable $\phi\in L^2((0,T)\times\Omega)$ and for any $v\in V$ and $t\in [0,T]$,
\begin{align*}
\langle A(t,v),v\rangle  - \tfrac{1}{2}\nn B(t,v)\nn_{H}^2 &\geq \theta\|v\|_{V}^2-M\|v\|_{H}^2 -|\phi(t)|^2.
\end{align*}
Let $u_0\in L^2_{\F_0}(\Omega;H)$. Let $(u,\sigma)$ be the maximal solution to \eqref{eq:SEE} provided by Theorem \ref{thm:local}.
Then
for every $T>0$ and $\gamma\in (0,1)$ there are constants $C_T,C_{\gamma,T}>0$ independent of $u_0$ such that
\begin{align}\label{eq:aprioriVH1prop1eta0}
\E\int_0^{\sigma\wedge T} \|u(t)\|^2_V \, \dd t & \leq C_T(1+\E\|u_0\|_{H}^2 +\E\|\phi\|_{L^2(0,T)}^2),
\\ \E \sup_{t\in [0,\sigma\wedge T)}\|u(t)\|_{H}^{2\gamma} + \E\Big|\int_0^{\sigma\wedge T} \|u(t)\|^2_V \, \dd t\Big|^{\gamma}& \leq C_{\gamma,T} (1+\E\|u_0\|_{H}^{2\gamma} +\E\|\phi\|_{L^2(0,T)}^{2\gamma}).
\label{eq:aprioriVH1prop2eta0}
\end{align}
\end{proposition}
The estimate \eqref{eq:aprioriVH1prop2eta0} can be improved if $B$ has linear growth (see Remark \ref{rem:coercivity}).

\begin{proof}
We begin by repeating the localization argument used in the proof of Proposition \ref{prop:globalesteta}. Throughout the proof we fix $T\in (0,\infty)$.
Let $(\tau_k)_{k\geq 1}$ be a localizing sequence for $(u, \sigma\wedge T)$, cf.\ Definition \ref{def:defsol}. Then in particular, $u\in C([0,\tau_k];H)\cap L^2(0,\tau_k;V)$ a.s. Let
\[\sigma_{k} = \inf\Big\{t\in [0,\tau_k]: \|u(t)-u_0\|_{H}\geq k \ \text{and} \ \int_0^t \|u(s)\|_{V}^2 \, \dd s\geq k \Big\}, \]
where we set $\inf\emptyset  = \tau_k$. Then $(\sigma_k)_{k\geq 1}$ is a localizing sequence for $(u, \sigma)$ as well. Letting $u^k(t) = u(t\wedge \sigma_k)$ we have $u^k\in L^2(\Omega;C([0,T];H))\cap L^2(\Omega;L^2(0,\tau_k;V))$.

The idea will be to eventually apply a stochastic Gronwall lemma. To this end we set $u^{k}(t) = u(t\wedge \sigma_k)$ for $t\in [0,T]$ and $k\geq 1$.
Then a.s.\ for all $t\in [0,T]$,
\begin{align*}
u^{k}(t) = u_0- \int_0^t \one_{[0,\sigma_k]}(s) A(s,u^k(s))   \, \dd s + \int_0^t \one_{[0,\sigma_k]}(s) B(s,u^k(s)) \, \dd W(s).
\end{align*}

{\em Step 1: Proof of the estimate \eqref{eq:ItoHeta0} below}. By It\^o's formula (see \cite[Theorem 4.2.5]{LR15}) applied to $\frac12\|\cdot\|^2_{H}$ we obtain
\begin{align*}
\frac12\|u^{k}(t) \|_{H}^2 &  + \int_0^t \one_{[0,\sigma_k]}(s)  \mathcal{E}_{u^k}(s) \, \dd s
 =\frac12\|u_0\|_{H}^2 + \int_0^t \one_{[0,\sigma_k]}(s) B(s,u^k(s))^* u \, \dd W(s),
\end{align*}
where
\[\mathcal{E}_{u^k}(t) = \langle A(t, u^k(t)), u^k(t)\rangle - \frac12\nn B(t,u^k(t))\nn_{H}^2.\]
Therefore, by the coercivity condition we find that
\begin{equation}\label{eq:ItoHeta0}
\begin{aligned}
\frac{1}{2} \|u^{k}(t) \|_{H}^2  & + \theta  \int_0^t \one_{[0,\sigma_k]}(s)\|u^k(s)\|_{V}^2 \, \dd s
\\ \leq & \frac{1}{2}\|u_0\|_{H}^2
+ M \int_0^t \one_{[0,\sigma_k]}(s) \|u^k(s)\|_{H}^2 \, \dd s + \int_0^t|\phi(s)|_{L^2(0,t)}^2\,\dd s \\ & \qquad +\int_0^t \one_{[0,\sigma_k]}(s) B(s,u^k(s))^* u^k(s)\, \dd  W(s).
\end{aligned}
\end{equation}

{\em Step 2: $u\in L^2(\O;L^2(0,\tau;V))$ and the estimate \eqref{eq:aprioriH22Hwithoutketa0} below holds.}
Taking expectations in \eqref{eq:ItoHeta0} gives that for all $t\in [0,T]$,
\begin{equation}\label{eq:Ito1Heta0}
\begin{aligned}
\frac12\E\|u^{k}(t) \|_{H}^2& + \theta\E \int_0^t \one_{[0,\sigma_k]}  \|u^k(s)\|_{V}^2 \, \dd s \\ & \leq
\frac12\E\|u_0\|_{H}^2+ M \E \int_0^t \one_{[0,\sigma_k]} \|u^k(s)\|_{H}^2 \, \dd s + \E\|\phi\|_{L^2(0,t)}^2.
\end{aligned}
\end{equation}
Therefore, by the classical Gronwall inequality applied to $t\mapsto \E\|u^{k}(t)\|^2_{H}$,
\begin{align}\label{eq:Ito2Heta0}
\sup_{t\in [0,T]}\E\|u^{k}(t) \|_{H}^2\leq  C_T(\E\|\phi\|_{L^2(0,T)}^2+\E\|u_0\|_{H}^2).
\end{align}
Thus also
\begin{equation}\label{eq:Ito3Heta0}
\E\int_0^t  \one_{[0,\sigma_k]}(s) \|u^k(s)\|_{H}^2 \, \dd s
\leq \int_0^t \E(\|u^{k}(s)\|_{H}^2) \, \dd s
\leq T C_T(\E\|\phi\|_{L^2(0,T)}^2+\E\|u_0\|_{H}^2).
\end{equation}
Applying \eqref{eq:Ito2Heta0} and \eqref{eq:Ito3Heta0} in \eqref{eq:Ito1Heta0} also yields the estimate
\begin{align}\label{eq:aprioriH22Heta0}
\E \int_0^t \one_{[0,\sigma_k]}  \|u(s)\|_{V}^2 \, \dd s  \leq C_T'(\E\|\phi\|_{L^2(0,T)}^2+\E\|u_0\|_{H}^2).
\end{align}
Letting $k\to \infty$ in \eqref{eq:Ito3Heta0} and \eqref{eq:aprioriH22Heta0}, by Fatou's lemma we obtain that
\begin{align}\label{eq:aprioriH22Hwithoutketa0}
\E\int_0^{\sigma\wedge T}  \|u(s)\|_{V}^2 ds\leq C_T''(\E\|u_0\|_{H}^2 + \E\|\phi\|_{L^2(0,T)}^2).
\end{align}
In particular \eqref{eq:aprioriVH1prop1eta0} follows.

{\em Step 3: Conclusion}.
From \eqref{eq:ItoHeta0} and the stochastic Gronwall lemma of \cite[Theorem 2.1]{MeSch} with $Z_t=\sup_{s\in [0,t]}\|u(s)\|_{H}^2+\int_0^t \|u(s)\|_V^2\,\dd s$ (also see \cite[Theorem 4.1]{geiss2021sharp}) we obtain
that, for all $\gamma\in (0,1)$, there exists $C_{\gamma,T}>0$ independent of $k\geq 1$ such that
\begin{align*}
\E\Big|\int_0^{\sigma_k\wedge T} \|u(t)\|^2_V \, \dd t\Big|^{\gamma} + \E \sup_{t\in [0,T]}\|u^{k}(t) \|_{H}^{2\gamma}
& \leq C_{\gamma,T} (1+\E\|u_0\|_{H}^{2\gamma} +\E\|\phi\|_{L^2(0,T)}^{2\gamma}).
\end{align*}
Letting $k\to \infty$ we obtain that
\begin{align*}
\E\Big|\int_0^{\sigma\wedge T} \|u(t)\|^2_V \, \dd t\Big|^{\gamma}+
\E \sup_{t\in [0,T]}\|u(t) \|_{H}^{2\gamma}
& \leq C_{\gamma,T} (1+\E\|u_0\|_{H}^{2\gamma} +\E\|\phi\|_{L^2(0,T)}^{2\gamma}).
\end{align*}
Thus \eqref{eq:aprioriVH1prop2eta0} follows.
\end{proof}

By the  estimates of Proposition \ref{prop:globalesteta0} we can check the blow-up criteria \eqref{eq:blowupcrit}:

\begin{proof}[Proof of Theorem \ref{thm:globaleta0}]
The proof is divided into two steps.

{\em  Step 1: Proof of $\sigma = \infty$.}  Reasoning as in Step 1 in the proof of Theorem \ref{thm:globalclas}, by a localization argument, Proposition \ref{prop:globalesteta0} shows that
\begin{align}\label{eq:claimHeta0}
\sup_{t\in [0,\sigma\wedge T)} \|u(t)\|_{H}<\infty \ \ \text{and} \ \ \int_{0}^{\sigma\wedge T} \|u(t)\|_{V}^2 \, \dd t <\infty \ \text{a.s.}
\end{align}
From \eqref{eq:claimHeta0} and the blow-up criteria \eqref{eq:blowupcrit} it follows that
\[\P(\sigma<T) = \P\Big(\sigma<T, \sup_{t\in [0,\sigma\wedge T)} \|u(t)\|_{H}<\infty \ \ \text{and} \ \ \int_{0}^{\sigma\wedge T} \|u(t)\|_{V}^2 \, \dd t <\infty\Big) = 0.\]
Therefore, $\sigma \geq  T$ a.s., and since $T$ was arbitrary, we obtain $\sigma = \infty$ a.s.

{\em Step 2: A priori bounds.}
The bounds \eqref{eq:aprioriVH1} and \eqref{eq:aprioriVH2} are immediate from \eqref{eq:aprioriVH1prop1eta0} and \eqref{eq:aprioriVH1prop2eta0}.
To prove, \eqref{eq:aprioriVH3} note that we can replace
all $\sigma_k$'s by $T$ in the proof of Proposition \ref{prop:globalesteta0},
and thus the required bound follows from \eqref{eq:Ito2Heta0}.
\end{proof}

\subsection{Proof of Theorem \ref{thm:contdepdata}: continuous dependence on initial data}
In this section we use the notation $Z:=C([0,T];H)\cap L^2(0,T;V)$.
The following tail estimate is the key to the proof of the continuous dependency.
\begin{proposition}\label{prop:estimateLpcont}
Suppose that the conditions of Theorem \ref{thm:globaleta0} hold.
Let $u$ and $v$ denote the solution to \eqref{eq:SEE} with initial values $u_0$ and $v_0$ in $L^0_{\F_0}(\Omega;H)$, respectively, and where $\|u_0\|_{L^\infty(\Omega;H)}+\|v_0\|_{L^\infty(\Omega;H)}\leq r$ for some $r>0$. Then for every $T>0$ there exist $\psi_1,\psi_2:[r,\infty)\to (0,\infty)$ both independent of $u_0$ and $v_0$ such that $\lim_{R\to \infty}\psi_2(R) = 0$ and for all $\varepsilon>0$ and $R\geq r$
\begin{align*}
\P(\|u-v\|_Z\geq \varepsilon) \leq \varepsilon^{-2}\psi_1(R) \E\|u_0 - v_0\|_{H}^2 + \psi_2(R)(1+\E\|u_0\|^2_{H} + \E\|v_0\|_{H}^2)
\end{align*}
where $Z=C([0,T];H)\cap L^2(0,T;V)$.
\end{proposition}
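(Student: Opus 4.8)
The plan is to compare $u$ and $v$ through the equation satisfied by $w:=u-v$, treating the difference of the quasi-linear leading parts as a linear operator and everything else as forcing, and to run a \emph{localized} stochastic maximal $L^2$-regularity argument (Lemma~\ref{lem:SMR}) in which the scaling relation \eqref{eq:condbetarho} lets one absorb the $V$-norm of $w$. Concretely, fix $T>0$ and for $R\ge r$ introduce the stopping time
\[
\tau_R:=\inf\Big\{t\in[0,T]:\ \|u(t)\|_H+\|v(t)\|_H\ge R\ \text{ or }\ \int_0^t\big(\|u\|_V^2+\|v\|_V^2\big)\,ds\ge R\Big\}\wedge T .
\]
By the a priori estimates \eqref{eq:aprioriVH1} and \eqref{eq:aprioriVH2} of Theorem~\ref{thm:globaleta0} together with Chebyshev's inequality, $\P(\tau_R<T)\le\psi_2(R)\big(1+\E\|u_0\|_H^2+\E\|v_0\|_H^2\big)$ with $\psi_2(R):=C_T(\log R)^{-1}+C_TR^{-1}\to0$ (the fixed number $\E\|\phi\|_{L^2(0,T)}^2$ being absorbed into $C_T$). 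On $\{\tau_R=T\}$ the quantity $\|u-v\|_Z$ equals $\|w^{\tau_R}\|_{C([0,T];H)}+\big(\int_0^{\tau_R}\|w\|_V^2\big)^{1/2}$, where $w^{\tau_R}(t):=w(t\wedge\tau_R)$, so Chebyshev's inequality reduces the assertion to
\[
\E\Big(\sup_{t\in[0,\tau_R]}\|w(t)\|_H^2+\int_0^{\tau_R}\|w(t)\|_V^2\,dt\Big)\le\psi_1(R)\,\E\|u_0-v_0\|_H^2
\]
for some finite $\psi_1(R)$ depending only on $R$ and $T$.

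To establish this, observe that by the quasi-linear structure \eqref{eq:ABstruc} the process $w$ solves on $[0,\tau_R)$ a linear-type equation with leading pair $\big(A_0(\cdot,\wt u),B_0(\cdot,\wt u)\big)$, $\wt u:=\one_{[0,\tau_R]}u$, forcing $\wt f:=F(\cdot,u)-F(\cdot,v)-\big(A_0(\cdot,u)-A_0(\cdot,v)\big)v$, additive noise $\wt g:=\big(B_0(\cdot,u)-B_0(\cdot,v)\big)v+G(\cdot,u)-G(\cdot,v)$, and $w(0)=u_0-v_0$; by Assumption~\ref{ass:condFG}\eqref{it:condFG1} the pair $\big(A_0(\cdot,\wt u),B_0(\cdot,\wt u)\big)$ is coercive with constants $\theta_R,M_R$, so Lemma~\ref{lem:SMR} applies with an SMR constant $C_R$ that may be taken uniform over subintervals of $[0,T]$ and depends only on $R$ and $T$. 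I would then chop $[0,\tau_R]$ by stopping times $0=\mu_0\le\mu_1\le\cdots\le\mu_N=\tau_R$, where $\mu_\ell:=\inf\{t\ge\mu_{\ell-1}:\int_{\mu_{\ell-1}}^t(\|u\|_V^2+\|v\|_V^2+1)\,ds\ge\delta\}\wedge\tau_R$ for a small $\delta=\delta(R,T)>0$; since the total budget on $[0,\tau_R]$ is at most $2R+T$, the number $N=\lceil(2R+T)/\delta\rceil$ is \emph{deterministic}. On each $[\mu_{\ell-1},\mu_\ell]$, after the usual localization along a sequence $\tau_k\uparrow\infty$ ensuring square-integrability (exactly as in the proof of Proposition~\ref{prop:globalesteta0}), Lemma~\ref{lem:SMR} gives, writing $W_\ell:=\sup_{[\mu_{\ell-1},\mu_\ell]}\|w\|_H$,
\[
\E\Big(W_\ell^2+\int_{\mu_{\ell-1}}^{\mu_\ell}\|w\|_V^2\Big)\le C_R\,\E\|w(\mu_{\ell-1})\|_H^2+C_R\,\E\Big(\int_{\mu_{\ell-1}}^{\mu_\ell}\|\wt f\|_{V^*}^2+\int_{\mu_{\ell-1}}^{\mu_\ell}\nn\wt g\nn_H^2\Big).
\]

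Next I would estimate the forcing on $[\mu_{\ell-1},\mu_\ell]\subseteq[0,\tau_R]$ via Assumption~\ref{ass:condFG}\eqref{it:condFG2}, the interpolation inequality \eqref{eq:interpolest} and Hölder's inequality in time. A representative term is handled by
\[
\int_{\mu_{\ell-1}}^{\mu_\ell}\|u\|_{\beta_j}^{2\rho_j}\|w\|_{\beta_j}^2\,dt\le C\,R^{4\rho_j(1-\beta_j)}\,W_\ell^{2(2-2\beta_j)}\,\delta^{\,2-2\beta_j}\Big(\int_{\mu_{\ell-1}}^{\mu_\ell}\|w\|_V^2\Big)^{2\beta_j-1},
\]
where $(2\beta_j-1)(\rho_j+1)\le1$, $\int_{\mu_{\ell-1}}^{\mu_\ell}\|u\|_V^2\le\delta$ and $\mu_\ell-\mu_{\ell-1}\le\delta$ have been used; Young's inequality turns this into $K_R\,\delta^{\kappa}\big(W_\ell^2+\int_{\mu_{\ell-1}}^{\mu_\ell}\|w\|_V^2\big)$ with $\kappa:=\min_j(2-2\beta_j)>0$, and the quasi-linear perturbations $\big(A_0(t,u)-A_0(t,v)\big)v$, $\big(B_0(t,u)-B_0(t,v)\big)v$ are controlled even more easily by $C_R^2\,\delta\,W_\ell^2$. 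Choosing $\delta=\delta(R,T)$ small enough that $C_RK_R\delta^{\kappa}\le\tfrac14$, the right-hand side of the chopped SMR estimate is absorbed into the left, leaving $\E\big(W_\ell^2+\int_{\mu_{\ell-1}}^{\mu_\ell}\|w\|_V^2\big)\le2C_R\,\E\|w(\mu_{\ell-1})\|_H^2\le2C_R\,\E\sup_{[0,\mu_{\ell-1}]}\|w\|_H^2$. An induction on $\ell$ then yields $\E\big(\sup_{[0,\mu_\ell]}\|w\|_H^2+\int_0^{\mu_\ell}\|w\|_V^2\big)\le(1+2C_R)^{\ell}\,\E\|u_0-v_0\|_H^2$; taking $\ell=N$ and letting $k\to\infty$ (Fatou's lemma) gives the reduced estimate from the first paragraph with $\psi_1(R):=(1+2C_R)^{N(R)}$, which depends only on $R$ and $T$. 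Combining the two reductions finishes the proof.

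The step I expect to be the main obstacle is the combination of the forcing estimate with the absorption: one must verify that \emph{every} contribution of $F$, $G$ and of the quasi-linear perturbations can be recast in the form $(\text{prefactor vanishing as }\delta\to0)\times\big(W_\ell^2+\int_{\mu_{\ell-1}}^{\mu_\ell}\|w\|_V^2\big)$, which is precisely where \eqref{eq:condbetarho} enters --- in the critical case $(2\beta_j-1)(\rho_j+1)=1$ the power of $\mu_\ell-\mu_{\ell-1}$ is $0$, so the smallness has to come entirely from $\int_{\mu_{\ell-1}}^{\mu_\ell}\|u\|_V^2\le\delta$ --- and one must keep track that the number $N(R)$ of subintervals and the SMR constant $C_R$, hence $\psi_1$, are independent of $u_0$ and $v_0$.
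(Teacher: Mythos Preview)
Your proposal is correct and reaches the same conclusion, but the paper takes a somewhat different and more streamlined route. The paper's $\tau_R$ involves only the $H$-norm bound $\|u(t)\|_H+\|v(t)\|_H\ge R$; rather than chopping $[0,\tau_R]$ into pieces and iterating by hand, it applies a \emph{pointwise-in-time} Young inequality to the interpolation estimate to obtain
\[
\|x\|_{\beta_j}^{\rho_j}\|z\|_{\beta_j}\le C_\delta\,\|x\|_H^{\rho_j}\|x\|_V\,\|z\|_H+\delta\,\|z\|_V,
\]
so that after squaring and integrating the small $\delta\|w\|_V$-contribution is absorbed into the maximal regularity estimate in one shot, leaving
\[
\E\|w\|_{Z_\lambda^\Lambda}^2\le C_R'\,\E\|w(\lambda)\|_H^2+C_R'\,\E\int_\lambda^\Lambda\big(1+\|u\|_V^2+\|v\|_V^2\big)\|w\|_H^2\,dt
\]
for arbitrary stopping times $0\le\lambda\le\Lambda\le\tau_R$. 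This is exactly the hypothesis of the stochastic Gronwall Lemma~\ref{lem:gronwall}, whose tail output produces the $\varepsilon^{-2}\psi_1(R)$ term together with a second tail $\P\big(\int_0^T(1+\|u\|_V^2+\|v\|_V^2)\,dt>R\big)$, controlled in turn via Chebyshev and \eqref{eq:aprioriVH1}. In effect your chopping-and-iteration is a hand-rolled instance of Lemma~\ref{lem:gronwall}, whose own proof also partitions the interval by stopping times based on the integral of $f$. What your approach buys is self-containment (no appeal to the abstract Gronwall lemma) at the price of a heavier time-H\"older/Young computation; what the paper's approach buys is a cleaner separation of concerns: the $\delta$-absorption handles only the $\|w\|_V$ part, and all growth in $\|u\|_V,\|v\|_V$ is delegated to the Gronwall machinery.
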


\begin{proof}
Let $Z_a^b:=C([a,b];H)\cap L^2(a,b;V)$. Let $w = u-v$. Then $w$ is the solution to
\begin{align*}
       \left\{\begin{aligned}
        \dd w + A_0(\cdot, u) w \, \dd t &= f_w \, \dd t + (B_0(\cdot, u) w  + g_w) \, \dd W(t),
\\        w(0) &= u_0-v_0,
\end{aligned}\right.
\end{align*}
where $f_w = f_1+f_2$ and $g_w = g_1+g_2$ are given by
\begin{align*}
f_1 & = (A_0(\cdot, v) - A_0(\cdot, u))v, & f_2 &= F(\cdot, u) - F(\cdot, v),
\\ g_1 &= (B_0(\cdot, u) - B_0(\cdot, v))v, & g_2  &= G(\cdot, u) - G(\cdot, v).
\end{align*}
In order to derive an a priori estimate for $w$, we want to apply Lemma \ref{lem:SMR} to the pair $(A_0(\cdot, u),B_0(\cdot, u))$. In order to check \eqref{eq:SEElinearcond} we will use Assumption \ref{ass:condFG}\eqref{it:condFG1} and a suitable stopping time argument to ensure $\|u\|_{H}+\|v\|_{H}\leq R$, where $R\geq r$. Let
$$
\tau_R:=\inf\big\{t\in[0,T]\,:\, \|u(t)\|_{H}+\|v(t)\|_{H}\geq R\big\},\quad \text{ where } \inf\emptyset:=T.
$$
Note that $\{\tau_{R}=T\}=\{\sup_{t\in [0,T]}\|u(t)\|_{H} + \|v(t)\|_{H}\leq R\}$. Thus
 \begin{align*}
\P(\|w\|_{Z}\geq \varepsilon)
& \leq \P(\|w\|_{Z_0^{\tau_R}}\geq \varepsilon, \tau_R=T) + \P(\tau_R<T)
\\ & \leq \P \big(\|w\|_{Z_{0}^{\tau_R}}\geq \varepsilon\big)+ \frac{C_T}{R} (1+\E\|u_0\|_{H}^2 + \E\|v_0\|_{H}^2 + \E\|\phi\|_{L^2(0,T)}^2),
\end{align*}
where we used \eqref{eq:aprioriVH2} with $\gamma=1/2$ in the last step.

It remains to estimate $\P \big(\|w\|_{Z_{0}^{\tau_R}}\geq \varepsilon\big)$. To do so we apply the stochastic Gronwall Lemma \ref{lem:gronwall}. Let $0\leq \lambda\leq \Lambda\leq \tau_R$ be stopping times. Let $\wt{w}$ be the solution to
\begin{align}\label{eq:diffeqcont}
       \left\{\begin{aligned}
        \dd \wt{w} + A_0(\cdot, u^{\tau_R})\wt{ w} \, \dd t
        &= \one_{[\lambda,\Lambda]}f_w \, \dd t + (B_0(\cdot, u^{\tau_R}) \wt{w}  + \one_{[\lambda,\Lambda]} g_w) \, \dd W(t),
\\        \wt{w}(\lambda) &= u(\lambda)-v(\lambda).
\end{aligned}\right.
\end{align}
As above $u^{\tau_R}=u(\cdot\wedge \tau_R)$ which is well-defined as $u\in C([0,\infty);H)$.
By uniqueness of solutions to the linear system \eqref{eq:diffeqcont}, we have $\wt{w} = w$ on $[\lambda,\Lambda]$.
Since $(A_0(\cdot, u^{\tau_R}), B_0(\cdot, u^{\tau_R}))$ satisfies \eqref{eq:SEElinearcond} with constant $\theta_R$ and $M_R$, the maximal regularity Lemma \ref{lem:SMR} applied to \eqref{eq:diffeqcont} on $[\lambda,T]$ gives
\begin{align}\label{eq:SMRestw1}
\begin{aligned}
\E \|w\|^2_{Z_{\lambda}^\Lambda}& = \E \|\wt{w}\|^2_{Z_{\lambda}^{T}} \leq C_R\Big(\E\|u(\lambda) - v(\lambda)\|_{H}^2+ \E\|f_{w}\|_{L^2(\lambda,\Lambda;V^*)}^2 + \E\|g_{w}\|_{L^2(\lambda,\Lambda;\calL_2(U,H))}^2\Big),
\end{aligned}
\end{align}
where the constant $C_R$ in \eqref{eq:SMRestw1} is independent of $(\lambda,\Lambda)$ (see below Lemma \ref{lem:SMR}).

Next we estimate the $f_{w}$ and $g_{w}$ term in terms of $u,v$ and $w$ by using Assumption \ref{ass:condFG}\eqref{it:condFG2}. Without loss of generality (by increasing $\rho_j$ if necessary) we can assume $2\beta_j = 1+\frac{1}{\rho_j+1}$ for every $j\in \{1, \ldots, n\}$.
Then
\begin{align*}
\|A_0(\cdot, x)z - A_0(\cdot, y)z\|_{V^*} + \nn B_0(\cdot, x)z - B_0(\cdot, y)z\nn_{H} \leq C_R\|x-y\|_{H} \|z\|_{V},
\\  \|F(t,x) - F(t,y)\|_{V^*} + \nn G(t,x) - G(t,y)\nn_{H} \leq C_R \sum_{j=1}^{m_F+m_G} (1+\|x\|^{\rho_j}_{\beta_j}+\|y\|^{\rho_j}_{\beta_j})\|x-y\|_{\beta_j},
\end{align*}
for all $x,y\in V$  such that $\|x\|_{H},\|y\|_{H}\leq R$, $z\in V$ and $t\in [0,T]$.
From the interpolation estimate $\|x\|_{\beta_j}\leq C\|x\|_{H}^{2-2\beta_j} \|x\|_{V}^{2\beta_j-1}$, we obtain
\begin{align*}
\|x\|_{\beta_j}^{\rho_j} \|z\|_{\beta_j} &\leq \big[C^{1+\rho_j} \|x\|_{H}^{(2-2\beta_j)\rho_j} \|x\|_{V}^{(2\beta_j-1) \rho_j} \|z\|_{H}^{2-2\beta_j}\big] \|z\|_{V}^{2\beta_j-1}
\\ & \leq C_{\delta}  \|x\|_{H}^{\rho_j} \|x\|_{V} \|z\|_{H} + \delta \|z\|_{V},
\end{align*}
where we used Young's inequality with exponents  $1/(2-2\beta_j)$ and $1/(2\beta_j-1)$, and the fact that $\frac{(2\beta_j-1) \rho_j}{2-2\beta_j} = 1$. Therefore,
\begin{align*}
\|F(t,x) - F(t,y)\|_{V^*} + & \ \nn G(t,x) - G(t,y)\nn_{H} \\ & \leq C_{R,\delta} (1+\|x\|_{V}+\|y\|_{V})\|x-y\|_{H} + (m_F+m_G)\delta \|x-y\|_{V}.
\end{align*}

Adding the $(A,B)$-estimate, we can conclude that
\begin{align*}
\|f_w\|_{V^*} + \nn g_w\nn_{H}\leq C_R (1+\|u\|_{V}+\|v\|_{V})\|w\|_{H} + (m_F+m_G) \delta \|w\|_{V}.
\end{align*}
Combining this with \eqref{eq:SMRestw1} and choosing $\delta_R>0$ small enough we find that
\begin{align*}
\E \|w\|^2_{Z_\lambda^\Lambda}\leq C_R' \E\|u(\lambda) - v(\lambda)\|_{H}^2+ C_R' \E \int_\lambda^\Lambda (1+\|u\|_{V}^2+\|v\|_{V}^2)\|w\|_{H}^2 \, \dd t.
\end{align*}

Since $u,v\in L^2(0,T;V)$ a.s.\ and $C_R'$ is independent of $(\lambda,\Lambda)$, we can apply the stochastic Gronwall lemma \ref{lem:gronwall} with $R$ replaced by $C_R' R$, to find a constant $K_R$ such that
\begin{align*}
\P\big(\|w\|_{Z_0^{\tau_R}}\geq \varepsilon \big)
&\leq \varepsilon^{-2}  K_R\E\|u_0-v_0\|^2_{H}+ \P\Big(T+\|u\|_{L^2(0,T;V)}^2+\|v\|_{L^2(0,T;V)}^2 \geq R\Big).
\\ & \leq \varepsilon^{-2}  K_R\E\|u_0-v_0\|^2_{H} + \frac{T+\E\|u\|_{L^2(0,T;V)}^2 + \E\|v\|_{L^2(0,T;V)}^2}{R}
\\ & \leq \varepsilon^{-2}  K_R\E\|u_0-v_0\|^2_{H} + \frac{C_T}{R}(T+\E\|u_0\|^2_{H} + \E\|v_0\|^2_{H}),
\end{align*}
where we used \eqref{eq:aprioriVH1}. Thus the required estimate follows.
\end{proof}

After these preparation we can now prove the continuous dependence result.
\begin{proof}[Proof of Theorem \ref{thm:contdepdata}]
It suffices to prove the result under the conditions of Theorem \ref{thm:globaleta0} as they are weaker than the conditions of Theorem \ref{thm:contdepdata}.
In Step 1 we deal with the uniformly bounded case, and in Step 2 we reduce to this case. Below we fix $T>0$ and recall that  $Z=C([0,T];H)\cap L^2(0,T;V)$.

{\em Step 1: Case of uniformly bounded initial data}. Suppose that there exists a constant $r>0$ such that $\|u_{0,n}\|_{H} + \|u_0\|_{H}\leq r$ a.s.\ for all $n\geq 1$. We will show that for every $\varepsilon>0$,
\begin{align}\label{eq:toprovelimsuptail}
\limsup_{n\to \infty} \P(\|u-u_n\|_Z\geq \varepsilon) = 0.
\end{align}
Let $\varepsilon>0$. By Proposition \ref{prop:estimateLpcont} we obtain that for all $R\geq r$
\begin{align*}
\P(\|u-u_n\|_Z\geq \varepsilon) \leq \varepsilon^{-2}\psi_1(R) \E\|u_0 - u_{0,n}\|_{H}^2 + \psi_2(R) (1+\E\|u_0\|_{H}^2 + \E\|u_{0,n}\|_{H}^2)
\end{align*}
Therefore,
\begin{align*}
\limsup_{n\to \infty} \P(\|u-u_n\|_Z\geq \varepsilon)  \leq \psi_2(R) (1+2\E\|u_0\|_{H}^2).
\end{align*}
Since $\psi_2(R)\to 0$ as $R\to \infty$, \eqref{eq:toprovelimsuptail} follows.

{\em Step 2: General case}. We will show that  $u_{n}\to u$ in $Z$ in probability. By considering subsequences we may additionally suppose that $u_{0,n}\to u_0$ in $H$ a.s. Let $\delta>0$.
By Egorov's theorem we can find $\Omega_{0}\in \F_0$ and $k\geq 1$ such that $\P(\Omega_{0})\geq 1-\delta$ and $\|u_{0,n}\|_{H}\leq k$ on $\Omega_0$.

Let $u^k$ and $u^k_n$ denote the solutions to \eqref{eq:SEE} with initial data
$u^k_{0} = \one_{\Omega_0} u_{0}$ and $u^k_{0,n} = \one_{\Omega_0} u_{0,n}$, respectively. By $\O$-localization (i.e.\  \cite[Theorem 4.7]{AV19_QSEE_1}), $u^k = u$ and $u^k_n = u_n$ on $\Omega_0$. Therefore,
\begin{align}\label{eq:contredtobdd}
\begin{aligned}
\P(\|u-u_n\|_Z>\varepsilon)
&=  \P(\{\|u-u_n\|_Z>\varepsilon\} \cap \Omega_0) + \P(\Omega_0^c) \\ & \leq \P(\|u^k-u_n^k\|_Z>\varepsilon) + \delta
\end{aligned}
\end{align}
Since $u^k_{0,n}\to u^k_{0}$ in $L^2(\Omega;H)$ and are uniformly bounded in $H$, Step 1 implies $\limsup_{n\to \infty} \P(\|u^k-u_n^k\|_Z\geq \varepsilon) = 0$. Therefore, \eqref{eq:contredtobdd} gives $\limsup_{n\to \infty} \P(\|u_n-u\|_Z >\varepsilon) \leq \delta$. Since $\delta>0$ was arbitrary, this implies the required result.

{\em Step 3:} It remains to prove the $L^q$-convergence. Fix $q_0\in (q, 2)$.
Let $C_0:=\sup_{n\geq 1}\|u_{0,n}\|_{L^2(\Omega;H)}<\infty$.
By Fatou's lemma we see that $u_0\in L^2(\Omega;H)$ with $\|u_0\|_{L^2(\Omega;H)}\leq C_0$.
From either \eqref{eq:aprioriLpclas} or \eqref{eq:aprioriVH2}, we see that
\[\E \|u_n\|_{C([0,T];H)}^{q_0} + \E\|u_n\|_{L^2(0,T;V)}^{q_0}\leq C_T(1+C_0),\]
and the same holds for $u$.
Therefore, $\xi_n:=\|u-u_n\|_{C([0,T];H)} + \|u-u_n\|_{L^2(0,T;V)}$ is uniformly bounded in $L^{q_0}(\Omega)$. Since $\xi_n\to 0$ in probability, from \cite[Theorem 5.12]{Kal} it follows that $\xi_n\to 0$ in $L^q(\Omega)$ for any $q\in (0,2)$.
\end{proof}

\section{Applications to stochastic PDEs}
Throughout this section $(w_t^n\,:\,t\geq 0)_{n\geq 1}$ denotes a sequence of standard Brownian motions on
a probability space $(\Omega, \mathcal{F}, \P)$ with respect to a filtration $(\F_t)_{t\geq 0}$.  To such sequence one can associate an $\ell^2$-cylindrical Brownian motion by setting $W_{\ell^2}(f):=\sum_{n\geq 1} \int_{\R_+} f_n(t) dw_t^n$ for $f=(f_n)_{n\geq 1}\in L^2(\R_+;\ell^2)$.

\subsection{Stochastic Cahn--Hilliard equation}\label{ss:CH}
The stochastic Cahn--Hilliard equation was considered in many previous works, and the reader is for instance referred to \cite{CarMil, DaPDeb} for the case of multiplicative and additive noise, respectively.

On an open and bounded $C^2$-domain $\Dom\subseteq \R^d$ consider the Cahn-Hilliard equation with trace class gradient noise term:
\begin{equation}
\label{eq:CahnHilliard}
\left\{
\begin{aligned}
\dd u +\Delta^2 u\, \dd t &= \Delta (f(u))\, \dd t + \sum_{n\geq 1} g_n(u, \nabla u) \, \dd w^n_t, &\quad &\text{ on }\Dom,
\\ \nabla u \cdot n&=0 \quad \text{and} \quad \nabla (\Delta u) \cdot n=0,  &\quad &\text{ on }\partial \Dom,
\\ u(0)&=u_0, &\quad &\text{ on }\Dom.
\end{aligned}\right.
\end{equation}
Unbounded domains could also be considered using a variation of the assumptions below.

We make the following assumptions on $f$ and $g$:
\begin{assumption}\label{ass:CH}
Let $d\geq 1$ and $\rho\in [0,\frac{4}{d}]$. Suppose that $f\in C^1(\R)$ and $g:[0,\infty)\times \Omega\times\Dom\times\R^{1+d}\to \ell^2$ is $\Progress\otimes \Dom \otimes \mathcal{B}(\R^{1+d})$-measurable and there are constants $L, C\geq 0$ such that a.e.\ on $\R_+\times \O\times \Tor^d$ and for all $y,y'\in\R$ and $z,z'\in \R^d$
\begin{align*}
 |f(y)-f(y')|
& \leq L (1+|y|^{\rho}+|y'|^{\rho})|y-y'|,
\\ |f(y)|&\leq L(1+|y|^{\rho+1}),
\\ f'(y)&\geq -C,
\\ \|g(\cdot,y,z)-g(\cdot,y',z')\|_{\ell^2}
&\leq L(|y-y'|+|z-z'|),
\\ \|(g(\cdot,y,z))_{n\geq 1}\|_{\ell^2} &\leq L(1+|y|+|z|).
\end{align*}
\end{assumption}
The standard example $f(y) = y(y^2-1) = \partial_y [\frac14(1-y^2)^2]$ (double well potential) satisfies the above conditions for $d\in \{1, 2\}$.
Note that in this case the nonlinearity $\Delta f(u)$ does not satisfy the classical local monotonicity condition for stochastic evolution equations, and therefore there are difficulties in applying the classical variational framework to obtain well-posedness for \eqref{eq:CahnHilliard}. In the case of additive noise, well-posedness was studied in \cite[Example 5.2.27 and Remark 5.2.28]{LR15}. Our setting applies in the setting of multiplicative noise under the same conditions on the nonlinearity $\Delta f(u)$.

Let
\[H^2_N(\Dom) = \{u\in H^2(\Dom): \partial_n u|_{\partial\Dom}  =0\},\]
where $\partial_n u = \nabla u \cdot n$ and $n$ denotes the outer normal vector field on $\partial\Dom$.

\begin{theorem}[Global well-posedness]
\label{thm:CH}
Suppose that Assumption \ref{ass:CH} holds. Let $u_0\in L^0_{\F_0}(\Omega; L^2(\Dom))$.
Then \eqref{eq:CahnHilliard} has a unique global solution
\begin{equation*}
u\in C([0,\infty);L^2(\Dom))\cap L^2_{\rm loc}([0,\infty);H^2_N(\Dom)) \ a.s.
\end{equation*}
and for every $T>0$ there exists a constant $C_T$ independent of $u_0$ such that
\begin{align}\label{eq:aprioriLpCH}
\E \|u\|_{C([0,T];L^2(\Dom))}^2 + \E\|u\|_{L^2(0,T;H^2(\Dom))}^2\leq C_T(1+\E\|u_0\|_{L^2(\Dom)}^2).
\end{align}
Finally,  $u$ depends  continuously on the initial data $u_0$ in probability in the sense of Theorem \ref{thm:contdepdata} with $H=L^2(\Dom)$ and $V = H^2_N(\Dom)$.
\end{theorem}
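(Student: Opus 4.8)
The plan is to recast \eqref{eq:CahnHilliard} into the abstract form \eqref{eq:SEE} and then apply Theorems \ref{thm:local}, \ref{thm:globalclas} and \ref{thm:contdepdata}. Let $\Lambda$ denote the realization of $\Delta^2$ on $L^2(\Dom)$ with domain $\{u\in H^4(\Dom):\nabla u\cdot n=\nabla(\Delta u)\cdot n=0\ \text{on}\ \partial\Dom\}$, shifted by a constant so as to be strictly positive and self-adjoint. I would take $H=L^2(\Dom)$, $V=V_1=D(\Lambda^{1/2})=H^2_N(\Dom)$ and $V^*=V_0=D(\Lambda^{-1/2})=H^2_N(\Dom)^*$, so that $H=[V_0,V_1]_{1/2}$ as in Section \ref{ss:variationalsetting}, and by reiteration $V_\beta=D(\Lambda^{\beta-1/2})=H^{4\beta-2}(\Dom)$ for $\beta\in(1/2,1)$ (with the Neumann condition incorporated once $4\beta-2>3/2$). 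One sets $A_0(t,v)v=\Delta^2 v$, acting by $\langle\varphi,\Delta^2 v\rangle=(\Delta v,\Delta\varphi)_{L^2(\Dom)}$, so that $B_0\equiv 0$ and $f=g=0$; one sets $F(t,v)=\Delta(f(v))\in V^*$ by $\langle\varphi,F(t,v)\rangle=(f(v),\Delta\varphi)_{L^2(\Dom)}$ and lets $G(t,v)\in\calL_2(\ell^2,L^2(\Dom))$ be the Hilbert--Schmidt operator associated with $x\mapsto(g_n(x,v(x),\nabla v(x)))_{n\geq1}$. Since test functions $\varphi$ lie in $H^2_N(\Dom)$, two integrations by parts show that \eqref{eq:defsol} is exactly the weak formulation of \eqref{eq:CahnHilliard}, the condition $\nabla(\Delta u)\cdot n=0$ being the natural boundary condition.

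Next I would verify Assumption \ref{ass:condFG}. Since $B_0\equiv 0$ and $A_0$ does not depend on $v$, part \eqref{it:condFG1} reduces to the ellipticity estimate $\langle u,\Delta^2 u\rangle=\|\Delta u\|_{L^2(\Dom)}^2\geq\theta\|u\|_{H^2(\Dom)}^2-M\|u\|_{L^2(\Dom)}^2$ on $H^2_N(\Dom)$, with constants independent of $v$, and the quasi-linear bounds in \eqref{it:condFG2} are trivial. Because $\Delta\colon L^2(\Dom)\to V^*$ is bounded, for the $F$-part it suffices to control $\|f(u)-f(v)\|_{L^2(\Dom)}$ and $\|f(u)\|_{L^2(\Dom)}$; by the polynomial bounds on $f$ in Assumption \ref{ass:CH} and H\"older's inequality these are dominated by $(1+\|u\|_{L^{2(\rho+1)}(\Dom)}^\rho+\|v\|_{L^{2(\rho+1)}(\Dom)}^\rho)\|u-v\|_{L^{2(\rho+1)}(\Dom)}$ and $1+\|u\|_{L^{2(\rho+1)}(\Dom)}^{\rho+1}$. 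Choosing $\beta_F=\tfrac12+\tfrac{d\rho}{8(\rho+1)}$ (and $\beta_F=\tfrac34$ in the degenerate case $\rho=0$), the embedding $V_{\beta_F}=H^{4\beta_F-2}(\Dom)\hookrightarrow L^{2(\rho+1)}(\Dom)$ is exactly the borderline Sobolev embedding, available precisely because $\rho\leq 4/d$, and a direct computation gives $2\beta_F\leq 1+\tfrac1{\rho+1}$ with equality iff $\rho=4/d$; thus $F$ satisfies \eqref{it:condFG2} and the case $\rho=4/d$ is \emph{critical}. For $G$ I would take $\beta_G=\tfrac34$, so $V_{\beta_G}=H^1(\Dom)$: the Lipschitz and linear growth bounds on $g$ give $\|G(t,u)-G(t,v)\|_{\calL_2(\ell^2,L^2(\Dom))}\lesssim\|u-v\|_{H^1(\Dom)}$ and $\|G(t,u)\|_{\calL_2(\ell^2,L^2(\Dom))}\lesssim 1+\|u\|_{H^1(\Dom)}$, and $2\cdot\tfrac34=\tfrac32\leq 1+\tfrac1{0+1}=2$. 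Hence Assumption \ref{ass:condFG} holds and Theorem \ref{thm:local} yields a unique maximal local solution with the stated regularity.

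For global existence I would check the coercivity condition \eqref{eq:pcoercivityclas1}. For $v\in H^2_N(\Dom)$, using $\langle v,F(t,v)\rangle=(f(v),\Delta v)_{L^2(\Dom)}=-\int_\Dom f'(v)|\nabla v|^2\,dx$ (the boundary term vanishing since $\nabla v\cdot n=0$) together with $f'\geq-C$,
\[
\langle v,A(t,v)\rangle=\|\Delta v\|_{L^2(\Dom)}^2+\int_\Dom f'(v)|\nabla v|^2\,dx\geq\|\Delta v\|_{L^2(\Dom)}^2-C\|\nabla v\|_{L^2(\Dom)}^2,
\]
while $\|B(t,v)\|_{\calL_2(\ell^2,L^2(\Dom))}^2\leq L^2\|1+|v|+|\nabla v|\|_{L^2(\Dom)}^2\lesssim 1+\|v\|_{L^2(\Dom)}^2+\|\nabla v\|_{L^2(\Dom)}^2$. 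Since $\|\nabla v\|_{L^2(\Dom)}^2\leq\varepsilon\|\Delta v\|_{L^2(\Dom)}^2+C_\varepsilon\|v\|_{L^2(\Dom)}^2$, for any fixed $\eta>0$ one absorbs the gradient terms into a small multiple of $\|\Delta v\|_{L^2(\Dom)}^2$ and, re-using ellipticity, obtains \eqref{eq:pcoercivityclas1} with $\phi$ a constant. Theorem \ref{thm:globalclas} then yields the unique global solution in $C([0,\infty);L^2(\Dom))\cap L^2_{\rm loc}([0,\infty);H^2_N(\Dom))$ and the a priori bound \eqref{eq:aprioriLpclas}, which is exactly \eqref{eq:aprioriLpCH}. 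Finally, the hypotheses of Theorem \ref{thm:contdepdata} coincide with those just verified, so it applies with $H=L^2(\Dom)$ and $V=H^2_N(\Dom)$, giving the asserted continuous dependence on $u_0$ in probability.

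The main obstacle is the sharp verification of the local Lipschitz and growth estimates on $F=\Delta f(\cdot)$ in \eqref{it:condFG2}: one must correctly identify the complex interpolation scale $V_\beta$ with the Bessel potential spaces $H^{4\beta-2}(\Dom)$ (accounting for the Neumann boundary condition when $\beta$ is close to $1$) and then match the Sobolev exponent against the polynomial growth $\rho$ so that the scaling relation $2\beta_F\leq 1+\tfrac1{\rho+1}$ holds; this is precisely what pins the admissible exponents to $\rho\in[0,4/d]$ and makes $\rho=4/d$ critical. A secondary subtlety is the dissipativity identity $\langle v,\Delta f(v)\rangle=-\int_\Dom f'(v)|\nabla v|^2\,dx$ (valid on $H^2_N(\Dom)$ by approximation under the growth hypotheses), which is what allows the one-sided bound $f'\geq-C$ alone---rather than any global sign or coercivity of $f$---to close the energy estimate.
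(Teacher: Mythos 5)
Your proposal is correct and follows essentially the same route as the paper's proof: recast \eqref{eq:CahnHilliard} with $H=L^2(\Dom)$, $V=H^2_N(\Dom)$, $A_0=\Delta^2$, $F(u)=\Delta f(u)$, $B_0=0$, verify Assumption \ref{ass:condFG} via the Sobolev embedding $V_\beta\hookrightarrow H^{4\beta-2}(\Dom)\hookrightarrow L^{2(\rho+1)}(\Dom)$ (leading to $\rho\le 4/d$ with criticality at equality), establish coercivity from the integration-by-parts identity $\langle u,F(u)\rangle=-\int_\Dom f'(u)|\nabla u|^2\,dx$ combined with $f'\ge -C$ and the interpolation estimate for $\|\nabla u\|_{L^2}$, and invoke Theorems \ref{thm:globalclas} and \ref{thm:contdepdata}. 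The only cosmetic differences are that you compute the minimal admissible $\beta_F$ explicitly and phrase the interpolation scale via fractional powers of a shifted $\Delta^2$, whereas the paper simply uses the embedding $[H,V]_\theta\hookrightarrow H^{2\theta}(\Dom)$ and fixes $2\beta=1+\tfrac{1}{\rho+1}$ directly.
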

\begin{proof}
We first formulate \eqref{eq:CahnHilliard} in the form \eqref{eq:SEE}.
Let $H = L^2(\Dom)$ and $V = H^2_N(\Dom)$.
By \eqref{eq:reiteration} for $\theta\in (0,1)$ one has
\begin{align*}
V_{\frac{1+\theta}{2}} = [H, V]_{\theta} \hookrightarrow [L^2(\Dom), H^{2}(\Dom)]_{\theta} = H^{2\theta}(\Dom),
\end{align*}
where in the last step we used the smoothness of $\Dom$ and standard results on complex or real interpolation.

Let $A = A_0 + F$, where we define $A_0\in \calL(V,V^*)$ and $F:V\to V^*$  by
\[\langle A_0 u, v\rangle = (\Delta v, \Delta u)_{L^2}, \quad \text{and} \quad \langle F(u),v\rangle = (\Delta v, f(u))_{L^2} = -(\nabla v, \nabla (f(u)))_{L^2}.\]
Let $B = B_0 + G$, where $B_0 = 0$ and $G:[0,\infty)\times \Omega\times V\to \calL_2(U,H)$ is defined by
\[(G_n(t,u))(x) = g_n(t,x,u(x), \nabla u(x)).\]

In order to apply Theorem \ref{thm:globalclas} we check Assumption \ref{ass:condFG}.
By the smoothness of $\Dom$ and standard elliptic theory for second order operators (see \cite[Theorem 8.8]{GT83}) there exist $\theta, M>0$ such that for all $u\in V$
\[\|u\|_{H^2(\Dom)}^2\leq \theta \|\Delta u\|_{L^2(\Dom)}^2 + M\|u\|_{L^2(\Dom)}^2.\]
Hence, for all $u\in V$,
\begin{align}\label{eq:coerCH1}
 \langle A_0 u,u\rangle = \|\Delta u\|_{L^2(\Dom)}^2 \geq \theta\|u\|_{V}^2-M\|u\|_{H}^2.
\end{align}
Note that
\begin{align*}
& \|F(u) - F(v)\|_{V^*} \lesssim \|f(\cdot, u) - f(\cdot, v)\|_{L^2(\Dom)}
\\ & \lesssim  \|(1+|u|^{\rho}+|v|^{\rho}) (u-v)\|_{L^{2}(\Dom)} & \text{(by Assumption \ref{ass:CH})}
\\ & \lesssim (1+\|u\|^{\rho}_{L^{2(\rho+1)}(\Dom)}+\|v\|^{\rho}_{L^{2(\rho+1)}(\Dom)} )\|u-v\|_{L^{2(\rho+1)}(\Dom)} & \text{(by H\"older's inequality)}
\\ & \lesssim (1+\|u\|^{\rho}_{H^{4\beta-2}(\Dom)}+\|v\|^{\rho}_{H^{4\beta-2}_0(\Dom)})  \|u-v\|_{H^{4\beta-2}(\Dom)} & \text{(by Sobolev embedding).}
\end{align*}
In the Sobolev embedding we need $4\beta -2- \frac{d}{2} \geq -\frac{d}{2(\rho+1)}$. Therefore, the condition \eqref{eq:condbetarho} leads to $\rho\leq \frac{4}{d}$.
Moreover, we can consider the critical case $2\beta = 1+\frac{1}{\rho+1}$.

One easily checks that $G$ satisfies Assumption \ref{ass:condFG}\eqref{it:condFG2} with $\rho_2 = 0$. Indeed,
\begin{align*}
\|G(t,u) - G(t,v)\|_{L^2(\Dom;\ell^2)}   & \lesssim \|u - v\|_{L^2(\Dom)} + \|\nabla u - \nabla v\|_{L^2(\Dom)}
\lesssim \|u-v\|_{H^{4\beta_2-2}_0(\Dom)},
\end{align*}
where $\beta_2 = 3/4$. The growth estimate can be checked in the same way:
\begin{align*}
\|G(t,u)\|_{L^2(\Dom;\ell^2)}  & \lesssim 1+ \|u\|_{L^2(\Dom)} + \|\nabla u\|_{L^2(\Dom)}
\lesssim 1+ \|u\|_{H^{4\beta_2-2}_0(\Dom)},
\end{align*}
where we used $|\Dom|<\infty$.

Finally concerning the coercivity condition \eqref{eq:pcoercivityclas1} note that interpolation estimates give that for every $\varepsilon>0$ there exists a $C_{\varepsilon}$ such that
\begin{align}
\nonumber
\langle F(u),u\rangle
& = -(\nabla u, \nabla (f(u)))_{L^2}
\\
\label{eq:coerCH3}
 &\leq  -\int_{\Dom} f'(u) |\nabla u|^2 \, \dd x
\\
\nonumber
& \leq C \|\nabla u\|_{L^2(\Dom)}^2\leq \varepsilon \|u\|_{V}^2 + C_{\varepsilon} \|u\|_{L^2(\Dom)}^2.
\end{align}
Similarly, $\|G(t,u)\|_{L^2(\Dom;\ell^2)}^2\leq \varepsilon \|u\|_{V}^2 + C_{\varepsilon} (1+\|u\|_{L^2(\Dom)}^2)$.
Therefore, combining this with \eqref{eq:coerCH1} and \eqref{eq:coerCH3}, we obtain that for $\kappa : = \frac{1}{2}+\eta$ (with $\eta>0$ arbitrary)
\begin{align*}
&\langle A(u),u\rangle - \kappa \nn B(\cdot, u)\nn_{H}^2 \\ & = \langle A_0 u,u\rangle - \langle  F(u),u\rangle -  \kappa \nn G(\cdot, u)\nn_{H}^2 \\ & \geq \theta\|u\|_{V}^2-M\|u\|_{H}^2 - \varepsilon\|u\|_{V}^2 - C_{\varepsilon} \|u\|_{L^2(\Dom)}^2 - \kappa C_{\varepsilon}(1+\|u\|_{L^2(\Dom)}^2) - \kappa\varepsilon\|u\|_{V}^2 \\ & = (\theta-(1+\kappa)\varepsilon)\|u\|_{V}^2-\wt{M}\|u\|_{H}^2.
\end{align*}
Thus taking $\varepsilon>0$ small enough, the result follows from Theorems \ref{thm:globalclas} and \ref{thm:contdepdata}.
\end{proof}

\begin{remark}
It is also possible to add a noise term of the form:
\begin{align*}
B_0(t) u(x) = \sum_{n\geq 1} \sum_{|\alpha|=2}b_{n,\alpha}(t,x) \partial^{\alpha} u(x).
\end{align*}
where we need the stochastic parabolicity condition: for some $\lambda>0$ and all $u\in V$,
\[\int_{\Dom} |\Delta u(x)|^2 \, \dd x -\frac12 \sum_{n\geq 1} \int_{\Dom}\Big|\sum_{|\alpha|=2}b_{n,\alpha}(\cdot,x) \partial^{\alpha}u(x) \Big|^2 \, \dd x\geq \lambda \int_{\Dom} |\Delta u(x)|^2 \, \dd x.\]
Depending on the precise form of $f$, one can allow superlinear $g$ as well. The only requirement is
\begin{equation}
\label{eq:CH_coercivity_sharp}
(\tfrac12+\eta)\int_{\Dom} \sum_{n\geq 1} |g_n(t,x,u, \nabla u)|^2 \, \dd x
\leq \int_{\Dom} f'(u) |\nabla u|^2 \, \dd x + \|\Delta u\|_{L^2}^2+ C\|u\|_{L^2}^2+C
\end{equation}
for some $\eta>0$.
However, for constant functions $u$, the right-hand side only grows quadratically. If $f(u) = y(y^2-1)$, then it would be possible to consider nonlinearities of the form $g(u,\nabla u) = (4-2\eta)u\nabla u$ as well. Using Theorem \ref{thm:globaleta0} instead one can even take $\eta=0$ if $\|\Delta u\|_{L^2}^2$ is replaced by $\varepsilon\|\Delta u\|_{L^2}^2$ for some $\varepsilon<1$ in \eqref{eq:CH_coercivity_sharp}.  However the estimate \eqref{eq:aprioriLpCH} needs to be replaced by \eqref{eq:aprioriVH1}-\eqref{eq:aprioriVH2}.
\end{remark}

\subsection{Stochastic tamed Navier-Stokes equations}\label{ss:TSNS}

In \cite{RockZha} the stochastic tamed Navier-Stokes equations with periodic boundary conditions were considered. In \cite{BrzDha} the problem was studied on $\R^3$. Both these papers construct martingale solutions and prove pathwise uniqueness. Below we show that one can argue more directly using our framework. We will consider the problem on the full space, but the periodic case can be covered by the same method. Remarks about the case of Dirichlet boundary conditions can be found in Remark \ref{rem:DirichletSNS}.

On $\R^3$ consider the tamed Navier-Stokes equations with gradient noise:
\begin{equation}
\label{eq:Navier_Stokes}
\left\{
\begin{aligned}
\dd u &=\big[\Delta u -(u\cdot \nabla)u- \phi_N(|u|^2) u -\nabla p  \big] \, \dd t \\ & \qquad +\sum_{n\geq 1}\big[(\btwod_{n}\cdot\nabla) u -\nabla \wt{p}_n+g_n(\cdot,u)\big] \, \dd w^n_t, \ \ &\text{ on }\R^3,
\\
\div \,u&=0, \ \ &\text{ on }\R^3,
\\ u(0)&=u_0 \ \ &\text{ on }\R^3.
\end{aligned}\right.
\end{equation}
Here $u:=(u^k)_{k=1}^d:[0,\infty)\times \O\times \R^3\to \R^3$ denotes the unknown velocity field, $p,\wt{p}_n:[0,\infty)\times \O\times \R^3\to \R$ the unknown pressures,
\begin{equation*}
(\btwod_{n}\cdot\nabla) u:=\Big(\sum_{j=1}^3 \btwod_n^j \partial_j u^k\Big)_{k=1}^3 \quad \text{ and }\quad
(u\cdot \nabla ) u:=\Big(\sum_{j=1}^3 u^j \partial_j u^k\Big)_{k=1}^3.
\end{equation*}
The function $\phi_N:[0,\infty)\to [0,\infty)$ is a smooth function such that \[\phi_N(x) = 0 \ \text{for $x\in [0,N]$}, \ \ \phi_N(x) = x-N \ \text{for $x\geq N+1$} \ \text{and} \ 0\leq \phi'_N\leq 2.
\]
In the deterministic setting the motivation to study \eqref{eq:Navier_Stokes} comes from the fact if $u$ is a strong solution to the usual Navier-Stokes equations and $\|u\|_{L^\infty((0,T)\times \R^3)}^2\leq N$, then it is also a solution to \eqref{eq:Navier_Stokes} for $N$ large enough. On the other hand, it is possible to give conditions under which \eqref{eq:Navier_Stokes} has a unique \emph{global} strong solution.

\begin{assumption}\label{ass:TSNS}
Let $d=3$. Let $b^j\in W^{1,\infty}(\R^3;\ell^2)$ and set $\sigma^{ij} = \sum_{n\geq 1} b_n^i b_n^j$ for $i,j\in \{1, 2, 3\}$, and suppose that a.e.\ on $\R_+\times\Omega\times\R^3$
\begin{align}\label{eq:stochparSNS}
\sum_{i,j=1}^3\sigma^{ij} \xi_i \xi_j \leq |\xi|^2 \quad \text{for all $\xi\in \R^d$.}
\end{align}
Suppose that there exist $M,\delta>0$ such that for all $j\in \{1,2,3\}$ a.s.\ for all $t\in \R_+$,
$$
\|b^j(t,\cdot)\|_{W^{1,\infty}(\R^3;\ell^2)}\leq M.
$$
The mapping $g:\R_+\times \O\times \R^3\times \R^3\to \ell^2(\N;\R^3)$ is $\Progress\otimes \Borel(\R^3)\otimes \Borel(\R^3)$-measurable. Moreover, assume that for each $t\geq 0$, $g(t,\cdot) \in C^1(\R^3\times\R^3)$ and
there exists $C\geq 0$ such that a.e.\ on $\R_+\times\O\times \R^3$ and for all $y,y'\in \R^3$,
\begin{align}
\label{eq:condgLipSNS}
\|g(\cdot, y) - g(\cdot, y')\|_{\ell^2} +
\| \nabla g(\cdot,y)-\nabla g(\cdot,y')\|_{\ell^2}&\leq C|y-y'|,
\\ \|g(t,\cdot,0)\|_{L^2(\R^3;\ell^2)} + \|\nabla g(t,\cdot,0)\|_{L^2(\R^3;\ell^2)}&\leq C,\label{eq:condgzeroSNS}
\end{align}
where the gradient is taken with respect to $(x,y)\in \R^3\times\R^3$.
\end{assumption}
Note that, as in previous works on \eqref{eq:Navier_Stokes}, our stochastic parabolicity condition \eqref{eq:stochparSNS} is not optimal. This is due to the fact that the taming term $-\phi_N(|u|^2)u$ needs to be handled as well.

Let $U = \ell^2$, $V = \Hs^2$, $H = \Hs^1$ and $V^* = \Ls^2$, where for $k\in \{0, 1, 2\}$ we set
\[
\Hs^k = \{u\in H^k(\R^3;\R^3): \div\,u = 0 \ \text{in} \ \mathscr{D}'(\R^3)\} \quad \text{ and }\quad \Ls^2:=\Hs^0.
\]
Let
\begin{align*}
(u,v)_{H} &= (u,v)_{L^2} + (\nabla u,\nabla v)_{L^2}, & &u,v\in H,
\\ \langle u,v\rangle & = (u,v)_{L^2}  - ( u, \Delta v)_{L^2} =  (u , v- \Delta v)_{L^2},& &u\in V^*,\ v\in V.
\end{align*}

Let $P\in \calL(H^k(\R^3;\R^3))$ be the Helmholtz projection, i.e.\ the orthogonal projection onto $\Hs^k$ for $k\in \{0, 1, 2\}$. After applying the Helmholtz projection, \eqref{eq:Navier_Stokes} can be rewritten as
\eqref{eq:SEE}
where
\begin{equation}
\label{eq:choice_ABFG_NS}
\begin{aligned}
A(u) &= A_0 u - F_1(u) - F_2(u) := -\P \Delta u +\P[(u\cdot \nabla)u]+ \P[\phi_N(|u|^2)  u],
\\ B(t,u)_n &= (B_0(t) u)_n + (G(t,u))_n := \P[(\btwod_{n}\cdot\nabla) u] +\P g_n(u).
\end{aligned}
\end{equation}

From Theorem \ref{thm:globalclas} we will derive the following result.
\begin{theorem}[Global well-posedness]
Suppose that Assumption \ref{ass:TSNS} holds.
Then for every $u\in L^0_{\F_0}(\Omega,\Hs^1)$, \eqref{eq:Navier_Stokes}  has a unique global solution
\begin{equation*}
u\in C([0,\infty);\Hs^1)\cap L^2_{\rm loc}([0,\infty);\Hs^2) \ a.s.,
\end{equation*}
and for every $T>0$ there exists a constant $C_T$ independent of $u_0$ such that
\begin{align*}
\E \|u\|_{C([0,T];\Hs^1)}^2 + \E\|u\|_{L^2(0,T;\Hs^2)}^2\leq C_T(1+\E\|u_0\|_{H^1(\R^3)}^2).
\end{align*}
Finally,  $u$ depends  continuously on the initial data $u_0$ in probability in the sense of Theorem \ref{thm:contdepdata} with $H=\Hs^1$ and $V = \Hs^2$.
\end{theorem}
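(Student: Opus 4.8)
The plan is to rewrite \eqref{eq:Navier_Stokes} in the abstract form \eqref{eq:SEE} through the Helmholtz projection and then to apply Theorem \ref{thm:globalclas} and Theorem \ref{thm:contdepdata}. So I first fix $V=\Hs^2$, $H=\Hs^1$, $V^{*}=\Ls^2$ with the inner product and duality indicated above; since the Helmholtz projection $P$ is a bounded Fourier multiplier on every $H^{k}(\R^3;\R^3)$, complex interpolation gives $[V^{*},V]_\theta=\Hs^{2\theta}(\R^3)$ with norm equivalent to the restriction of $\|\cdot\|_{H^{2\theta}(\R^3)}$, so that $[V^{*},V]_{1/2}=\Hs^1=H$ as required and, by \eqref{eq:reiteration}, $V_\beta\hookrightarrow H^{2\beta}(\R^3;\R^3)$ for $\beta\in(0,1)$. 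With $A_0,F_1,F_2,B_0,G$ as in \eqref{eq:choice_ABFG_NS}, Assumption \ref{ass:condFG}\eqref{it:condFG0} holds trivially (there is no inhomogeneity).

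Next I would verify Assumption \ref{ass:condFG}\eqref{it:condFG1}. Since $P$ commutes with $\Delta$ and $v\in V$ is divergence free, $\lb v,A_0v\rb=(v-\Delta v,-\Delta v)_{L^2}=\|\nabla v\|_{L^2}^2+\|\Delta v\|_{L^2}^2$; on the other hand, using the stochastic parabolicity \eqref{eq:stochparSNS} (which yields $\sum_n\|(b_n\cdot\nabla)w\|_{L^2}^2\le\|\nabla w\|_{L^2}^2$), the bound $\|b^{j}\|_{W^{1,\infty}(\R^3;\ell^2)}\le M$, and the identity $\|\nabla^2v\|_{L^2}^2=\|\Delta v\|_{L^2}^2$ on $\R^3$, one gets $\nn B_0(t)v\nn_H^2\le(1+\varepsilon)\|\Delta v\|_{L^2}^2+C_{M,\varepsilon}\|\nabla v\|_{L^2}^2$ for every $\varepsilon>0$, so that $\lb v,A_0v\rb-\tfrac12\nn B_0(t)v\nn_H^2\ge\theta\|v\|_V^2-M\|v\|_H^2$. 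In Assumption \ref{ass:condFG}\eqref{it:condFG2} the bounds on $A_0$ and $B_0$ are trivial (these operators are linear and state independent); for the nonlinearities I take $m_F=m_G=1$ with the critical choice $\beta_1=\beta_2=\tfrac34$, $\rho_1=\rho_2=1$, so that $2\beta_j=1+(\rho_j+1)^{-1}$ and \eqref{eq:condbetarho} holds. The convective term satisfies $\|(u\cdot\nabla)u-(v\cdot\nabla)v\|_{L^2}\le\|u-v\|_{L^6}\|\nabla u\|_{L^3}+\|v\|_{L^6}\|\nabla(u-v)\|_{L^3}\lesssim(\|u\|_{3/4}+\|v\|_{3/4})\|u-v\|_{3/4}$ by the Sobolev embeddings $H^{3/2}(\R^3)\hookrightarrow L^6$ and $H^{1/2}(\R^3)\hookrightarrow L^3$; the taming term $P[\phi_N(|\cdot|^2)\cdot]$ is cubic, hence $\|F_2(u)-F_2(v)\|_{L^2}\lesssim(\|u\|_{L^6}^2+\|v\|_{L^6}^2)\|u-v\|_{L^6}\le C_n\|u-v\|_{3/4}$ on $\{\|u\|_H,\|v\|_H\le n\}$; and $G$ is treated analogously using \eqref{eq:condgLipSNS}--\eqref{eq:condgzeroSNS}, the superlinear dependence arising from the chain-rule term $(\nabla_yg)(\cdot,u)\nabla u$. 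The companion growth estimates are obtained in the same way.

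The heart of the proof is the coercivity \eqref{eq:pcoercivityclas1}, and this is where the taming term is indispensable. Since $v$ is divergence free the $L^2$-part of the convective term cancels: $\lb v,-F_1(v)\rb=\lb v,P[(v\cdot\nabla)v]\rb=-(\Delta v,(v\cdot\nabla)v)_{L^2}$; an integration by parts together with $\phi_N\ge0$ and $\phi_N'\ge0$ gives $\lb v,-F_2(v)\rb\ge\int_{\R^3}\phi_N(|v|^2)\big(|v|^2+|\nabla v|^2\big)\,dx\ge0$. The taming bound $|v|^2\le\phi_N(|v|^2)+(N+1)$ yields $\|(v\cdot\nabla)v\|_{L^2}^2\le\int_{\R^3}|v|^2|\nabla v|^2\,dx\le\int_{\R^3}\phi_N(|v|^2)|\nabla v|^2\,dx+(N+1)\|\nabla v\|_{L^2}^2$, whence, optimizing Young's inequality, $(\Delta v,(v\cdot\nabla)v)_{L^2}\le\tfrac14\|\Delta v\|_{L^2}^2+\int_{\R^3}\phi_N(|v|^2)|\nabla v|^2\,dx+(N+1)\|\nabla v\|_{L^2}^2$. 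Combining these three facts, the dangerous quantity $\int\phi_N|\nabla v|^2$ is absorbed exactly by the dissipation in $-\lb v,F_2(v)\rb$ and $\lb v,A(v)\rb\ge\tfrac34\|\Delta v\|_{L^2}^2-N\|\nabla v\|_{L^2}^2$. On the noise side, $\nn B(t,v)\nn_H^2\le(1+\delta)\nn B_0(t)v\nn_H^2+(1+\delta^{-1})\nn G(t,v)\nn_H^2\le(1+\delta)(1+\varepsilon)\|\Delta v\|_{L^2}^2+C\|\nabla v\|_{L^2}^2+C(1+\|v\|_H^2)$. Since $(\tfrac12+\eta)(1+\delta)(1+\varepsilon)\to\tfrac12<\tfrac34$ as $\eta,\delta,\varepsilon\to0$, I fix these parameters so that $\theta_0:=\tfrac34-(\tfrac12+\eta)(1+\delta)(1+\varepsilon)>0$; then $\lb v,A(v)\rb-(\tfrac12+\eta)\nn B(t,v)\nn_H^2\ge\theta_0\|\Delta v\|_{L^2}^2-C\|v\|_H^2-C\ge\theta\|v\|_V^2-M\|v\|_H^2-|\phi|^2$ with $\phi$ a suitable constant, which is \eqref{eq:pcoercivityclas1}. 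Theorem \ref{thm:globalclas} then delivers the global solution and the a priori estimate, and Theorem \ref{thm:contdepdata} the continuous dependence; the pressures $p,\wt p_n$ are recovered from $u$ by applying $\mathrm{Id}-P$, in the standard way.

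The step I expect to be the main obstacle is precisely this coercivity verification. The convective term $(\Delta v,(v\cdot\nabla)v)$ cannot be controlled by the dissipation $\|\Delta v\|_{L^2}^2$ at arbitrarily small cost — that would need control of $\|v\|_{L^\infty}$ by $\|v\|_H$, which is unavailable — and it is the taming term, through the two nonnegative quantities $\int\phi_N|v|^2$ and $\int\phi_N|\nabla v|^2$ in $-\lb v,F_2(v)\rb$, that makes the inequality close, leaving exactly $\tfrac34\|\Delta v\|_{L^2}^2$ of dissipation to dominate the noise contribution $(\tfrac12+\eta)\nn B_0(t)v\nn_H^2$. Getting this numerology right (the factor $\tfrac34$ against $\tfrac12$) is the delicate point; the remainder is bookkeeping with the Sobolev embeddings and the structural assumptions on $b$ and $g$.
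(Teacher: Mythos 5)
Your proposal follows the same overall strategy as the paper's proof: rewrite \eqref{eq:Navier_Stokes} via the Helmholtz projection as \eqref{eq:SEE} with the splitting \eqref{eq:choice_ABFG_NS}, check Assumption \ref{ass:condFG}, and verify the coercivity \eqref{eq:pcoercivityclas1} by combining the a priori bound on the convective term with the dissipation produced by the taming term, then invoke Theorems \ref{thm:globalclas} and \ref{thm:contdepdata}. Your identification of the key cancellation is exactly the paper's: the term $\int_{\R^3}|v|^2|\nabla v|^2\,dx$ generated by Young's inequality on $(\Delta v,(v\cdot\nabla)v)$ is absorbed by the $\int_{\R^3}\phi_N(|v|^2)|\nabla v|^2\,dx$ coming from $-\lb v,F_2(v)\rb$, after using $|v|^2\le \phi_N(|v|^2)+(N+1)$; the paper phrases this as $\phi_N(x)\ge x-N$ and isolates the same inequality \eqref{eq:estFSNS}. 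Your numerology $\tfrac34 - (\tfrac12+\eta)(1+\delta)(1+\varepsilon)$ is the same balance the paper achieves through \eqref{eq:TSNScoercivityABpart}, \eqref{eq:estFSNS} and the elementary estimate $(x+y)^2\le(1+\varepsilon)x^2+C_\varepsilon y^2$.

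The only departures are cosmetic. You compute $\lb v,A_0v\rb=\|\nabla v\|_{L^2}^2+\|\Delta v\|_{L^2}^2$ exactly by integration by parts, whereas the paper is content with the weaker lower bound $\ge(1-\varepsilon)\|\Delta v\|_{L^2}^2-C_\varepsilon\|v\|_{L^2}$; both suffice. For the convective term you take the critical pair $\beta_1=\tfrac34$, $\rho_1=1$ and the Hölder splitting $L^6\times L^3$, while the paper takes the strictly subcritical pair $\beta_1=\tfrac58$, $\rho_1=1$ with $L^{12}\times L^{12/5}$; both satisfy \eqref{eq:condbetarho}. For $G$ you claim $\beta_2=\tfrac34$, $\rho_2=1$, which works provided you replace the paper's $\|u-v\|_{L^\infty}\|\nabla v\|_{L^2}$ bound (which needs $\beta_3$ strictly larger than $\tfrac34$) by an $L^6\times L^3$ Hölder splitting as in the $F_1$ case; worth a one-line remark, but not a gap. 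Everything else matches.
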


\begin{proof}
To economize the notation, we write $L^2$ instead of $L^2(\R^3;\R^3)$ etc.

{\em Step 1: Assumption \ref{ass:condFG}\eqref{it:condFG1} holds}. Let $\varepsilon>0$ be fixed. Then
\begin{align*}
\langle  A_0u,u \rangle = (- \Delta u,u-\Delta u)_{L^2}
&\geq \|\Delta u\|_{L^2}^2 - \|u\|_{L^2} \|\Delta u\|_{L^2}\\
& \geq (1-\varepsilon)\|\Delta u\|_{L^2}^2 - C_{\varepsilon}\|u\|_{L^2}.
\end{align*}
Since $\|\P\|_{\calL(L^2)}\leq 1$ and commutes with derivatives, for the $B_0$-part we can write
\begin{align*}
\nn B_0 u\nn_{H}^2  & \leq \sum_{n\geq 1} \|(b_n\cdot \nabla)u\|_{L^2}^2 + \|\nabla[(b_n\cdot \nabla)u]\|_{L^2}^2.
\end{align*}
By \eqref{eq:stochparSNS} the first summand satisfies
\begin{align*}
\sum_{n\geq 1} \|(b_n\cdot \nabla)u\|_{L^2}^2 = \sum_{i,j=1}^3 \int_{\R^3}\sigma^{i,j} \partial_i u \cdot \partial_j u \, \dd x
\leq  \|\nabla u\|_{L^2}^2.
\end{align*}
The second summand is of second order term and satisfies
\begin{align*}
\sum_{n\geq 1}\|\nabla[(b_n\cdot \nabla)u]\|_{L^2}^2
&\leq C_{\varepsilon}\sum_{n\geq 1} \big\||\nabla b_n|\, | \nabla u|\big\|_{L^2}^2 + (1+\varepsilon) T_2.
 \end{align*}
The $C_{\varepsilon}$-term is again of first order:
\begin{align*}
C_{\varepsilon}\sum_{n\geq 1} \big\||\nabla b_n|\, | \nabla u|\big\|_{L^2}^2
\leq C_{\varepsilon} \|b\|_{W^{1,\infty}(\ell^2)}^2 \|\nabla u\|_{L^2}^2.
\end{align*}
For the second order term $T_2$, by \eqref{eq:stochparSNS}, we can write
\begin{align*}
T_2  & = \sum_{i,j,k,\ell=1}^3 \int_{\R^3}\sigma^{i,j} \partial_\ell \partial_i u^k \partial_{\ell }\partial_j u^k \, \dd x
 \leq \sum_{j,k,\ell=1}^3 \int_{\R^3}|\partial_j\partial_\ell  u^k|^2  \, \dd x.
\end{align*}
For later purposes we note that this gives
\begin{align*}
\nn B_0(t,u)\nn_{H}\leq C(1+\|u\|_{V}).
\end{align*}
On the other hand,  by integration by parts (and approximation)
\begin{align*}
\|\Delta u\|_{L^2}^2
= \sum_{i,j,k=1}^3\int_{\R^3} \partial_i^2 u^k \partial_j^2 u^k  \, \dd x
= \sum_{j,k=1}^3\int_{\R^3} |\partial_i \partial_{j} u^k|^2 \, \dd x.
\end{align*}
Therefore, collecting terms and choosing $\varepsilon>0$ small enough we obtain that
\begin{align}\label{eq:TSNScoercivityABpart}
\langle  A_0u,u \rangle - \frac{1+\varepsilon}{2} \nn B_0 u\nn_{H}^2
\geq \frac{1-2\varepsilon}{2}\|u\|_{V}^2 - C_{\varepsilon,\delta,b} \|u\|_{H}^2.
\end{align}

{\em Step 2: $F_1, F_2$ and $G$ in \eqref{eq:choice_ABFG_NS} satisfy Assumption \ref{ass:condFG}}.
For $u,v\in V$ with $\|u\|_{H}\leq n$ and $\|v\|_{H}\leq n$, we have
\begin{align*}
\|F_1(u) - F_1(v)\|_{V^*}&\leq \|(u\cdot \nabla) u - (v\cdot \nabla) v\|_{L^2}
\\ & \leq \|(u\cdot \nabla) (u-v)\|_{L^2}+ \|((u-v)\cdot \nabla) v\|_{L^2}.
\\ & \leq \|u\|_{L^{12}} \|\nabla(u-v)\|_{L^{12/5}} + \|u-v\|_{L^{12}} \|\nabla v\|_{L^{12/5}}
\\ & \leq (\|u\|_{H^{\frac54,2}}+ \|v\|_{H^{\frac54,2}})\|u-v\|_{H^{\frac54,2}},
\end{align*}
where we used H\"older's inequality with $\frac12 = \frac1{12}+\frac{5}{12}$, and  Sobolev embedding with $\frac{5}{4} - \frac32 = -\frac{3}{12}$ and $\frac{5}{4} - \frac32 = 1-\frac{3}{\frac{12}{5}}$. Setting $\beta_1 = \frac58$
we find that
\begin{align*}
\|F_1(u) - F_1(v)\|_{V^*} \leq (\|u\|_{\beta_1} + \|u\|_{\beta_2}) \|u-v\|_{\beta_1}.
\end{align*}
Therefore, we can set $\rho_1=1$ and thus the required condition in Assumption \ref{ass:condFG} since $2\beta_1\leq 1+\frac{1}{\rho_1+1}$.

For $F_2$ note that for $u,v\in V$ with $\|u\|_{H}\leq n$ and $\|v\|_{H}\leq n$ we have
\begin{align*}
\|F_2(u) - F_2(v)\|_{V^*}&\leq \|(\phi_N(|u|^2) - \phi_N(|v|^2)) u\|_{L^2} + \|\phi_N(|v|^2) (u-v)\|_{L^2}
\\ & \leq 2 \|(|u|^2 - |v|^2) u\|_{L^2} + 2\||v|^2(u-v)\|_{L^2}
\\ & \leq 4 \|(|u|^2 + |v|^2)(u-v)\|_{L^2}
\\ & \stackrel{(i)}{\leq} 4 (\|u\|_{L^6}^2+ \|v\|_{L^6}^2)\|u-v\|_{L^6}
\\ & \stackrel{(ii)}{\leq} C (\|u\|_{H}^2+ \|v\|_{H}^2)\|u-v\|_{H}
\\ & \leq 2n^2 C \|u-v\|_{H},
\end{align*}
where we used H\"older's inequality in $(i)$, and Sobolev embedding in $(ii)$.
Thus $F_2$ satisfies the required condition in Assumption \ref{ass:condFG} with any $\beta_2\in (\frac12,1)$ and $\rho_2 = 0$, so in particular we could take $\beta_2 = \beta_1$ and $\rho_2 = \rho_1$ as before.

For $G$ let $u,v\in V$ with $\|u\|_{H}\leq n$ and $\|v\|_{H}\leq n$, and note that
\begin{align*}
\nn G(t,u) - G(t,v)\nn_{H}^2 & \leq \|g(t,\cdot, u) -  g(t,\cdot, v)\|_{L^2(\R^3)}+ \|\partial_{x} g(t,\cdot, u) -  \partial_{x} g(t,\cdot, v)\|_{L^2(\R^3)}  \\ & \qquad +\|\partial_y g(t,\cdot, u)\nabla u -  \partial_y  g(t,\cdot, v)\nabla v\|_{L^2(\R^3)}
\end{align*}
By \eqref{eq:condgLipSNS} the first two terms can be estimated by $L_g \|u-v\|_{L^2(\R^3)}$. Concerning the last term we note that by \eqref{eq:condgLipSNS}
\begin{align*}
& \|\partial_y g(t,\cdot, u)\nabla u -  \partial_y  g(t,\cdot, v)\nabla v\|_{L^2} \\ & \leq
\|\partial_y g(t,\cdot, u)(\nabla u-\nabla v)\|_{L^2} + \|(\partial_y  g(t,\cdot, u) - \partial_y  g(t,\cdot, v))\nabla v\|_{L^2}
\\ & \leq L_g\|\nabla u-\nabla v\|_{L^2} + L_{g}'\|u-v\|_{L^{\infty}} \|\nabla v\|_{L^2}
\\ & \leq C_n \|u-v\|_{\beta_3},
\end{align*}
where in the last step we used $\|\nabla v\|_{L^2}\leq n$ and Sobolev embedding with $\beta_3\in(3/4,1)$. Similarly, by using \eqref{eq:condgzeroSNS}, one can check
\begin{align}\label{eq:SNSGgrowth}
\begin{aligned}
\nn G(t,u)\nn_{H}&\leq \nn G(t,u) - G(t,0)\nn_{H} + \nn G(t,0)\nn_{H}
\leq C(\|u\|_{H} + 1).
\end{aligned}
\end{align}

{\em Step 3: \eqref{eq:pcoercivityclas1} holds}. By \eqref{eq:TSNScoercivityABpart}, \eqref{eq:SNSGgrowth}, and the elementary estimate $(x+y)^2\leq (1+\varepsilon)x^2+ C_{\varepsilon} y^2$, it is enough to show that
\begin{align}\label{eq:estFSNS}
\langle F(u),u\rangle  & \leq \frac14\|u\|_{V}^2+M(\|u\|_{H}^2 +1).
\end{align}
Recall that $F_1$ and $F_2$ are as in \eqref{eq:choice_ABFG_NS}.
For $F_1$ we have
\begin{align*}
\langle F_1(u),u\rangle & \leq  \|u\|_{V} \|F_1(u)\|_{V^*}
\leq \frac14 \|u\|_{V}^2 + \|F_1(u)\|_{V^*}^2
\leq \frac14 \|u\|_{V}^2 + \int_{\R^3} |u|^2 |\nabla u|^2\, \dd x.
\end{align*}
For $F_2$, using that $\phi_N(x)\geq x-N$ for all $x\geq 0$ gives
\begin{align*}
\langle F_2(u),u\rangle & = -  \langle u, \phi_N(|u|^2) u\rangle
\\ & = -  \int_{\R^3} |u|^2 \phi_N(|u|^2)\, \dd x  + \int_{\R^3} u \cdot \Delta [u\, \phi_N(|u|^2)]\, \dd x
\\ & = -  \int_{\R^3} |u|^2 \phi_N(|u|^2) \, \dd x  - \int_{\R^3} |\nabla u|^2 \phi_N(|u|^2)\, \dd x
- 2\int_{\R^3} |u|^2|\nabla u|^2 \phi_N'(|u|^2) \, \dd x
\\ & \leq - \int_{\R^3} |\nabla u|^2 (|u|^2-N)\, \dd x
\end{align*}
where we used that $\phi_N(x)=x-N$ for $x\geq N+1$.
Combining the estimates for $F_1$ and $F_2$, we obtain \eqref{eq:estFSNS}. It remains to apply Theorems \ref{thm:globalclas} and \ref{thm:contdepdata}.
\end{proof}

\begin{remark}\label{rem:DirichletSNS}
One can also try to consider Dirichlet boundary conditions. When working with unweighted function spaces, this leads to serious difficulties as the noise needs to map into the right function spaces with boundary conditions (after applying the Helmholtz projection), which leads to strange assumptions. Moreover, the Helmholtz does not commute with the differential operators, which make the analysis more involved. Also the spaces are more involved since for $V$ one needs to take the divergence free subspace of $H^2(\Dom)\cap H^1_0(\Dom)$ and for $H$ the divergence free subspace of $H^1_0(\Dom)$. In that way $V^*$ can be identified with the divergence free subspace of $L^2(\Dom)$. Note that the divergence free subspace of $C^\infty_c(\Dom)$ is not dense in $V$, and the dual (with respect to $H$) of the closure of the later space of test functions is not the divergence free subspace of $L^2(\Dom)$ (cf. Example \ref{ex:strong_setting}).
\end{remark}

\begin{remark}
By estimating the $F_1$ term as
\[\langle u, F_1(u)\rangle \leq (1-\delta)\|u\|_{V}^2 + \frac{1}{4(1-\delta)}\int_{\R^3} |u|^2 |\nabla u|^2\, \dd x,\]
for suitable $\delta\in (0,1)$,
and strengthening the stochastic parabolicity condition \eqref{eq:stochparSNS}, one can also consider $g$ with quadratic growth in the $y$-variable (see Subsection \ref{ss:AllenCahn} for a related situation).
\end{remark}

\begin{remark}
In Assumption \ref{ass:TSNS}, the condition $b^j\in W^{1,\infty}(\R^3;\ell^2)$ can be weakened to $b^j\in W^{1,\infty}(\R^d;\ell^2)+ W^{1,3+\delta}(\R^3;\ell^2)$ for some $\delta>0$. The reader is referred to the proof of Theorem \ref{thm:AC} below for details.
\end{remark}

\subsection{Stochastic second order equations}\label{ss:second}
Below we consider a second order problem with a gradient noise term, and nonlinearities $f$ and $g$ which does not need to be of linear growth. These type of equations have been considered in many previous works, and below we merely indicate how far one can get using our improved variational framework. In particular, with our techniques one can extend the class of examples in \cite[Example 5.1.8]{LR15} in several ways. We will only deal with the weak setting (see Example \ref{ex:weaksetting}). The strong setting (see Example \ref{ex:strong_setting}) will be considered in Sections \ref{ss:AllenCahn} and \ref{ss:quasi} to treat the Allen-Cahn equation for $d\in \{2, 3, 4\}$ and a quasi-linear problem for $d=1$.

On an open and bounded domain $\Dom\subseteq \R^d$, we consider
\begin{align}
\label{eq:reaction_diffusion}
\left\{
\begin{aligned}
\dd u  &= \big[\div(a\cdot\nabla u)  + f(\cdot, u) + \div(\of(\cdot, u))\big]\, \dd t
\\  & \qquad \qquad \qquad \qquad
+ \sum_{n\geq 1}  \big[(b_{n}\cdot \nabla) u+ g_{n}(\cdot,u) \big]\, \dd w^n_t,\ \ &\text{ on }\Dom,\\
u &= 0, \ \ &\text{ on }\partial\Dom,
\\
u(0)&=u_{0},\ \ &\text{ on }\Dom,
\end{aligned}\right.
\end{align}
where $(w^n_t\,:\,t\geq 0)_{n\geq 1}$ are independent standard Brownian motions.

\begin{assumption}\label{ass:2ndorder1}
Suppose that
\[
\rho_1 \in \left\{
\begin{aligned}
&[0,3] & \text{if }& d=1,\\
& [0,2) & \text{if }& d=2,\\
& \Big[0,\frac{4}{d}\Big] & \text{if }& d\geq 3,
\end{aligned}\right.
\qquad \text{and} \qquad \rho_2,\rho_3\in \Big[0,\frac{2}{d}\Big],
\]
and
\begin{enumerate}[{\rm(1)}]
 \item\label{ass:2ndorder11} $\am^{j,k}:\R_+\times \O\times \Dom\to \R$ and $b^j:=(\bm^{j}_{n})_{n\geq 1}:\R_+\times \O\times \Dom\to \ell^2$ are $\Progress\otimes \Borel(\Tor^d)$-measurable and uniformly bounded.
\item\label{ass:2ndorder12}
There exists $\ellip>0$ such that a.e.\ on $ \R_+\times \O\times \Dom$,
$$
\sum_{j,k=1}^d \Big(a^{j,k}(t,x)-\frac12\sum_{n\geq 1} b^j_{n}(t,x)b^k_{n}(t,x)\Big)
 \xi_j \xi_k
\geq  \ellip |\xi|^2 \quad \text{ for all }\xi\in \R^d.
$$
\item\label{it:growth_nonlinearities}
The mappings $f:\R_+\times \O\times \Dom\times \R\to \R$, $\of:\R_+\times \O\times \Dom\times \R\to \R^d$ and $g:=(g_{n})_{n\geq 1}:\R_+\times \O\times \Dom\times \R\to \ell^2$,
are $\Progress\otimes \Borel(\Dom)\otimes \Borel(\R)$-measurable, and there is a constant $C$ such that a.e.\ on $\R_+\times \O\times \Dom$ and $y\in\R$,
\begin{align*}
 |f(\cdot,y)-f(\cdot,y')|
& \leq C(1+|y|^{\rho_1}+|y'|^{\rho_1})|y-y'|,
\\ |\of(\cdot,y)-\of(\cdot,y')|
& \leq C(1+|y|^{\rho_2}+|y'|^{\rho_2})|y-y'|,
\\ \|g(\cdot,y)-g(\cdot,y')\|_{\ell^2}
&\leq C(1+|y|^{\rho_3}+|y'|^{\rho_3})|y-y'|,
\\ |f(\cdot,y)|&\leq C(1+|y|^{\rho_1+1}),
\\ |\of(\cdot,y)|& \leq C (1+|y|^{\rho_2+1}),
\\ \|g(\cdot,y)\|_{\ell^2} &\leq C(1+|y|)^{\rho_3+1}. \ \
\end{align*}
\item\label{it:secondcoercity} There exist $M,C\geq 0$ and $\eta>0$ such that a.e.\ in $\R_+\times\Omega$ for all $u\in C^\infty_c(\Dom)$
\begin{align*}
&(\am \nabla u, \nabla u)_{L^2(\Dom)} + (\of(\cdot, u), \nabla u)_{L^2(\Dom)} -  (f(\cdot, u),u)_{L^2(\Dom)}  \\
 & -(\tfrac12+\eta)  \sum_{n\geq 1} \|(b_n \cdot \nabla) u + g_n(\cdot, u)\|_{L^2(\Dom)}^2
\geq  \theta \|\nabla u\|^2_{L^2(\Dom)} - M\|u\|^2_{L^2(\Dom)} -C.
\end{align*}
\end{enumerate}
\end{assumption}
Condition \eqref{it:secondcoercity} is technical, but can be seen as a direct translation of the coercivity condition \eqref{eq:pcoercivityclas1}. Simpler sufficient conditions will be give in Examples \ref{ex:seconddiff1} and \ref{ex:seconddiff2} below. Moreover, some simplification will also be discussed in Lemma \ref{lem:simplifcoersecond}.

In order to formulate \eqref{eq:reaction_diffusion} as \eqref{eq:SEE} we set $U = \ell^2$, $H = L^2(\Dom)$, $V = H^1_0(\Dom)$ and $V^* = H^{-1}(\Dom)$. Note that for $\beta\in [1/2,1)$, $V_{\beta} = [V, V^*]_{\beta} \hookrightarrow  H^{2\beta-1}(\Dom)$ (see Example \ref{ex:weaksetting}).

Let $A_0:\R_+\times\Omega\to \calL(V,V^*)$ and $B_0:\R_+\times \Omega\to \calL(V, \calL_2(U,H))$ are given by
\begin{align*}
 A_0(t) u &= \div(a(t,\cdot)\cdot \nabla u),
\\ (B_0(t) u)_n & = (b_{n}(t,\cdot)  \cdot \nabla) u.
\end{align*}
Let $F = F_1+F_2$, where $F_1,F_2:\R_+\times\Omega\times V\to V^*$ and $G:\R_+\times\Omega\times V\to \calL_2(U,H)$ be given by
\begin{align*}
F_1(t,u)(x) &= f(t,x,u(x)), \qquad F_2(t,u)(x) = \div [\of(t,x,u(x))], \\  (G(t,u))_n(x) &= g_n(t,x,u(x)).
\end{align*}
We say that $u$ is a solution to \eqref{eq:reaction_diffusion} if
$u$ is a solution to \eqref{eq:SEE} with the above definitions.

In order to prove global well-posedness we check the conditions of Theorem \ref{thm:globalclas}. It is standard to check that Assumption \ref{ass:condFG}\eqref{it:condFG0}-\eqref{it:condFG1} are satisfied. To check \eqref{it:condFG2} we only consider the local Lipschitz estimates, since the growth conditions can be checked in the same way. Note that for $F_1$ we have
\begin{align*}
\|&F_1(t,u) - F_1(t,v)\|_{V^*} \lesssim \|F_1(t,u)-F_1(t,v)\|_{L^r(\Dom)}   & \text{(by Sobolev embedding)}
\\ & \lesssim \|(1+|u|^{\rho_1}+|v|^{\rho_1}) (u-v)\|_{L^{r}(\Dom)} & \text{(by Assumption \ref{ass:2ndorder1})}
\\ & \lesssim (1+\|u\|^{\rho_1}_{L^{r(\rho_1+1)}(\Dom)}+\|v\|^{\rho_1}_{L^{r(\rho_1+1)}(\Dom)} )\|u-v\|_{L^{r(\rho_1+1)}(\Dom)} & \text{(by H\"older's inequality)}
\\ & \lesssim (1+\|u\|^{\rho_1}_{\beta_1}+\|v\|^{\rho_1}_{\beta_1})  \|u-v\|_{\beta_1} & \text{(by Sobolev embedding).}
\end{align*}
First let $d\geq 3$. In the first Sobolev embedding we choose $r\in (1, \infty)$ such that $-\frac{d}{r} = -1-\frac{d}{2}$. The second Sobolev embedding requires
$2\beta_1 -1- \frac{d}{2} \geq -\frac{d}{r(\rho_1+1)} = -\frac{1}{\rho_1+1} - \frac{d}{2(\rho_1+1)}$. In Assumption \ref{ass:condFG}\eqref{it:condFG2}, we need $2\beta_1\leq 1 + \frac{1}{\rho_1+1}$ and hence
\[\frac{d}{2}-\frac{1}{\rho_1+1} - \frac{d}{2(\rho_1+1)} \leq \frac{1}{\rho_1+1}.\]
The latter is equivalent to $\rho_1\leq \frac{4}{d}$. If $d=1$, then we can take $r=1$, and this leads to
\[\frac{3}{2} -\frac{1}{\rho_1+1} \leq 2\beta_1 \leq 1 + \frac{1}{\rho_1+1},\]
which holds if $\rho_1\leq 3$. If $d=2$, we can take $r = 1+\delta$ for any $\delta>0$, and this leads to $\rho_1<2$. In all cases we can take $2\beta_1 = 1 + \frac{1}{\rho_1+1}$.

To prove the estimates for $\of$ we argue similarly:
\begin{align*}
&\|F_2(t,u) - F_2(t,v)\|_{V^*} \lesssim \|\of(t,\cdot,u) - \of(t,\cdot,v)\|_{L^2(\Dom)} \\ & \lesssim \|(1+|u|^{\rho_2}+|v|^{\rho_2}) (u-v)\|_{L^{2}(\Dom)} & \text{(by Assumption \ref{ass:2ndorder1})}
\\ & \lesssim (1+\|u\|^{\rho_2}_{L^{2(\rho_2+1)}(\Dom)}+\|v\|^{\rho_2}_{L^{2(\rho_2+1)}(\Dom)}) \|u-v\|_{L^{2(\rho_2+1)}(\Dom)} & \text{(by H\"older's inequality)}
\\ & \lesssim (1+\|u\|^{\rho_2}_{\beta_2}+\|v\|^{\rho_2}_{\beta_2})  \|u-v\|_{\beta_2} & \text{(by Sobolev embedding).}
\end{align*}
In the Sobolev embedding we need $2\beta_2 -1- \frac{d}{2} \geq -\frac{d}{2(\rho_2+1)}$. In Assumption \ref{ass:condFG}\eqref{it:condFG2}, the condition $2\beta_2\leq 1 + \frac{1}{\rho_2+1}$ leads to $\rho_2\leq \frac{2}{d}$.
Moreover, we can consider the critical case $2\beta_2 = 1+\frac{1}{\rho_2+1}$.

To prove the estimates for $G$ we argue similarly:
\begin{align*}
&\|G(t,u) - G(t,v)\|_{L^2(\Dom;\ell^2)}  \\ & \lesssim \|(1+|u|^{\rho_3}+|v|^{\rho_3}) (u-v)\|_{L^{2}(\Dom)} & \text{(by Assumption \ref{ass:2ndorder1})}
\\ & \lesssim (1+\|u\|^{\rho_3}_{L^{2(\rho_3+1)}(\Dom)}+\|v\|^{\rho_3}_{L^{2(\rho_3+1)}(\Dom)} )\|u-v\|_{L^{2(\rho_3+1)}(\Dom)} & \text{(by H\"older's inequality)}
\\ & \lesssim (1+\|u\|^{\rho_3}_{\beta_3}+\|v\|^{\rho_3}_{\beta_3})  \|u-v\|_{\beta_3} & \text{(by Sobolev embedding).}
\end{align*}
As before we need $2\beta_3 -1- \frac{d}{2} \geq -\frac{d}{2(\rho_3+1)}$ and $\rho_3\leq \frac{2}{d}$.
Moreover, we can take $2\beta_3 = 1+\frac{1}{\rho_3+1}$.

Finally we note that the coercivity condition \eqref{eq:pcoercivityclas1} coincides with Assumption \ref{ass:2ndorder1}\eqref{it:secondcoercity}.
Therefore, Theorem \ref{thm:globalclas} gives the following:

\begin{theorem}[Global well-posedness]
\label{thm:second}
Suppose that Assumption \ref{ass:2ndorder1} holds. Let $u_0\in L^0_{\F_0}(\Omega; L^2(\Dom))$.
Then \eqref{eq:reaction_diffusion} has a unique global solution
\begin{equation}\label{eq:solspacesecond}
u\in C([0,\infty);L^2(\Dom))\cap L^2_{\rm loc}([0,\infty);H^1_0(\Dom)) \ a.s.
\end{equation}
and for every $T>0$ there is a constant $C_T$ independent of $u_0$ such that
\begin{align}\label{eq:aprioriLpsec}
\E \|u\|_{C([0,T];L^2(\Dom))}^2 + \E\|u\|_{L^2(0,T;H^1_0(\Dom))}^2\leq C_T(1+\E\|u_0\|_{L^2(\Dom)}^2).
\end{align}
Moreover, $u$ depends  continuously on the initial data $u_0$ in probability in the sense of Theorem \ref{thm:contdepdata} with $H=L^2(\Dom)$ and $V=H^{1}_0(\Dom)$.
\end{theorem}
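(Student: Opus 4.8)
The plan is to reduce everything to the abstract results already established: with $H=L^2(\Dom)$, $V=H^1_0(\Dom)$, $V^*=H^{-1}(\Dom)$ and the operators $A_0,B_0,F_1,F_2,G$ as defined above, I would verify that the pair $(A,B)=(A_0+F_1+F_2,\,B_0+G)$ satisfies Assumption~\ref{ass:condFG} together with the coercivity hypothesis of Theorem~\ref{thm:globalclas}, and then invoke Theorems~\ref{thm:globalclas} and \ref{thm:contdepdata}. Almost all of the analytic content is contained in the H\"older/Sobolev computations displayed just before the statement; the proof only has to collect them.

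First I would dispatch Assumption~\ref{ass:condFG}\eqref{it:condFG0}: the quasi-linear splitting holds by construction (with vanishing abstract inhomogeneities $f\equiv 0$, $g\equiv 0$), and the $\Progress\otimes\mathcal{B}(H)$- resp.\ $\Progress\otimes\mathcal{B}(V)$-measurability of $A_0,B_0$ and of the Nemytskii operators $F_1,F_2,G$ follows from the Carath\'eodory assumptions in Assumption~\ref{ass:2ndorder1}\eqref{ass:2ndorder11},\eqref{it:growth_nonlinearities}. Next, since $A_0,B_0$ do not depend on the lower-order variable, the quasi-linear Lipschitz bounds in \eqref{it:condFG2} are vacuous, and Assumption~\ref{ass:condFG}\eqref{it:condFG1} holds globally: integrating by parts (the boundary term vanishes on $H^1_0(\Dom)$) and using the uniform ellipticity Assumption~\ref{ass:2ndorder1}\eqref{ass:2ndorder12},
\[
\lb u,A_0u\rb-\tfrac12\nn B_0u\nn_H^2
=\int_{\Dom}\sum_{j,k=1}^d\Big(a^{j,k}-\tfrac12\sum_{n\ge 1}b_n^jb_n^k\Big)\partial_ju\,\partial_ku\,dx
\ \ge\ \ellip\|\nabla u\|_{L^2(\Dom)}^2,
\]
which, by the Poincar\'e inequality on the bounded set $\Dom$, dominates $c\|u\|_V^2$ for some $c>0$; this gives \eqref{it:condFG1} with $M_n=0$ and $\theta_n=c$.

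Then I would carry out the verification of the Lipschitz and growth estimates of Assumption~\ref{ass:condFG}\eqref{it:condFG2} for $F_1,F_2,G$ exactly as indicated above: combine the pointwise bounds of Assumption~\ref{ass:2ndorder1}\eqref{it:growth_nonlinearities}, H\"older's inequality and the embeddings $V_{\beta}=[V^*,V]_\beta\hookrightarrow H^{2\beta-1}(\Dom)$ (valid on an arbitrary open $\Dom$ by boundedness of the zero extension, cf.\ Example~\ref{ex:weaksetting} and \eqref{eq:reiteration}). For each $i\in\{1,2,3\}$ one picks the \emph{critical} exponent $2\beta_i=1+\tfrac1{\rho_i+1}$, and the admissible ranges of $\rho_1,\rho_2,\rho_3$ listed in Assumption~\ref{ass:2ndorder1} are precisely those for which the required Sobolev threshold $H^{2\beta_i-1}(\Dom)\hookrightarrow L^{r(\rho_i+1)}(\Dom)$ is available ($r=1$ if $d=1$, $r=1+\delta$ if $d=2$, and $-d/r=-1-\tfrac d2$ if $d\ge 3$ for $F_1$; $r=2$ for $F_2$ and $G$); the same estimates also produce the $V^*$- resp.\ $\calL_2(U,H)$-growth bounds with exponent $\rho_i+1$. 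This dimension-by-dimension exponent check is the only step requiring genuine care, and it is exactly where the lists of admissible $\rho_j$ originate.

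Finally I would observe that, once the definitions of $A$ and $B$ are unwound, the coercivity condition \eqref{eq:pcoercivityclas1} of Theorem~\ref{thm:globalclas} is a direct restatement of Assumption~\ref{ass:2ndorder1}\eqref{it:secondcoercity} (with $|\phi|^2$ the constant on its right-hand side and the same $\eta>0$). Hence all hypotheses of Theorem~\ref{thm:globalclas} are met, which yields the unique global solution in the class \eqref{eq:solspacesecond} together with the a priori bound \eqref{eq:aprioriLpsec}; since the hypotheses of Theorem~\ref{thm:contdepdata} coincide with those just checked, the asserted continuous dependence on $u_0$ in probability, with $H=L^2(\Dom)$ and $V=H^1_0(\Dom)$, follows as well.
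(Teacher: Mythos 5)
Your proposal follows exactly the route the paper takes: identify $H=L^2(\Dom)$, $V=H^1_0(\Dom)$, $V^*=H^{-1}(\Dom)$, verify Assumption~\ref{ass:condFG}\eqref{it:condFG0}--\eqref{it:condFG1} directly from ellipticity/stochastic parabolicity (Assumption~\ref{ass:2ndorder1}\eqref{ass:2ndorder11}--\eqref{ass:2ndorder12}), establish the local Lipschitz/growth bounds for $F_1$, $F_2$, $G$ via H\"older and Sobolev embedding in the weak setting (with the dimension-dependent choice of $r$, and the critical $2\beta_j=1+\tfrac1{\rho_j+1}$), identify \eqref{eq:pcoercivityclas1} with Assumption~\ref{ass:2ndorder1}\eqref{it:secondcoercity}, and conclude by Theorems~\ref{thm:globalclas} and \ref{thm:contdepdata}. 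This matches the paper's argument in substance and structure, so the proposal is correct.
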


\begin{remark}\label{rem:eta0second}
If Assumption \ref{ass:2ndorder1} holds with $\eta=0$, then a version of Theorem \ref{thm:second} still holds, but with \eqref{eq:aprioriLpsec} replaced by \eqref{eq:aprioriVH1}, \eqref{eq:aprioriVH3}, and \eqref{eq:aprioriVH2}. Indeed, instead one can apply
Theorem \ref{thm:globaleta0}.
\end{remark}

In the next lemma we further simplify some of the terms appearing in Assumption \ref{ass:2ndorder1}\eqref{it:secondcoercity} in special cases.
\begin{lemma}\label{lem:simplifcoersecond}
Suppose that Assumption \ref{ass:2ndorder1} holds. Suppose that $\of$ and $g$ only depends on $(t,\omega,y)$, and $b$ only depends on $(t,x,\omega)$ and additionally $\div(b) = 0$ in distributional sense. Then for al $u\in C^\infty_c(\Dom)$
\begin{enumerate}[{\rm(1)}]
\item\label{it:simplifcoersecond1} $(\of(\cdot, u), \nabla u)_{L^2(\Dom)} = 0$;
\item\label{it:simplifcoersecond2} $((b_n \cdot \nabla) u, g_n(\cdot, u))_{L^2(\Dom)} = 0$.
\end{enumerate}
Therefore, Assumption \ref{ass:2ndorder1}\eqref{it:secondcoercity} holds if there exist $M,C\geq 0$ and $\eta>0$ such that a.e.\ in $\R_+\times\Omega$ for all $y\in \R$,
\begin{align}\label{eq:fgsecondpoint}
  (f(\cdot, y),y)   + (\tfrac12+\eta) \|g(\cdot, y)\|_{\ell^2}^2
\leq   M|y|^2 +C.
\end{align}
\end{lemma}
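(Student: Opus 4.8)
The plan is to establish the two orthogonality identities \eqref{it:simplifcoersecond1}--\eqref{it:simplifcoersecond2} by passing to primitives and integrating by parts, and then to substitute them into Assumption~\ref{ass:2ndorder1}\eqref{it:secondcoercity}; there the square $\|(b_n\cdot\nabla)u+g_n(\cdot,u)\|_{L^2(\Dom)}^2$ decouples thanks to \eqref{it:simplifcoersecond2}, and the leftover second-order term is absorbed by the ellipticity condition \eqref{ass:2ndorder12}.

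For \eqref{it:simplifcoersecond1} I fix $(t,\omega)$ outside a null set where $\of(t,\omega,\cdot)$ is continuous (this follows from the local Lipschitz bound in Assumption~\ref{ass:2ndorder1}\eqref{it:growth_nonlinearities}). For each $k\in\{1,\dots,d\}$ I set $\Phi^k(t,\omega,y):=\int_0^y\of^k(t,\omega,s)\,ds$, so that $\Phi^k(t,\omega,\cdot)\in C^1(\R)$ with $\Phi^k(t,\omega,0)=0$ and $(\Phi^k)'=\of^k$. For $u\in C^\infty_c(\Dom)$ the chain rule gives $\partial_k[\Phi^k(t,\omega,u(\cdot))]=\of^k(t,\omega,u)\,\partial_k u$, while $x\mapsto\Phi^k(t,\omega,u(x))$ is $C^1$ and supported in $\supp u\Subset\Dom$; hence $\int_\Dom\of^k(t,\omega,u)\,\partial_k u\,dx=\int_\Dom\partial_k[\Phi^k(t,\omega,u)]\,dx=0$, and summing over $k$ yields $(\of(\cdot,u),\nabla u)_{L^2(\Dom)}=0$.

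For \eqref{it:simplifcoersecond2} I argue in the same spirit with $G_n(t,\omega,y):=\int_0^y g_n(t,\omega,s)\,ds$, which is $C^1$ in $y$ with $G_n(t,\omega,0)=0$; then $x\mapsto G_n(t,\omega,u(x))\in C^1_c(\Dom)$ with $\partial_j[G_n(t,\omega,u)]=g_n(t,\omega,u)\,\partial_j u$, so that
\[
((b_n\cdot\nabla)u,g_n(\cdot,u))_{L^2(\Dom)}=\sum_{j=1}^d\int_\Dom b_n^j\,\partial_j[G_n(\cdot,u)]\,dx=\int_\Dom b_n\cdot\nabla[G_n(\cdot,u)]\,dx.
\]
Since $\div b_n=0$ in the distributional sense and $b_n\in L^\infty$, approximating the compactly supported $C^1$ function $G_n(t,\omega,u(\cdot))$ in $W^{1,1}(\Dom)$ by functions in $C^\infty_c(\Dom)$ with supports in a fixed compact subset of $\Dom$ shows that this last integral vanishes, which gives \eqref{it:simplifcoersecond2}. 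This passage through the distributional formulation is the only genuinely delicate point, since $b_n$ cannot be differentiated pointwise.

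Finally, to obtain Assumption~\ref{ass:2ndorder1}\eqref{it:secondcoercity} from \eqref{eq:fgsecondpoint}, I insert \eqref{it:simplifcoersecond1} into the left-hand side of \eqref{it:secondcoercity}, expand $\|(b_n\cdot\nabla)u+g_n(\cdot,u)\|_{L^2(\Dom)}^2$, and drop the cross terms by \eqref{it:simplifcoersecond2}, so that $\sum_{n\geq1}\|(b_n\cdot\nabla)u+g_n(\cdot,u)\|_{L^2(\Dom)}^2=\sum_{n\geq1}\|(b_n\cdot\nabla)u\|_{L^2(\Dom)}^2+\|g(\cdot,u)\|_{L^2(\Dom;\ell^2)}^2$. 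Writing $\sum_{n\geq1}\|(b_n\cdot\nabla)u\|_{L^2(\Dom)}^2=\int_\Dom\sum_{j,k}\sigma^{jk}\,\partial_j u\,\partial_k u\,dx$ with $\sigma^{jk}=\sum_{n\geq1}b_n^j b_n^k$, the ellipticity condition \eqref{ass:2ndorder12} yields $(\am\nabla u,\nabla u)_{L^2(\Dom)}-\tfrac12\sum_{n\geq1}\|(b_n\cdot\nabla)u\|_{L^2(\Dom)}^2\geq\ellip\|\nabla u\|_{L^2(\Dom)}^2$, whereas $\eta\sum_{n\geq1}\|(b_n\cdot\nabla)u\|_{L^2(\Dom)}^2\leq\eta\|b\|_{L^\infty}^2\|\nabla u\|_{L^2(\Dom)}^2$. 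Shrinking $\eta$ if necessary so that $\theta:=\ellip-\eta\|b\|_{L^\infty}^2>0$ — which costs nothing since \eqref{eq:fgsecondpoint} stays true for smaller $\eta$ — the left-hand side of \eqref{it:secondcoercity} is bounded below by $\theta\|\nabla u\|_{L^2(\Dom)}^2-\int_\Dom\big(f(\cdot,u)\,u+(\tfrac12+\eta)\|g(\cdot,u)\|_{\ell^2}^2\big)\,dx$, and evaluating \eqref{eq:fgsecondpoint} at $y=u(x)$ and integrating over $\Dom$ gives the required estimate with the same $M$ and the constant $C|\Dom|$. Apart from the integration by parts in \eqref{it:simplifcoersecond2}, everything is routine bookkeeping around the primitives and the $\eta$-dependence.
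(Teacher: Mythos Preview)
Your proof is correct and follows essentially the same approach as the paper: primitives of $\of$ and $g$ combined with the chain rule for \eqref{it:simplifcoersecond1}--\eqref{it:simplifcoersecond2}, and then decoupling the square via \eqref{it:simplifcoersecond2}, absorbing the gradient-noise part into the ellipticity of Assumption~\ref{ass:2ndorder1}\eqref{ass:2ndorder12} after shrinking $\eta$, and applying \eqref{eq:fgsecondpoint} pointwise. Your treatment of \eqref{it:simplifcoersecond2} via approximation of $G_n(\cdot,u)\in C^1_c(\Dom)$ by $C^\infty_c(\Dom)$ functions is in fact a bit more careful than the paper's, which writes a formal integration-by-parts identity with a boundary term and a bulk $\div(b_n)$ term even though $b_n$ is only assumed bounded.
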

\begin{proof}
By extending $u$ as zero we may assume that $\Dom$ is an open ball. In particular, this gives that $\Dom$ is a smooth domain.

\eqref{it:simplifcoersecond1}:
Let $$\mathcal{F}(t,y):=\int_0^{y} \of(t,y')\, \dd y', \quad y\in \R.$$
Since $\of$ is continuous, by the chain rule we obtain that for $u\in C^\infty_c(\Dom)$
$$
\div_x\big[ \mathcal{F}(u(x))\big]
= \of(u(x))\cdot \nabla u
$$
By the divergence theorem and the fact that $u=0$ on $\partial O$, we obtain
\[\int_{\Dom} \of(u(x))\cdot \nabla u \, \dd x = \int_{\Dom} \div_x\big[ \mathcal{F}(u(x))\big] \, \dd x
= \int_{\partial \Dom} F(u(x))\cdot n(x) \, \dd  S(x)=0.\]
Therefore the stated result follows.

\eqref{it:simplifcoersecond2}: We use a similar idea. Set
\[\mathcal{G}_n(t,y):= \int_0^y g_n(t,y')\, \dd y', \quad y\in \R.\]
Then the chain rule gives that
$$
\partial_{x_j}\big[\mathcal{G}_n(\cdot,x,u(x))\big]
=  g_n(\cdot,x,u(x))\partial_j u(x).
$$
Integrating by parts and arguing as before, we find that $((b_n \cdot \nabla) u, g_n(\cdot, u))_{L^2(\Dom)}$ can be written as
\begin{align*}
\int_{\partial \Dom} b_n \cdot n \, \mathcal{G}_n(\cdot, u(x)) \, \dd S(x)
 -\int_{\Dom} \div(b_n) \mathcal{G}_n(\cdot, u(x)) \, \dd x = 0
\end{align*}
where in the last equality we used $\div\,b_n=0$ that $\mathcal{G}_n(\cdot, u(x))|_{\partial\Dom}=0$ as $u|_{\partial\Dom}=0$ and $\mathcal{G}(\cdot,0)=0$.

For the final assertion note that by \eqref{it:simplifcoersecond1} and \eqref{it:simplifcoersecond2}, Assumption \ref{ass:2ndorder1}\eqref{it:secondcoercity} becomes
\begin{align*}
&(\am \nabla u, \nabla u)_{L^2(\Dom)} -(\tfrac12+\eta)  \|(b_n \cdot \nabla) u\|_{L^2(\Dom;\ell^2)}^2 \\ & -  (f(\cdot, u),u)_{L^2(\Dom)}
  -(\tfrac12+\eta)  \|g(\cdot, u)\|_{L^2(\Dom;\ell^2)}^2  \geq  \theta \|\nabla u\|^2_{L^2(\Dom)} - M\|u\|^2_{L^2(\Dom)} -C.
\end{align*}
By Assumption \ref{ass:2ndorder1} for $\eta$ small enough we can find $\theta>0$ such that
\[(\am \nabla u, \nabla u)_{L^2(\Dom)} -(\tfrac12+\eta)  \|(b_n \cdot \nabla) u\|_{L^2(\Dom;\ell^2)}\geq  \theta\|\nabla u\|_{L^2(\Dom)}.\]
Thus it remains to check
\[-  (f(\cdot, u),u)_{L^2(\Dom)}  -(\tfrac12+\eta)  \|g(\cdot, u)\|_{L^2(\Dom;\ell^2)}^2  \geq   - M\|u\|^2_{L^2(\Dom)} -C.\]
The latter follows from \eqref{eq:fgsecondpoint}.
\end{proof}

Next we specialize to the setting of the generalized Burgers equation of \cite[Example 5.1.8]{LR15}. It turns out that our setting leads to more flexibility under the mild restriction that the nonlinearities are locally Lipschitz with some polynomial growth estimate on the constants. Basically the natural restriction in our setting is given in \eqref{eq:dissLR} below which says that $y f(y)\leq M(1+|y|^2)$ which is weaker than the usual one-sided Lipschitz condition used for the local monotonicity. Moreover, we can allow a gradient noise term. Further comparison can be found in Remark \ref{rem:localmon} below. For convenience we only consider coefficients which are independent of $(t,\omega,x)$, but in principle this is not needed.
\begin{example}\label{ex:seconddiff1}
Let $d\geq 1$ and let $\Dom$ be a bounded $C^1$-domain. Consider the problem
\begin{equation}
\label{eq:reaction_diffusionexdiff1}
\left\{
\begin{aligned}
\dd u  &= [\Delta u+ f(u) + \div(\of(u))] \, \dd t
+ \sum_{n\geq 1} \big[(b_n\cdot \nabla)u + g_{n}(u)\big] \, \dd w_t^n,&\quad & \text{ on }\Dom,\\
u &= 0& \quad & \text{ on }\partial\Dom,
\\
u(0)&=u_{0},&\quad & \text{ on }\Dom.
\end{aligned}\right.
\end{equation}
Here $(b_n)_{n\geq 1}$ are real numbers such that stochastic parabolicity condition holds:
\[\theta := 1 - \frac12\|(b_n)_{n\geq 1}\|_{\ell^2}^2>0.\]
For the nonlinearities we assume that there is a constant $C\geq 0$ such that
\begin{align*}
|f(y)-f(y')| & \leq C (1+|y|^{\rho_1}+|y'|^{\rho_1})|y-y'|,
\\ |\of(y)-\of(y')| & \leq C (1+|y|^{\rho_2}+|y'|^{\rho_2})|y-y'|,
\\ |f(y)|&\leq C(1+|y|^{\rho_1+1}),
\\ |\of(y)|& \leq C(1+|y|^{\rho_2+1}),
\\ \|g(y)-g(y')\|_{\ell^2}&\leq C|y-y'|,
\\ \|(g_n(y))_{n\geq 1}\|_{\ell^2} &\leq C(1+|y|),
\end{align*}
where $\rho_1\in [0,\min\{\frac{4}{d},3\}]$ if $d\neq 2$, $\rho_1\in [0,2)$ if $d=2$, and $\rho_2\in (0,\frac{2}{d}]$ (cf.\ Assumption \ref{ass:2ndorder1}).
Suppose that the following dissipativity condition holds: there is an $M\geq 0$ such that
\begin{align}\label{eq:dissLR}
y f(y)\leq M(1+|y|^2).
\end{align}
In particular, if $d\in \{1, 2\}$, Burgers type nonlinearities are included: take $\of(y) = y^2$. Moreover, if $d=1$, Allen-Cahn type nonlinearities such as $f(y) = y-y^3$ are included as well (see Section \ref{ss:AllenCahn} for $d\in \{2, 3, 4\}$).

One can check that Assumption \ref{ass:2ndorder1} is satisfied (see Lemma \ref{lem:simplifcoersecond}). Thus Theorem \ref{thm:second} implies that for every $u_0\in L^0_{\F_0}(\Omega; L^2(\Dom))$, there exists a unique global solution $u$ to \eqref{eq:reaction_diffusionexdiff1} which satisfies \eqref{eq:solspacesecond} and \eqref{eq:aprioriLpsec}.

\end{example}
\begin{remark}\label{rem:localmon}
For comparison let us note that the usual local monotonicity condition would require $b = 0$, and the more restrictive one-sided Lipschitz estimate
\begin{align*}
(f(y)  - f(y'))(y-y')\leq C(1+|y'|^s) (y-y')^2, \  \ x,y\in \R.
\end{align*}
Note that setting $y'=0$, the latter implies \eqref{eq:dissLR}.
Concerning the growth rate at infinity of order $|\cdot|^{\rho_i+1}$ in our setting and say the $|\cdot|^{\overline{\rho}_i+1}$ in the classical variational setting (see \cite[Example 5.1.8]{LR15}), we make a comparison in Table \ref{ta:comp}.
In particular, in $d=2$ the Burgers equation is included in our setting, but not in the classical variational framework.
\begin{table}[h]
\renewcommand{\arraystretch}{1.25}
\begin{tabular}{|c|c|c|c|}
  \hline
   & $d=1$ & $d=2$ & $d=3$ \\
  \hline
  $\rho_1$ & $3$ & $<2$ & $\frac43$\\
  $\overline{\rho}_1$ & $2$ & $<2$ & $\frac43$ \\
  $\rho_2$ & $2$ & $1$ & $\frac{2}{3}$\\
  $\overline{\rho}_2$ & $2$ & $0$ & $0$ \\
  \hline
\end{tabular}
\vspace{0.2cm}
\caption{Our setting $\rho_i$; variational setting $\overline{\rho}_i$.}
\label{ta:comp}
\end{table}
\end{remark}

The coercivity condition of Assumption \ref{ass:2ndorder1}\eqref{it:secondcoercity} can be seen as a combined dissipativity condition on the nonlinearities $f$ and $g$: better $f$ gives less restrictions on $g$. Below, we give an explicit case in which it applies, where for simplicity we take $\of = 0$ and $b_n=0$.
\begin{example}\label{ex:seconddiff2}
Let $d\geq 1$ and suppose that $\rho\in [0,\min\{\frac{4}{d},3\}]$ if $d\neq 2$ and $\rho\in [0,2)$ if $d=2$. Let $\lambda>0$. Consider the problem
\begin{equation}
\label{eq:reaction_diffusionexdiff2}
\left\{
\begin{aligned}
\dd u &=\big[ \Delta u -\lambda |u|^{\rho} u \big]\, \dd t
+ \sum_{n\geq 1}  g_{n}(u) \, \dd w_t^n,&\quad &\text{ on }\Dom,\\
u &= 0 &\quad & \text{ on $\partial \Dom$},
\\
u(0)&=u_{0},&\quad &\text{ on }\Dom.
\end{aligned}\right.
\end{equation}
Let $f(y) = -\lambda|u|^{\rho} u$, and suppose $g:\R\to \ell^2$ is such that for some $\eta,C>0$
\begin{align}
 \nonumber \|g(y) - g(y')\|^2_{\ell^2}&\leq C(1+|y|^{\rho}+|y'|^{\rho})|y-y'|^2,
\\ \label{eq:coercsecondex}(\tfrac12+\eta) \|g(y)\|^2_{\ell^2}&\leq C(1+|y|^2) + \lambda|y|^{\rho+2},
\end{align}
where $y,y'\in \R$.
Then Theorem \ref{thm:second} implies that \eqref{eq:reaction_diffusionexdiff2} has a unique global solution $u$ as in \eqref{eq:solspacesecond} and \eqref{eq:aprioriLpsec} holds. Indeed, Assumptions \ref{ass:2ndorder1}\eqref{ass:2ndorder11},\eqref{ass:2ndorder12},\eqref{it:growth_nonlinearities}  are clearly satisfied with $\rho_1 = \rho$ and $\rho_3 = \rho/2$. For \eqref{it:secondcoercity} it remains to note that one has, a.e.\ on $\R_+\times \O\times \Dom$,
\[-(\tfrac12+\eta)\sum_{n\geq 1}|g_n(\cdot, y)|^2 - y f(y)\geq -C(1+|y|^2), \ \ \   \text{ and for all } y\in \R.\]
In case $\eta=0$, one can use Remark \ref{rem:eta0second} to obtain well-posedness. Here something special occurs in the case $d=1$ and $\rho\in (2, 3]$. In the latter case, \eqref{eq:coercsecondex} can be replaced by: there exists a $C>0$ such that
\[\|g(y)\|^2_{\ell^2} \leq C(1+|y|^2) + C|y|^{4}.\]
\end{example}

\subsection{Stochastic Allen-Cahn equation}\label{ss:AllenCahn}
The Allen–Cahn equation is one of the well-known reaction--diffusion equations of mathematical physics, and it is used to  describe phase transition processes. It is considered by many authors (see for instance \cite{RW13, S00_AC}). Quite often it is considered without gradient noise, but it seems natural to add (see the discussion in \cite[Section 1.3]{AVreaction-local}).

Here we consider the following stochastic Allen-Cahn equation with transport noise on $\Tor^d$:
\begin{align}
\label{eq:allen_cahn}
\left\{
\begin{aligned}
\dd u  & =  \big(\Delta u +u-u^3\big)\, \dd t+ \sum_{n\geq 1}  \big[(b_{n}\cdot \nabla) u+ g_{n}(\cdot,u)\big] \, \dd w^n_t,
&\quad &\text{ on }\Tor^d,
\\
u(0)&=u_{0},&\quad &\text{ on }\Tor^d.
\end{aligned}
\right.
\end{align}
The arguments below also apply if the term $u-u^3$ is replaced by a more general nonlinearity $f(u)$ which behaves like $u-u^3$. See Remark \ref{r:AC_more_general_f} for some comments. The main novelty in our result is that we can consider gradient noise and a quadratic diffusion term $g$, and that the equations fits into our variational framework for dimensions $d\leq 4$.

Table \ref{ta:comp} in Remark \ref{rem:localmon}  shows that \eqref{eq:allen_cahn} cannot be considered in the weak setting  (i.e.\ $H = L^2$, $V = H^1$, and $\rho_1 = 2$) if $d\geq 2$. In the current section we show that one can treat the stochastic Allen-Cahn equation for $2\leq d\leq 4$ by considering the strong setting instead (i.e.\ $V =H^2$ and $H = H^1$). In this setting local monotonicity does not holds, but fortunately local Lipschitz estimates are satisfied.

\begin{assumption}
\label{ass:AC}
Let $b^j=(b_n^j)_{n\geq 1}:\R_+\times \O\times \Tor^d\times \R\to \ell^2$ for $j\in \{1, \ldots,d\}$, and $g=(g_n)_{n\geq 1}:\R_+\times \O\times \Tor^d\times \R\to \ell^2$ be $\Progress\otimes \Borel(\Tor^d)$- and $\Progress\otimes \Borel(\Tor^d)\otimes\Borel( \R)$-measurable maps, respectively. Assume that
\begin{enumerate}[{\rm(1)}]
\item\label{it:AC_1} Suppose that there exist $\nu\in (0,2)$ such that a.e.\ on $\R_+\times\O\times \Tor^d$,
\begin{equation*}
\sum_{n\geq 1} \sum_{i,j=1}^d b_n^i b_n^j \xi_i \xi_j \leq \nu |\xi|^2 \ \ \text{ for all }\xi\in \R^d.
\end{equation*}
\item\label{it:AC_2} There exist $M,\delta>0$ such that for all $j\in \{1,\dots,d\}$ a.s.\ for all $t\in \R_+$,
$$
\|b^j(t,\cdot)\|_{W^{1,d+\delta}(\Tor^d;\ell^2)}\leq M.
$$
\item\label{it:AC_3} the mapping $(x,y)\mapsto g(\cdot,x,y)$ is $C^1(\T^d\times\R)$ a.e.\ on $\R_+\times \O$.
\item\label{it:AC_4} There exists $C\geq 0$ such that a.e.\ on $\R_+\times\O\times \Tor^d$ and for all $y,y'\in \R$,
\begin{align*}
\|\nabla_x g(\cdot,y)\|_{\ell^2}+\|g(\cdot,y)\|_{\ell^2}&\leq C(1+|y|^2),\\
\| \nabla_x g(\cdot,y)-\nabla_x g(\cdot,y')\|_{\ell^2}+\|g(\cdot,y)-g(\cdot,y')\|_{\ell^2}&\leq C(1+|y|+|y'|)|y-y'|,\\
\|\partial_y g(\cdot,y)\|_{\ell^2}&\leq C(1+|y|),\\
\|\partial_y  g(\cdot,y)-\partial_y  g(\cdot,y')\|_{\ell^2}&\leq C|y-y'|.
\end{align*}
\item\label{it:AC_5} There exist $C,\eta,\theta> 0$ such that for all $v\in C^\infty(\Tor^d)$ and a.e.\ on $\R_+\times \O$,
\begin{align*}
(\tfrac{1}{2}+\eta)  \sum_{n\geq 1} \|(b_n \cdot \nabla) v + g_n(\cdot, v)\|_{H^1}^2
&\leq \int_{\Tor^d} (|v|^4 + 3 |v|^2 |\nabla v|^2) \, \dd x\\
 &+ (1-\theta) \int_{\Tor^d} |\Delta v|^2\, \dd x + C (\|v\|^2_{H^1} +1).
\end{align*}
\end{enumerate}
\end{assumption}

Some remarks on Assumption \ref{ass:AC} may be in order. \eqref{it:AC_2} and the Sobolev embedding $H^{1,d+\delta}(\ell^2)\embed L^{\infty}(\ell^2)$ show that $\|b^k\|_{L^{\infty}(\ell^2)}\lesssim M$.
The quadratic growth assumption on $g$ is optimal from a scaling point of view \cite[Section 1]{AVreaction-local}.
\eqref{it:AC_5} is equivalent to the coercivity condition \eqref{eq:pcoercivityclas2}. Note that the conditions
\eqref{it:AC_4} and \eqref{it:AC_5} are compatible as the RHS of the estimate in \eqref{it:AC_5} allows for quadratic growth of $g$. To check \eqref{it:AC_5} it is also convenient to use that
\begin{equation}
\label{eq:isometry_H_2_integration_by_parts}
\sum_{j,k=1} \int_{\Tor^d}|\partial_{j,k}^2 v|^2\, \dd x= \int_{\Tor^d} |\Delta v|^2\, \dd x \ \ \ \text{ for all }v\in  H^2(\T^d).
\end{equation}
The above follows by approximation by smooth functions, and integration by part arguments.

As remarked above, Assumption \ref{ass:AC}\eqref{it:AC_5} allows $g$ with quadratic growth. In special cases, one can even obtain explicit description of constants. For instance, if $b=0$ and $g$ is $x$-independent, then one can check that \eqref{it:AC_5} holds if
\[(\tfrac12+\eta)\|g(y)\|_{\ell^2}^2\leq |y|^4 + C\quad  \text{and} \ \ (\tfrac12+\eta)\|\partial_y g(y)\|_{\ell^2}^2 \leq 3|y|^2 + C.\]

The main result of this subsection reads as follows.

\begin{theorem}[Global well-posedness]
\label{thm:AC}
Let $2\leq d\leq 4$.
Suppose that Assumption \ref{ass:AC} holds. Let $u_0\in L^0_{\F_0}(\O;H^1(\Tor^d))$. Then \eqref{eq:allen_cahn} has a unique global solution
$$
u\in C([0,\infty);H^1(\Tor^d))\cap L^2_{\loc}([0,\infty);H^2(\Tor^d)) \text{ a.s.\ }
$$
Moreover, for all $T\in (0,\infty)$, there exists $C_T>0$ independent of $u_0$ such that
\begin{align}
\label{eq:ACestimate_1}
\E\int_0^T \|u(t)\|^2_{H^2(\Tor^d)} \, \dd t & \leq C_T(1+\E\|u_0\|_{H^1(\Tor^d)}^2),
\\   \label{eq:ACestimate_2} \E\Big[\sup_{t\in [0,T]}\|u\|_{H^1(\Tor^d)}^2\Big] & \leq C_T(1+\E\|u_0\|_{H^1(\Tor^d)}^2),
\end{align}
Finally, $u$ depends  continuously on the initial data $u_0$ in probability in the sense of Theorem \ref{thm:contdepdata} with $H=H^1(\Tor^d)$ and $V=H^{2}(\Tor^d)$.
 \end{theorem}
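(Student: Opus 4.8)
The plan is to recast \eqref{eq:allen_cahn} in the abstract form \eqref{eq:SEE} and to verify the hypotheses of Theorem \ref{thm:globalclas} (for global existence, uniqueness, and the a priori bound \eqref{eq:aprioriLpclas}, which is exactly \eqref{eq:ACestimate_1}--\eqref{eq:ACestimate_2}) and of Theorem \ref{thm:contdepdata} (for the continuous dependence in probability). I would take $U=\ell^2$, $V_0=L^2(\Tor^d)$ and $V_1=H^2(\Tor^d)$, so that $H=[V_0,V_1]_{1/2}=H^1(\Tor^d)$, $V=H^2(\Tor^d)$ and $V^*=L^2(\Tor^d)$, with pairing $\langle\varphi,v\rangle=(\varphi,(I-\Delta)v)_{L^2}$, the continuous extension of $(\cdot,\cdot)_{H^1}$. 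By \eqref{eq:reiteration} one has $V_\theta=[L^2,H^2]_\theta=H^{2\theta}(\Tor^d)$, hence $\|\cdot\|_\beta=\|\cdot\|_{H^{2\beta}(\Tor^d)}$. I would then set $A_0u=-\Delta u$, $F(t,u)=u-u^3$, $(B_0(t,u)w)_n=(b_n(t,\cdot,u)\cdot\nabla)w$ and $(G(t,u))_n=g_n(t,\cdot,u)$, so that \eqref{eq:allen_cahn} becomes \eqref{eq:SEE} with $A=A_0u-F$, $B=B_0u+G$ and $f=g=0$.

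Next I would check Assumption \ref{ass:condFG}. The measurability in \eqref{it:condFG0} is routine. For \eqref{it:condFG1}, integration by parts and \eqref{eq:isometry_H_2_integration_by_parts} give $\langle u,A_0u\rangle=\|\nabla u\|_{L^2}^2+\|\Delta u\|_{L^2}^2$; in $\tfrac12\nn B_0(t,v)u\nn_H^2=\tfrac12\sum_n\|(b_n(t,\cdot,v)\cdot\nabla)u\|_{H^1}^2$ the top-order part is bounded by $\tfrac{\nu}{2}\|\Delta u\|_{L^2}^2$ with $\nu<2$ (using \eqref{it:AC_1} and \eqref{eq:isometry_H_2_integration_by_parts}), while the commutator and lower-order terms are controlled by $\varepsilon\|u\|_{H^2}^2+C_\varepsilon\|u\|_{H^1}^2$ via H\"older with $\tfrac1p+\tfrac1{d+\delta}=\tfrac12$, the subcritical embedding $H^1(\Tor^d)\hookrightarrow L^p(\Tor^d)$ (which is where $\delta>0$ enters), interpolation and \eqref{it:AC_2}; choosing $\varepsilon$ small yields the coercivity estimate with constants uniform over $\|v\|_H\le n$. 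The Lipschitz dependence of $(A_0,B_0)$ on $v$ in \eqref{it:condFG2} follows from the Lipschitz regularity of $b$ in its last argument (and is void when $b$ is $u$-independent).

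I would then verify the local Lipschitz and growth bounds on $F$ and $G$ in \eqref{it:condFG2}. For $F$, using Assumption \ref{ass:AC} and H\"older,
\[
\|F(u)-F(v)\|_{V^*}\lesssim\|u-v\|_{L^2}+\|(|u|^2+|v|^2)(u-v)\|_{L^2}\lesssim(1+\|u\|_{L^6}^2+\|v\|_{L^6}^2)\|u-v\|_{L^6},
\]
and the Sobolev embedding $H^{4/3}(\Tor^d)\hookrightarrow L^6(\Tor^d)$, valid precisely when $d\le4$, gives \eqref{it:condFG2} with $\beta_1=2/3$ and $\rho_1=2$, the critical case $2\beta_1=1+\tfrac1{\rho_1+1}$. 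For $G$, writing $\nabla_x[g_n(\cdot,u)]=(\nabla_xg_n)(\cdot,u)+(\partial_yg_n)(\cdot,u)\nabla u$ and using \eqref{it:AC_4}, the relevant differences reduce to products of the form $\|(1+|u|+|v|)(u-v)\|_{L^2}$, $\|(1+|u|)\nabla(u-v)\|_{L^2}$ and $\||u-v|\,\nabla v\|_{L^2}$; splitting each by H\"older and using $H^{3/2}(\Tor^d)\hookrightarrow L^q(\Tor^d)$ for the appropriate $q$ (again requiring $d\le4$) yields \eqref{it:condFG2} with $\beta_2=3/4$ and $\rho_2=1$, once more critical. The growth bounds $\|F(u)\|_{V^*}\lesssim1+\|u\|_{\beta_1}^3$ and $\nn G(t,u)\nn_H\lesssim1+\|u\|_{\beta_2}^2$ are obtained in the same way from the growth parts of \eqref{it:AC_4} and the quadratic growth of $g$.

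Finally I would verify the coercivity condition \eqref{eq:pcoercivityclas1}. The key identity, from integration by parts (using $\int_{\Tor^d}v^3\Delta v\,dx=-3\int_{\Tor^d}|v|^2|\nabla v|^2\,dx$), is $\langle v,F(v)\rangle=\|v\|_{H^1}^2-\int_{\Tor^d}(|v|^4+3|v|^2|\nabla v|^2)\,dx$, whence, together with $\langle v,A_0v\rangle=\|\nabla v\|_{L^2}^2+\|\Delta v\|_{L^2}^2$,
\begin{align*}
\langle v,A(t,v)\rangle-(\tfrac12+\eta)\nn B(t,v)\nn_H^2
&=\|\nabla v\|_{L^2}^2+\|\Delta v\|_{L^2}^2-\|v\|_{H^1}^2+\int_{\Tor^d}\bigl(|v|^4+3|v|^2|\nabla v|^2\bigr)\,dx\\
&\quad-(\tfrac12+\eta)\sum_{n\ge1}\|(b_n\cdot\nabla)v+g_n(\cdot,v)\|_{H^1}^2 .
\end{align*}
Applying Assumption \ref{ass:AC}\eqref{it:AC_5} to the last term cancels the quartic integrals and leaves, by \eqref{eq:isometry_H_2_integration_by_parts}, at least $\|\nabla v\|_{L^2}^2+\theta\|\Delta v\|_{L^2}^2-C(\|v\|_{H^1}^2+1)\ge\theta\|v\|_V^2-M\|v\|_H^2-|\phi|^2$ for a suitable constant $\phi\in L^2((0,T)\times\Omega)$, so \eqref{eq:pcoercivityclas1} holds; Theorems \ref{thm:globalclas} and \ref{thm:contdepdata} then give the stated conclusions. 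The main obstacle is the verification of \eqref{it:condFG2} in the endpoint dimension $d=4$, where both $\beta_1=2/3$ and $\beta_2=3/4$ are critical and all Sobolev embeddings invoked ($H^{4/3}\hookrightarrow L^6$, and the $H^{3/2}\hookrightarrow L^q$ used through the H\"older splittings for $G$) sit exactly at their threshold, so the bookkeeping of exponents must be done with care; a second delicate point is matching the quartic terms produced by $\langle v,F(v)\rangle$ with the right-hand side of the structural hypothesis \eqref{it:AC_5}, which is what makes \eqref{it:AC_5} the natural and essentially sharp coercivity assumption here.
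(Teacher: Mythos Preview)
Your proposal is correct and follows essentially the same route as the paper: the same functional setup ($V=H^2$, $H=H^1$, $V^*=L^2$ on $\Tor^d$), the same verification of Assumption~\ref{ass:condFG} with the critical pairs $(\beta_1,\rho_1)=(2/3,2)$ for $F$ and $(\beta_2,\rho_2)=(3/4,1)$ for $G$, and the same reduction of \eqref{eq:pcoercivityclas1} to Assumption~\ref{ass:AC}\eqref{it:AC_5} via the identity $\langle v,F(v)\rangle=\|v\|_{H^1}^2-\int_{\Tor^d}(|v|^4+3|v|^2|\nabla v|^2)\,dx$. One minor remark: in Assumption~\ref{ass:AC} the coefficient $b$ is in fact $u$-independent (it is declared $\Progress\otimes\Borel(\Tor^d)$-measurable; the extra $\R$ in its stated domain is a typo), so the equation is semilinear and your parenthetical ``void when $b$ is $u$-independent'' is the operative case --- you need not (and, with the hypotheses as written, could not) check any Lipschitz dependence of $B_0$ on $v$.
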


\begin{remark}\label{rem:eta0Allen-Cahn}
In case Assumption \ref{ass:AC}\eqref{it:AC_5} holds with $\eta=0$, the above theorem still holds. However, the estimate \eqref{eq:ACestimate_2} has to be replaced by the weaker bounds \eqref{eq:aprioriVH1}-\eqref{eq:aprioriVH2} with $H=H^1(\Tor^d)$ and $V=H^2(\Tor^d)$.
The proof is the same as below, but one has to use Theorem \ref{thm:globaleta0} instead of Theorem \ref{thm:globalclas}.
\end{remark}

As mentioned at the beginning of this subsection, the deterministic nonlinearity in \eqref{eq:allen_cahn} does not satisfy the classical local monotonicity condition for stochastic evolution equations, and therefore there are difficulties in applying the classical framework to obtain well-posedness for \eqref{eq:allen_cahn}.

\begin{proof}
As usual, we view \eqref{eq:allen_cahn} in the form \eqref{eq:SEE} by setting $U=\ell^2$, $H=H^1(\Tor^d)$, $V=H^2(\Tor^d)$ and, for $v\in V$,
\begin{align*}
A_0 v&= -\Delta v, & B_0 v&=((b_n\cdot\nabla)v)_{n\geq 1},\\
F(\cdot,v)&=v-v^3, &  G(\cdot,v)&=(g_n(\cdot,v))_{n\geq 1}.
\end{align*}
As usual, $H$ is endowed with the scalar product $(f,g)_{H}=\int_{\Tor^d}  (fg +\nabla f\cdot \nabla g)\,\dd x$. Therefore, $V^*=L^2$ and
$
\langle f,g \rangle =\int_{\Tor^d} f g -(\Delta f) g\,\dd x
$
(cf.\ Example \ref{ex:strong_setting}). In particular,
\begin{equation}
\label{eq:AC_A_0_coercivity}
\langle A_0 v,v\rangle = \int_{\Tor^d} \big(|\nabla v|^2 + |\Delta v|^2\big)\, \dd x , \ \ \ v\in H^2(\Tor^d).
\end{equation}

The claim of Theorem \ref{thm:AC} follows from Theorem \ref{thm:globalclas} provided we check the assumptions.
Note that \eqref{eq:pcoercivityclas1} follows from Assumption \ref{ass:AC}\eqref{it:AC_5}. It remains to check
Assumption \ref{ass:condFG}.
To begin we check Assumption \ref{ass:condFG}\eqref{it:condFG1}. In the proof below we write $\norm{\cdot}_{L^2(\Tor^d)} = \|\cdot\|_{\calL_2(\ell^2;L^2(\T^d))} = \|\cdot\|_{L^2(\T^d;\ell^2)}$. Note that for all $v\in V$ and $\varepsilon>0$,
\begin{align*}
\frac12&\norm{\nabla B_0 v}_{L^2(\Tor^d)}^2
 \\ &\leq \frac{1+\varepsilon}{2} \sum_{n\geq 1}\sum_{k=1}^d \int_{\Tor^d}\Big|\sum_{j=1}^d b_n^j\partial^2_{j,k} v\Big|^2\, \dd x + C_{\varepsilon}\max_{j}\int_{\Tor^d}\|\nabla  b^j\|^2_{\ell^2} | \nabla v|^2\, \dd x\\
&\stackrel{(i)}{\leq}
(1+\varepsilon)\frac{\ellip}{2} \sum_{j,k=1} \int_{\Tor^d}|\partial^2_{j,k} v|^2\, \dd x
 + C_{\varepsilon}\max_{j}\|\nabla b^j\|_{L^{d+\delta}(\ell^2)}^2\|\nabla v\|_{L^{r}(\T^d)}^2
\\ &\stackrel{(ii)}{\leq}
(1+\varepsilon)\frac{\ellip}{2} \|\Delta v\|^2_{L^2(\T^d)} + C_{\varepsilon}M^2\|\nabla v\|_{L^{r}(\T^d)}^2.
\end{align*}
In $(i)$  we used  Assumption \ref{ass:AC}\eqref{it:AC_1} for the first term, and H\"older's for the second term with exponents $\frac{1}{d+\s}+\frac{1}{r}=\frac{1}{2}$ (where $\s$ is as in  Assumption \ref{ass:AC}\eqref{it:AC_2}). In $(ii)$ we used \eqref{eq:isometry_H_2_integration_by_parts} and Assumption \ref{ass:AC}\eqref{it:AC_2}.

Since $r\in (2,\frac{2d}{d-2})$, there exists $\mu\in (0,1)$ such that $H^{\mu}(\Tor^d)\embed L^r(\Tor^d)$  by Sobolev embedding. Thus by standard interpolation inequalities, for every $\gamma>0$
\[\|\nabla v\|_{L^{r}(\T^d)}^2\leq C_{0} \|\nabla v\|_{H^{\mu}(\T^d)}^2\leq \mu \|\Delta v\|_{L^2(\T^d)}^2  + C_{\mu} \|\nabla v\|_{L^2(\T^d)}^2.\]
Thus, by choosing $\mu>0$ small enough we obtain
\begin{align*}
\frac{1}{2}\norm{\nabla B_0 v}_{L^2}^2\leq
(1+2\varepsilon)\frac{\ellip}{2} \|\Delta v\|^2_{L^2(\T^d)} + C_{\varepsilon,\ellip} M^2\| v\|_{H^1(\T^d)}^2.
\end{align*}
Since $H^{1,d+\delta}(\Tor^d)\embed L^{\infty}(\Tor^d)$, we also have
$
\norm{B_0 v}_{L^2}^2\leq C M^2 \|v\|_{H^1}^2.
$
Therefore, choosing $\varepsilon>0$ such that $\theta:=1-(1+2\varepsilon)\frac{\ellip}{2}>0$, \eqref{eq:AC_A_0_coercivity} give
\begin{align*}
\langle A_0 v, v\rangle - \frac{1}{2}\norm{\nabla B_0 v}_{H^1(\T^d)}\geq \theta\|\Delta v\|_{L^2(\T^d)}^2 - C_{\varepsilon,\ellip}' M^2\| v\|_{H^1}^2.
\end{align*}
By \eqref{eq:isometry_H_2_integration_by_parts}, this implies Assumption \ref{ass:condFG}\eqref{it:condFG1}.

Finally, we check Assumption \ref{ass:condFG}\eqref{it:condFG2}. For $u,v\in V$ note that
\begin{align}
\label{eq:AC_estimate_F}
\|F(\cdot,u)-F(\cdot,v)\|_{L^2}
&\lesssim \|(1+|u|^2+|v|^{2})|u-v|\|_{L^2}\\
\nonumber
&\lesssim (1+\|u\|_{L^6}^2 + \|v\|_{L^6}^2) \|u-v\|_{L^6}\\
\nonumber
&\lesssim (1+\|u\|_{\beta_1}^2 + \|v\|_{\beta_1}^2) \|u-v\|_{\beta_1}
\end{align}
where $\beta_1=\frac{2}{3}$ and we used that $H^{2\beta}(\Tor^d)\embed L^6(\Tor^d)$ since $d\leq 4$. Setting $m_F=1$ and  $\rho_1=2$, the condition \eqref{eq:condbetarho} follows for $j=1$.

To estimate $G$, first observe that for all $u,v\in H^2(\T^d)$ by H\"older's inequality and Sobolev embedding (using $d\leq 4$), for $k\in \{0,1\}$,
\begin{align}\label{eq:obsallencahn}
\big\||\partial_j^k u||v|\big\|_{L^2(\T^d)}\leq
\| u\|_{H^{1,8/3}(\T^d)}\|v\|_{L^{8}(\T^d)}\lesssim \|u\|_{H^{3/2}(\T^d)}\|v\|_{H^{3/2}(\T^d)}.
\end{align}
Note that for $u,v\in H^2(\T^d)$
\begin{align*}
\norm{G(u) - G(v)}_{H^1(\T^d)}^2\leq \|g(u) - g(v)\|_{L^2(\T^d;\ell^2)}^2 + \sum_{j=1}^d\|\partial_j[g(u) - g(v)]\|_{L^2(\T^d;\ell^2)}^2.
\end{align*}
Assumption \ref{ass:AC}\eqref{it:AC_5} and \eqref{eq:obsallencahn} imply that
\begin{align*}
\|g(u) - g(v)\|_{L^2(\T^d;\ell^2)} &\lesssim \|1+|u|+|v|(u-v)\|_{L^2(\T^d)}\\ & \lesssim  (1+\|u\|_{H^{3/2}(\T^d)}+\|v\|_{H^{3/2}(\T^d)}) \|u-v\|_{H^{3/2}(\T^d)}.
\end{align*}
For the derivative term we can write
\[\|\partial_j[g(u) - g(v)]\|_{\ell^2} \leq \|\partial_{x_j} g(u) -\partial_{x_j} g(v)\|_{\ell^2}+ \|\partial_y g(u) \partial_j u -\partial_{y} g(v) \partial_j v\|_{\ell^2}.\]
By Assumption \ref{ass:AC}\eqref{it:AC_5}, the first term can be estimated as before. For the second term Assumption \ref{ass:AC}\eqref{it:AC_5} gives
\begin{align*}
\|\partial_y g(u) \partial_j u -\partial_{y} g(v) \partial_j v\|_{\ell^2}& \leq \|\partial_y g(u) (\partial_j u -\partial_j v)\|_{\ell^2} + \|(\partial_y g(u) -\partial_{y} g(v)) \partial_j v\|_{\ell^2}
\\ & \lesssim (1+|u|)|u-v| + (1+\partial_j v)|u-v|.
\end{align*}
Therefore, taking $L^2(\T^d)$-norms and applying \eqref{eq:obsallencahn} we find that
\begin{align*}
\|\partial_y g(u) \partial_j u -\partial_{y} g(v) \partial_j v\|_{L^2(\T^d;\ell^2)}\lesssim  (1+\|u\|_{H^{3/2}(\T^d)} + \|v\|_{H^{3/2}(\T^d)}) \|u-v\|_{H^{3/2}(\T^d)}.
\end{align*}
The growth estimate can be proved in a similar way, and thus Assumption \ref{ass:condFG}\eqref{it:condFG1} holds with $m_G=1$, $\rho_2=1$  and $\beta_2=\frac{3}{4}$, where we note that \eqref{eq:condbetarho} holds for $j=2$
\end{proof}

\begin{example}
\label{ex:AC_g}
Suppose that $
g_n(\cdot,v)=(\g_n v^2)_{n\geq 1}$ where $\g=(\g_n)_{n\geq 1}\in \ell^2$. For convenience we set $b_n\equiv 0$.
It is immediate to see that Assumption \ref{ass:AC}\eqref{it:AC_4} holds.  One can readily check that Assumption \ref{ass:AC}\eqref{it:AC_5} is satisfied with $\eta>0$ if $\|\g\|_{\ell^2}^2<\frac{3}{2}$, and it is satisfied with $\eta=0$ if $\|\g\|_{\ell^2}^2=\frac{3}{2}$ (see Remark \ref{rem:eta0Allen-Cahn}).
\end{example}

\begin{remark}
\label{r:AC_more_general_f}
As in the previous subsections, we may replace the nonlinearity $u-u^3$ by a more general one $f(u)$. Indeed, inspecting the above proof it is enough to assume that $F(\cdot,v):=f(\cdot,v)$ satisfies
\eqref{eq:AC_estimate_F} and that the term $\int_{\Tor^d} (|v|^4 + 3 |v|^2 |\nabla v|^2)\, \dd x$ on the RHS in the condition of Assumption \ref{ass:AC}\eqref{it:AC_5} is replaced by
$$
-\int_{\Tor^d} \big[ f'(u) |\nabla u|^2 + f(u)u\big]\, \dd x.
$$
\end{remark}

\begin{remark}[Kraichnan's noise]
In the study of fluid flows transport noise $(b_n\cdot\nabla)u$ is typically used to model turbulence, see e.g.\ \cite{F15_book,GK96,K68,MiRo04}. In Kraichnan's theory, it is important to choose $b$ as rough as possible. In this respect, Assumption \ref{ass:AC}\eqref{it:AC_2} allows us to cover only \emph{regular} Kraichnan's noise (see e.g.\ \cite[Section 5]{GI21_transport} and the references therein). For the irregular case (e.g.\ $b\in C^{\varepsilon}(\ell^2)$ for $\varepsilon>0$ small), Theorem \ref{thm:AC} cannot be applied.
However, by using $L^p$-theory one can show that global well-posedness for \eqref{eq:allen_cahn} still holds, see  \cite{AVreaction-global}.
 \end{remark}

\subsection{A stochastic quasi-linear second order equation}\label{ss:quasi}
In this section we give a toy example of a quasi-linear SPDE in one dimension to which our setting applies.
Due to the quasi-linear structure we are forced to work with the strong setting $V = H^2$ and $H = H^1$, since we need $H\hookrightarrow L^\infty$. In case one would use $L^q$-theory, then one can actually handle higher dimensions, since the Sobolev embeddings theorems become better for $q$ large. However, $L^q$-theory is outside the scope of the current paper.

On $\R$ consider the problem:
\begin{equation}
\label{eq:quasilin}
\left\{
\begin{aligned}
\dd u
&=\big[a(u) u''+  f(u)\big] \, \dd t + \sum_{n\geq 1} \big[b_n(u)u'  + g_n(u)\big] \, \dd w_t^n,&\quad &\text{ on }\R,
\\ u(0)&=u_0,&\quad &\text{ on }\R.
\end{aligned}
\right.
\end{equation}

\begin{assumption}\label{ass:local}
Suppose that $a:\R\to \R$, $b:\R\to \ell^2$, $f:\R\to \R$ and $g:\R\to \ell^2$ are mappings for which there exist $\theta>0$, $C\geq 0$ such that for all $x,y\in \R$
\begin{align*}
&a(y) - \frac12\sum_{n\geq 1} |b_n(y)|^2\geq \theta,
\qquad\quad b,g\in C^1(\R;\ell^2),
\\& a,b, b',  f, g, g' \ \text{are locally Lipschitz}, \  \text{and} \ f(0) = 0, \  g(0)=0.
\end{align*}
 \end{assumption}
It is also possible to consider $(t,\omega)$ and space-dependent coefficients $(a,b,f,g)$. In that case the conditions $f(0)=0$ and $g(0)=0$ can be weakened to an integrability condition.

In order to reformulate \eqref{eq:quasilin} as \eqref{eq:SEE}, we need some smoothness in the space $H$ in order to deal with the quasi-linear terms. Therefore, let $U = \ell^2$,  $H = H^1(\R)$, $V = H^2(\R)$ and $V^* = L^2(\R)$. Here we use $(u,v)_{H} = (u,v)_{L^2} + (u',v')_{L^2}$ and
for the duality between $V^*$ and $V$ we set $\langle u,v\rangle = (u,v)_{L^2} - (u,v'')_{L^2}$.

In order to prove local existence and uniqueness we reformulate \eqref{eq:quasilin} in the form \eqref{eq:SEE}. Let
\[A_0(u)v = -a(v)u'',  \ \ \ B_0(v)u = b(v)u', \ \ \ F(u) = f(u), \ \ \  G(u) = (g_n(u))_{n\geq 1}.\]
and set $A(u) =  A_0(u)u - F(u)$, and $B(u) = B_0(u)u + G(u)$.
For local existence and uniqueness, it remains to check that Assumption \ref{ass:condFG} holds.

The following estimates will be used below:
\begin{align}\label{eq:interpestd=1}
\|u\|_{C_b} \leq K\|u\|_{H^1}, \qquad \|u\|_{L^4} \leq  K \|u\|_{H^{1/4}}.
\end{align}
Assumption \ref{ass:condFG}\eqref{it:condFG0} is simple to verify. To check condition Assumption \ref{ass:condFG}\eqref{it:condFG1} note that for all $u\in H$ with $\|u\|_{H}\leq m$  and $v\in V$,
\begin{equation}\label{eq:lincoerc}
\begin{aligned}
 \langle &A_0(u)v, v\rangle  -  \frac12 \nn B_0(u) v\nn_{H}^2
\\ &=  - (a(u) v'', v)_{L^2} +(a(u) v'', v'')_{L^2} - \frac{1}{2} \sum_{n\geq 1} \|(b_n(u) v')'\|_{L^2}^2 + \|b_n(u)v'\|_{L^2}^2
\\ & = \Big(\big[a(u) - \frac12\sum_{n\geq 1} |b_n(u)|^2\big] v'', v''\Big)_{L^2} - R
\\ & \geq \theta\|v\|_{V}^2 - \theta \|v\|_{H}^2 - R,
\end{aligned}
\end{equation}
where, by Assumption \ref{ass:local},  for all $\varepsilon>0$ the rest term $R$ satisfies
\begin{align*}
R &= (a(u) v'', v)_{L^2} + \frac{1}{2} \sum_{n\geq 1}\big( \|b_n'(u) u'v'\|_{L^2}^2 +(b_n'(u) u'v',b_n(u)v'')_{L^2} + \|b_n(u)v'\|_{L^2}^2\big)
\\ & \leq C_m (\|v\|_{V} \|v\|_{V^*} +  \|v\|_{H}^2 + \|v\|_{H} \|v\|_{V}+  \|v\|_{H}^2)
\\ & \leq \varepsilon \|v\|_{V}^2 + C_{\varepsilon,m} \|v\|_{H}^2.
\end{align*}
In the above we used \eqref{eq:interpestd=1} and $\|u\|_H\leq m$.
Taking $\varepsilon\in (0,\theta)$, Assumption \ref{ass:condFG}\eqref{it:condFG1} follows.

To check Assumption \ref{ass:condFG}\eqref{it:condFG2} let $u,v\in V$ be such that $\|u\|_{H},\|v\|_{H}\leq m$ and $w\in V$. Then, by \eqref{eq:interpestd=1} and Assumption \ref{ass:local},
\begin{align*}
\|A_0(u)w - A_0(v)w\|_{V^*} &= \|(a(u)-a(v))w''\|_{L^2} \\ & \leq \|a(u) - a(v)\|_{L^{\infty}} \|w\|_{V} \\ & \lesssim_m \|u-v\|_{L^{\infty}} \|w\|_{V}\lesssim_m K\|u-v\|_{H} \|w\|_{V}.
\end{align*}
The growth estimate is proved in the same way. Analogously,
\begin{align*}
\nn B_0(u)w - B_0(v) w\nn_{H} &\leq \|(b(u)-b(v))w'\|_{L^2(\ell^2)}+ \|[(b(u)-b(v))w']'\|_{L^2(\ell^2)}
\\ & \leq \|(b'(u) - b'(v)) u' w'\|_{L^2(\R;\ell^2)} + \|b'(u) (u' - v') w'\|_{L^2(\ell^2)} \\ & \quad + \|(b(u)  - b(v))w''\|_{L^2(\ell^2)} + \|(b(u) - b(v))w' \|_{L^2(\ell^2)}
\\ & \lesssim_m \|u - v\|_{L^{\infty}} \|u'\|_{L^2} \|w'\|_{L^{\infty}} +
 \|u' - v'\|_{L^2} \|w'\|_{L^{\infty}}
\\ & \quad + \|u-v\|_{L^{\infty}} (\|w''\|_{L^2} +  \|w'\|_{L^2})
\\ & \lesssim_m \|u-v\|_{H} \|w\|_{V}.
\end{align*}
The growth estimate is similar.
To estimate $F$ note that for all $u,v\in V$ with $\|u\|_{H},\|v\|_{H}\leq m$
\begin{align*}
\|F(u) - F(v)\|_{V^*}&\leq C_{f,m}\|u-v\|_{L^{\infty}} \leq C_{m} \|u-v\|_{H},
\end{align*}
where we used \eqref{eq:interpestd=1}. The required growth estimate follows as well since $F(0) = 0$. Finally, as for the $B_0$-term, for $G$ we can write
\begin{align*}
\nn G(u) - G(v)\nn_{H}&\leq \|g(u) - g(v)\|_{L^2(\ell^2)} + \|g'(u)u' - g'(v)v'\|_{L^2(\ell^2)}.
\end{align*}
The first term is clearly $\lesssim_m \|u-v\|_{L^2}$ by \eqref{eq:interpestd=1}. From the latter inequality and Assumption \ref{ass:local}, the second term can be estimated as
\begin{align*}
\|g'(u)u' - g'(v)v'\|_{L^2(\ell^2)} & \leq \|g'(u)(u' - v')\|_{L^2(\ell^2)} + \|(g'(u) - g'(v))u'\|_{L^2(\ell^2)}
\\ & \lesssim_m \|u'-v'\|_{L^2} + \|u-v\|_{L^{\infty}} \|u'\|_{L^2}
\\ & \lesssim_m \|u-v\|_{H}.
\end{align*}
Again, the growth estimate follows by the above as $G(0)=0$ due to Assumption \ref{ass:local}.

From the above and Theorem \ref{thm:local} we obtain the following result.
\begin{theorem}[Local existence, uniqueness and blow-up criterion]\label{thm:localquasi}
Suppose Assumption \ref{ass:local} holds. Let $u_0\in L^0_{\F_0}(\Omega;H^1(\R))$. Then there exists a (unique) maximal solution $(u,\sigma)$ of \eqref{eq:quasilin} such that $u\in C([0,\sigma);H^1(\R))\cap L^2_{\rm loc}([0,\sigma);H^2(\R))$ a.s. Moreover,
\begin{align*}
\P\Big(\sigma<\infty, \sup_{t\in [0,\sigma)} \|u(t)\|^2_{H^1(\R)} + \int_{0}^{\sigma} \|u(t)\|_{H^2(\R)}^2\, \dd t<\infty \Big) = 0.
\end{align*}
\end{theorem}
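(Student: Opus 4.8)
The plan is to verify that the quasi-linear problem \eqref{eq:quasilin}, recast in the strong setting $U=\ell^2$, $H=H^1(\R)$, $V=H^2(\R)$, $V^*=L^2(\R)$ with the operators $A_0,B_0,F,G$ defined just above, satisfies all three parts of Assumption \ref{ass:condFG}; the conclusion is then an immediate application of Theorem \ref{thm:local}. Almost all of the required verifications have already been carried out in the computations preceding the statement, so the proof is essentially a bookkeeping argument that collects them. Concretely: Assumption \ref{ass:condFG}\eqref{it:condFG0} follows from the measurability and structure of $a,b,f,g$ together with $f(0)=g(0)=0$ (so that the inhomogeneities are zero); Assumption \ref{ass:condFG}\eqref{it:condFG1} is exactly \eqref{eq:lincoerc}, where the stochastic parabolicity $a(y)-\tfrac12\sum_n|b_n(y)|^2\geq\theta$ and the bound on the rest term $R$ (using $\|u\|_\infty\leq Km$ for $\|u\|_H\leq m$) give $\theta_m$ and $M_m$; and Assumption \ref{ass:condFG}\eqref{it:condFG2} is the collection of the four Lipschitz/growth estimates for $A_0$, $B_0$, $F$, $G$ displayed above, all proved using the Sobolev embeddings $V\embed C^1_b(\R)$, $H\embed C_b(\R)$ and the interpolation inequalities \eqref{eq:interpestd=1}.

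The first step is to record the functional-analytic setup: check $V_1=H^2(\R)\embed V_0=L^2(\R)$ densely and continuously, that $H=[L^2,H^2]_{1/2}=H^1(\R)$ with the stated inner product, and that $V^*=L^2(\R)$ with the pairing $\lb u,v\rb=(u,v)_{L^2}-(u'',v)_{L^2}$, exactly as in Example \ref{ex:strong_setting} with $\Dom=\R$. The second step is to identify the parameters $\beta_j,\rho_j$: for $F$ and $G$ one may simply take $m_F=m_G=1$ and, since the $F$- and $G$-estimates produce $\|u-v\|_H$ on the right, use $\beta_1=\beta_2=1/2$ (or any $\beta_j\in(1/2,1)$ with $\rho_j=0$), so that \eqref{eq:condbetarho} holds trivially; the $A_0,B_0$-estimates are the quasi-linear Lipschitz conditions and involve no $\beta_j,\rho_j$. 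The third step is just to cite \eqref{eq:lincoerc} for \eqref{it:condFG1} and the displayed chain of inequalities for $\|A_0(u)w-A_0(v)w\|_{V^*}$, $\nn B_0(u)w-B_0(v)w\nn_H$, $\|F(u)-F(v)\|_{V^*}$, $\nn G(u)-G(v)\nn_H$ and their growth analogues for \eqref{it:condFG2}, all of which are already written out. Finally, applying Theorem \ref{thm:local} yields the maximal solution $(u,\sigma)$ with paths in $C([0,\sigma);H^1(\R))\cap L^2_{\rm loc}([0,\sigma);H^2(\R))$ and the blow-up criterion \eqref{eq:blowupcrit}, which upon substituting $H=H^1(\R)$ and $V=H^2(\R)$ is precisely the stated criterion.

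I do not expect any serious obstacle: the statement is a local well-posedness result and, unlike the global well-posedness theorems, it does not require a coercivity inequality for the full nonlinearity or any delicate a priori estimate. The one place where a little care is needed is the estimate of the rest term $R$ in \eqref{eq:lincoerc}, namely controlling the terms $\sum_n(b_n'(u)u'v',b_n(u)v'')_{L^2}$ and $\sum_n\|b_n'(u)u'v'\|_{L^2}^2$ by $\varepsilon\|v\|_V^2+C_\varepsilon\widetilde m^2\|v\|_H^2$; this uses $\|b'\|_{L^\infty(\ell^2)}\leq C$, $\|b\|_{L^\infty}\lesssim 1+\widetilde m$ on the relevant ball, $\|u'\|_{L^2}\leq m$, $\|v'\|_{L^\infty}\leq\|v\|_H$ via \eqref{eq:interpestd=1}, and Young's inequality, and has been spelled out above. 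Everything else is a direct transcription of the preceding computations, so the proof consists of assembling them and invoking Theorem \ref{thm:local}.
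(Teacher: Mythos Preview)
Your proposal is correct and follows exactly the paper's approach: the paper's proof of Theorem~\ref{thm:localquasi} is literally the sentence ``From the above and Theorem~\ref{thm:local} we obtain the following result'', with ``the above'' being precisely the verification of Assumption~\ref{ass:condFG}\eqref{it:condFG0}--\eqref{it:condFG2} that you describe. Two small slips to fix: Assumption~\ref{ass:condFG}\eqref{it:condFG2} requires $\beta_j\in(1/2,1)$ strictly, so take $\beta_j\in(1/2,1)$ with $\rho_j=0$ (as you note parenthetically) rather than $\beta_j=1/2$; and in your remark on the rest term $R$, the bound $\|v'\|_{L^\infty}\lesssim\|v\|_H$ is false---one has $\|v'\|_{L^\infty}\lesssim\|v'\|_{H^1}\leq\|v\|_V$, and the resulting $\|v\|_V^2$-contribution is then absorbed via interpolation and Young's inequality with an $m$-dependent $\varepsilon$, which is all that Assumption~\ref{ass:condFG}\eqref{it:condFG1} requires.
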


The next condition will ensure global well-posedness.
\begin{assumption}\label{ass:global}
Suppose that $b(u)$ is constant in $u$, $f\in C^1(\R)$ and there exists $C\geq 0$ such that
\[ |a(x)|\leq C, \ \ \  x f(x)\leq C(|x|^2 +1), \ \ \ f'(x)\leq C \ \ \text{ and } \ \ \|g'(x)\|_{\ell^2}\leq C  \ \ \ x\in\R.\]
\end{assumption}

We do not know if the assumption on $b$ can be avoided.
As the proof below shows, Assumption \ref{ass:global} can be weakened to a joint condition on $(a,b,f,g)$, cf.\ Assumption \ref{ass:AC}\eqref{it:AC_5} for a similar situation.
Note that the condition on $f$ holds for the important class of functions of the form
\begin{equation*}
f(x) = -c|x|^h x + \phi(x),
\end{equation*}
where $c,h>0$, and $\phi\in C^1(\R)$ is such that $\phi(x)\leq C(1+|x|^{h+1-\delta})$ and $\phi'(x)\leq C(1+|x|^{h-\delta})$ for some $C\geq 0$ and $\delta\in (0,h)$.

\begin{theorem}[Global well-posedness]
\label{thm:globalquasi}
Suppose that Assumptions \ref{ass:local} and \ref{ass:global} hold. Let $u_0\in L^0_{\F_0}(\Omega;H^1(\R))$. Then \eqref{eq:quasilin} has a unique global solution
\begin{equation*}
u\in C([0,\infty);H^1(\R))\cap L^2_{\rm loc}([0,\infty);H^2(\R)) \ a.s.
\end{equation*}
and for every $T>0$ there exists a constant $C_T$ independent of $u_0$ such that
\begin{align*}
\E \|u\|_{C([0,T];H^1(\R))}^2 + \E\|u\|_{L^2(0,T;H^2(\R))}^2\leq C_T(1+\E\|u_0\|_{H^1(\R)}^2).
\end{align*}
Finally,  $u$ depends  continuously on the initial data $u_0$ in probability in the sense of Theorem \ref{thm:contdepdata} with $H=H^1(\R)$ and $V=H^{2}(\R)$.
\end{theorem}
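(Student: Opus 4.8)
The plan is to realize \eqref{eq:quasilin} in the abstract form \eqref{eq:SEE} exactly as in the proof of Theorem \ref{thm:localquasi} ($U=\ell^2$, $H=H^1(\R)$, $V=H^2(\R)$, $V^*=L^2(\R)$, and $(A,B)$ split as in \eqref{eq:ABstruc} with $A_0(v)w=a(v)w''$, $B_0w=(b_nw')_{n}$, $F(v)=f(v)$, $G(v)=(g_n(v))_{n}$), so that Assumption \ref{ass:condFG} is already available and Theorem \ref{thm:localquasi} supplies a maximal local solution $(u,\sigma)$ together with its blow-up criterion. To turn this into a global solution it suffices, for every $T>0$, to show $\sup_{[0,\sigma\wedge T)}\|u\|_{H^1}^2+\int_0^{\sigma\wedge T}\|u\|_{H^2}^2<\infty$ a.s.; I would obtain this by a localized energy estimate along the lines of the proofs of Propositions \ref{prop:globalesteta} and \ref{prop:globalesteta0}: set $\tau_R=\inf\{t\in[0,\sigma):\|u(t)\|_{H^1}\ge R\}$, apply It\^o's formula to $\tfrac12\|u\|_{H^1}^2$ on $[0,\tau_R\wedge T)$, and control every resulting term.

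In that identity the leading dissipative term is $\int_\R a(u)|u''|^2\,dx$, which by Assumption \ref{ass:local} is $\ge(\theta+\tfrac12\|b\|_{\ell^2}^2)\|u''\|_{L^2}^2$, while (because $b$ is constant, Assumption \ref{ass:global}) the It\^o correction of the transport part is exactly $\tfrac12\nn B_0u\nn_{H^1}^2=\tfrac12\|b\|_{\ell^2}^2(\|u'\|_{L^2}^2+\|u''\|_{L^2}^2)$; subtracting leaves $\ge\theta\|u''\|_{L^2}^2$ up to lower order, which is where the stochastic parabolicity is used. The cross term $(B_0u,G(u))_{\calL_2(\ell^2,H^1)}$, the correction $\tfrac12\nn G(u)\nn_{H^1}^2\le C\|u\|_{H^1}^2$ (by \eqref{eq:Ggrowth}), and the reaction contributions $(f(u),u)_{L^2}$ and $\int_\R f'(u)|u'|^2\,dx$ (bounded via $uf(u)\le C|u|^2$, a consequence of $f(0)=0$, local Lipschitzness and $xf(x)\le C(|x|^2+1)$, together with $f'\le C$) are all absorbed into $\varepsilon\|u''\|_{L^2}^2+C_\varepsilon\|u\|_{H^1}^2$. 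The martingale $\int_0^\cdot(B(u)^*u,dW)$ simplifies because, $b$ being constant, $(b_nu',u)_{H^1}=\tfrac12\int_\R\partial_x(|u|^2)\,dx+\tfrac12\int_\R\partial_x(|u'|^2)\,dx=0$, so only $G$ contributes and $\|B(u)^*u\|_{\ell^2}\le\nn G(u)\nn_{H^1}\|u\|_{H^1}\le C\|u\|_{H^1}^2$, which the Burkholder--Davis--Gundy inequality handles exactly as in Proposition \ref{prop:globalesteta}.

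The hard part is the genuinely quasi-linear term $\int_\R a'(u)\,u\,|u'|^2\,dx$ produced by the $u$-dependence of the leading coefficient; it has no semi-linear analogue, and it is precisely why Theorem \ref{thm:globalquasi} does not literally follow from Theorem \ref{thm:globalclas}. Integrating by parts it equals $-\int_\R\Psi(u)\,u''\,dx$ with $\Psi(y):=\int_0^y s\,a'(s)\,ds$, so $|\Psi(y)|\le\tfrac12\|a'\|_{L^\infty}y^2$, and using $H^1(\R)\hookrightarrow L^\infty(\R)$ from \eqref{eq:interpestd=1},
\[
\Bigl|\int_\R\Psi(u)\,u''\,dx\Bigr|\lesssim\|u\|_{L^4}^2\|u''\|_{L^2}\le\|u\|_{L^\infty}\|u\|_{L^2}\|u''\|_{L^2}\lesssim\|u\|_{H^1}\|u\|_{L^2}\|u''\|_{L^2}\le\varepsilon\|u''\|_{L^2}^2+C_\varepsilon\|u\|_{L^2}^2\|u\|_{H^1}^2.
\]
The $\varepsilon\|u''\|_{L^2}^2$ is absorbed into $\theta\|u''\|_{L^2}^2$, but the remainder is super-quadratic in $\|u\|_{H^1}$ and cannot be dominated by any $M\|u\|_{H^1}^2$ on the right-hand side of the coercivity condition \eqref{eq:pcoercivityclas1}. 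So rather than discarding it I would keep it as $C_\varepsilon\|u(t)\|_{L^2}^2\|u(t)\|_{H^1}^2$ and close the estimate with the stochastic Gronwall Lemma \ref{lem:gronwall}, applied to $X(t)=\sup_{[0,t\wedge\tau_R\wedge T)}\|u\|_{H^1}^2$ with the weight $t\mapsto\|u(t)\|_{L^2}^2$, which is a.s.\ bounded on $[0,\tau_R\wedge T)$ since $\|u\|_{H^1}\le R$ there. This gives $\sup_{[0,\tau_R\wedge T)}\|u\|_{H^1}^2+\int_0^{\tau_R\wedge T}\|u\|_{H^2}^2<\infty$ a.s.\ together with the attendant tail and moment bounds; letting $R\to\infty$ and using $u\in C([0,\sigma);H^1)$ to see that $\tau_R\uparrow\sigma\wedge T$, the blow-up criterion of Theorem \ref{thm:localquasi} forces $\sigma=\infty$ a.s.\ and yields $\E\|u\|_{C([0,T];H^1)}^2+\E\|u\|_{L^2(0,T;H^2)}^2\le C_T(1+\E\|u_0\|_{H^1}^2)$. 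Continuous dependence on $u_0$ is then proved exactly as in Theorem \ref{thm:contdepdata} and Proposition \ref{prop:estimateLpcont}, the quasi-linear term being treated on the stopped intervals by the same interpolation plus stochastic Gronwall argument.
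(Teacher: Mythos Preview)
Your route departs from the paper's in a significant way. The paper does \emph{not} redo the It\^o/Gronwall machinery here: it simply claims that the coercivity condition \eqref{eq:pcoercivityclas1} holds and applies Theorem \ref{thm:globalclas} (and Theorem \ref{thm:contdepdata}) directly. Concretely, it sets $u=v$ in the remainder $R$ of \eqref{eq:lincoerc}, uses that $b$ is constant so the $b'$–terms vanish, and asserts that the remaining term $(a(v)v'',v)_{L^2}$ can be absorbed as $\varepsilon\|v\|_V^2+C_\varepsilon\|v\|_H^2$; the $F$ and $G$ contributions are then handled via $f'\le C$, $yf(y)\le C(1+|y|^2)$, and \eqref{eq:Ggrowth}. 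Your observation that $(a(v)v'',v)_{L^2}$ produces a super–quadratic term in $\|v\|_H$ when $a$ is genuinely unbounded is a legitimate concern about the paper's argument; if $a$ is bounded (a natural reading consistent with ``one can check''), the paper's shorter route works as written.

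That said, your proposed workaround has a genuine gap. You apply the stochastic Gronwall Lemma \ref{lem:gronwall} on $[0,\tau_R\wedge T)$ with weight $f(t)=\|u(t)\|_{L^2}^2$, justifying $f\in L^1$ by $\|u\|_{H^1}\le R$ there. But then the tail bound \eqref{eq:tail_probability_grownall} depends on $R$ through $\int_0^{\tau_R\wedge T}f\le R^2T$, so the resulting estimate on $\sup_{[0,\tau_R)}\|u\|_{H^1}^2$ blows up as $R\to\infty$ and yields nothing beyond the trivial bound $\le R^2$. Letting $\tau_R\uparrow\sigma\wedge T$ therefore does \emph{not} give $\sup_{[0,\sigma\wedge T)}\|u\|_{H^1}<\infty$ a.s., and the blow-up criterion cannot be invoked. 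To make a Gronwall argument of this type close, you would need an \emph{independent} a~priori control of $\int_0^{\sigma\wedge T}\|u(t)\|_{L^2}^2\,dt$ (for instance via a separate $L^2$-level energy estimate), which you have not supplied and which is itself obstructed by the same quasi-linear term. Without such an input the argument is circular.
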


\begin{proof}
By Theorem \ref{thm:globalclas} it remains to check \eqref{eq:pcoercivityclas1}.
Using that $b$ is independent of $u$ and the boundedness of $a$, taking $u=v$ in \eqref{eq:lincoerc}, one can check that $R$ can be estimated independently of $m$, and thus in the same way as before for $\eta>0$ small enough
\begin{align*}
\langle A_0(v)v, v\rangle & -  \big(\frac12 +\eta\big)\nn B_0 v\nn_{H}^2 \geq \theta'\|v\|_{V}^2 - C_{\varepsilon}'\|v\|_{H}^2.
\end{align*}

Concerning the $F$-term we have
\begin{align*}
 \langle F(v), v\rangle  &= (f(v),v)_{L^2}  - (f(v), v'')_{L^2}
 \\ & =  (f(v),v)_{L^2}  + (f'(v)v', v')_{L^2}
 \\ & \leq C(1+\|v\|^2_{L^2}) + C\|v'\|_{L^2}
 \\ & \leq C(1+\|v\|^2_{H}),
\end{align*}
and for $G$-term it is suffices to note that $\nn G(v)\nn_{H}\leq C(1+\|v\|_{H})$ by Assumption \ref{ass:global} and $G(0)=0$ (cf.\ Assumption \ref{ass:local}). Putting the estimates together we see that
\begin{align*}
\langle v, A(t,v)\rangle  - \big(\frac{1}{2}+\eta\big)\nn B(t,v)\nn_{H}^2 & =\langle A_0(v)v, v\rangle  -  (\tfrac12 +\eta)\nn B_0 v\nn_{H}^2 \\ &  \ -  \langle F(v), v\rangle - (\tfrac12+\eta)\nn G(v)\nn_{H}^2 - (G(v), B_0(v))
\\ & \geq \theta'\|v\|_{V}^2 - C'(1+\|v\|_{H}^2) - (1+2\eta)(G(v), B_0(v)).
\end{align*}
By Cauchy-Schwarz' inequality for the $\calL_2(U,H)$-inner product we obtain that for every $\delta>0$, $(G(v), B_0(v))\leq \delta \nn B_0(v)\nn_{H}^2  + C_{\delta}\nn G(v)\nn_{H}^2$, and hence \eqref{eq:pcoercivityclas1} follows.

For the continuous dependence it remains to apply Theorem \ref{thm:contdepdata}.
\end{proof}

\subsection{Stochastic Swift--Hohenberg equation}\label{ss:SH}
The stochastic Swift-Hohenberg equation has been studied by several authors using different methods (see \cite{Gao} and references therein).

On an open bounded $C^2$-domain $\Dom\subseteq \R^d$ consider
\begin{equation}
\label{eq:Swift-Hohenberg}
\left\{
\begin{aligned}
\dd u  &= [-\Delta^2 u - 2\Delta u+f(\cdot, u)]\, \dd t + \sum_{n\geq 1}g_n(\cdot, u) \, \dd w_t^n,&\quad & \text{ on } \Dom,
\\ u& =0, \ \  \text{ and } \ \ \Delta u =0,&\quad & \text{ on }\partial \Dom,
\\ u(0)&=u_0,&\quad  &\text{ on } \Dom.
\end{aligned}\right.
\end{equation}
Unbounded domains could also be considered using a variation of the assumptions below.

\begin{assumption}\label{ass:SH}
Let $d\geq 1$, and
\[
\rho \in
\left\{
\begin{aligned}
&\Big[0,\frac{d+4}{d}\Big] &\quad & \text{if \ $d\in\{1,2,3\}$},
\\ &[0,2) & &\text{if \ $d= 4$},
\\ &\Big[0,\frac{8}{d}\Big] & &\text{if \ $d\geq 5$}.
\end{aligned}\right.
\]
Suppose that $f\in C^1(\R)$ and $g:[0,\infty)\times \Omega\times\Dom\times\R^{1+d}\to \ell^2$ are $\Progress\times \Dom \times \mathcal{B}(\R^{1+d})$ and there exist constants $C,\eta>0$ such that a.s.\ for all $t\in \R_+$, $x\in \Dom$, $y,y'\in\R$, $z,z'\in \R^d$
\begin{align*}
 |f(y)-f(y')|
& \leq C (1+|y|^{\rho}+|y'|^{\rho})|y-y'|,
\\ |f(y)|&\leq C(1+|y|^{\rho+1}),
\\ \|g(t,x,y,z)-g(t,x,y',z')\|_{\ell^2}
&\leq C (1+|y|^{\frac{\rho}{2}} + |y'|^{\frac{\rho}{2}}) |y-y'| + C|z-z'|,
\\ \|(g(t,x,y,z))_{n\geq 1}\|_{\ell^2} &\leq C(1+|y|^{\frac{\rho}{2}} + |y'|^{\frac{\rho}{2}})(1+|y|)+C(1+|z|),
\\ f(y) y + (\tfrac{1}{2}+\eta) \|g(t,x,y,z)\|_{\ell^2}^2& \leq C(1+|y|^2+|z|^2).
\end{align*}
\end{assumption}
The classical Swift-Hohenberg nonlinearity $f(y) = c y - y^3$ with $\rho = 2$ satisfies the above condition in the physical dimensions $d\in \{1,2,3\}$. Local monotonicity and \eqref{eq:growthB} hold if $g$ has linear growth, but not in the case $g$ has quadratic growth which we also allow.

\begin{theorem}[Global well-posedness]
\label{thm:SH}
Suppose that Assumption \ref{ass:SH} holds. Let $u_0\in L^0_{\F_0}(\Omega; L^2(\Dom))$.
Then \eqref{eq:Swift-Hohenberg} has a unique global solution
\begin{equation}\label{eq:solspaceSH}
u\in C([0,\infty);L^2(\Dom))\cap L^2_{\rm loc}([0,\infty);H^2(\Dom)\cap H^1_0(\Dom)) \ a.s.,
\end{equation}
and for every $T>0$ there exists a constant $C_T$ independent of $u_0$ such that
\[\E \|u\|_{C([0,T];L^2(\Dom))}^2 + \E\|u\|_{L^2(0,T;H^2(\Dom))}^2\leq C_T(1+\E\|u_0\|^2_{L^2(\Dom)}).\]
Finally,  $u$ depends  continuously on the initial data $u_0$ in probability in the sense of Theorem \ref{thm:contdepdata} with $H=L^2(\Dom)$ and $V = H^2(\Dom)\cap H^1_0(\Dom)$.
\end{theorem}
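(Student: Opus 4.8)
The plan is to recast \eqref{eq:Swift-Hohenberg} in the abstract form \eqref{eq:SEE} and to verify the hypotheses of Theorems~\ref{thm:globalclas} and~\ref{thm:contdepdata}, following the scheme of Subsections~\ref{ss:CH}--\ref{ss:AllenCahn}. Concretely, I would take $U=\ell^2$, $H=L^2(\Dom)$, $V=H^2(\Dom)\cap H^1_0(\Dom)$ and $V^*=(H^2(\Dom)\cap H^1_0(\Dom))^*$, which is the weak setting of Example~\ref{ex:weaksetting} for the hinged biharmonic operator; by \eqref{eq:reiteration} and standard interpolation one has $V_\beta\embed H^{4\beta-2}(\Dom)$ for $\beta\in(\tfrac12,1)$, while $L^r(\Dom)\embed V^*$ precisely when $H^2(\Dom)\cap H^1_0(\Dom)\embed L^{r'}(\Dom)$. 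Setting $A_0 v=\Delta^2 v+2\Delta v$ (so that $\lb w,A_0 v\rb=(\Delta w,\Delta v)_{L^2}-2(\nabla w,\nabla v)_{L^2}$), $B_0=0$, $F(v)=f(v)$ and $G(t,v)=(g_n(t,\cdot,v,\nabla v))_{n\ge1}$, the equation \eqref{eq:Swift-Hohenberg} becomes \eqref{eq:SEE}.

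Next I would check Assumption~\ref{ass:condFG}. Item \eqref{it:condFG0} is routine, and \eqref{it:condFG1} holds because $B_0=0$ and $\lb v,A_0 v\rb=\|\Delta v\|_{L^2}^2-2\|\nabla v\|_{L^2}^2\ge\theta\|v\|_V^2-M\|v\|_H^2$, which follows from the Young inequality $\|\nabla v\|_{L^2}^2\le\varepsilon\|\Delta v\|_{L^2}^2+C_\varepsilon\|v\|_{L^2}^2$ together with the $H^2$-elliptic estimate $\|v\|_{H^2}\lesssim\|\Delta v\|_{L^2}+\|v\|_{L^2}$ for the Dirichlet Laplacian on the $C^2$-domain $\Dom$. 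For \eqref{it:condFG2}, the conditions on $A_0$ and $B_0$ are trivial since the leading part is semilinear; for $F$ I would use $\|F(u)-F(v)\|_{V^*}\lesssim\|f(u)-f(v)\|_{L^r}$ for the smallest $r$ with $L^r\embed V^*$, then Assumption~\ref{ass:SH}, H\"older's inequality and the Sobolev embedding $H^{4\beta_1-2}\embed L^q$, arriving at the critical relation $2\beta_1=1+\tfrac1{\rho+1}$ with $\rho_1=\rho$; for $G$ I would split the bound into its power part (critical, with growth exponent $\rho/2$, both factors placed in $L^{\rho+2}$) and its gradient part (with growth exponent $0$ and $\beta_2=\tfrac34$, so that $V_{\beta_2}\embed H^1$), the growth estimates being handled identically.

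The main obstacle is extracting the \emph{sharp} admissible range of $\rho$ from the $F$-term. The point is to keep the H\"older split $\|(1+|u|^{\rho}+|v|^{\rho})|u-v|\|_{L^r}\le\|1+|u|^{\rho}+|v|^{\rho}\|_{L^p}\|u-v\|_{L^q}$ with $\tfrac1p+\tfrac1q=\tfrac1r$ \emph{unbalanced}, optimizing $p,q$ against the two Sobolev embeddings $H^{4\beta_1-2}\embed L^{p\rho}$ and $H^{4\beta_1-2}\embed L^{q}$ at the critical exponent $\beta_1$. Since $H^2\cap H^1_0\embed L^\infty$ for $d\le3$ one may take $r=1$, which yields exactly $\rho\le\tfrac{d+4}{d}$; for $d=4$ any $r>1$ is admissible, giving $\rho<2$; for $d\ge5$ one must take $r\ge\tfrac{2d}{d+4}$, giving $\rho\le\tfrac8d$. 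A parallel computation shows the $G$-power part never imposes more than $\rho\le\tfrac8d$, so in all cases the ranges in Assumption~\ref{ass:SH} are reached.

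Finally I would verify the coercivity condition \eqref{eq:pcoercivityclas1}. Since $B_0=0$,
\[
\lb v,A(t,v)\rb-(\tfrac12+\eta)\nn B(t,v)\nn_H^2
=\|\Delta v\|_{L^2}^2-2\|\nabla v\|_{L^2}^2-\int_\Dom\Big(f(v)\,v+(\tfrac12+\eta)\|g(t,\cdot,v,\nabla v)\|_{\ell^2}^2\Big)\,dx ,
\]
and the last inequality in Assumption~\ref{ass:SH} bounds the integrand from above by $C(1+|v|^2+|\nabla v|^2)$; absorbing the resulting $\|\nabla v\|_{L^2}^2$ terms as in the proof of \eqref{it:condFG1} gives $\lb v,A(t,v)\rb-(\tfrac12+\eta)\nn B(t,v)\nn_H^2\ge\theta\|v\|_V^2-M\|v\|_H^2-C|\Dom|$, i.e.\ \eqref{eq:pcoercivityclas1} with $\phi\equiv(C|\Dom|)^{1/2}$ (the harmless constant $M$ in front of $\|v\|_H^2$ plays no role, exactly as in the Cahn--Hilliard argument). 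Theorems~\ref{thm:globalclas} and~\ref{thm:contdepdata} then yield the global solution, the a priori bound, and the continuous dependence on $u_0$ asserted in Theorem~\ref{thm:SH}.
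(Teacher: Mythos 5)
Your proposal follows essentially the same route as the paper: recast \eqref{eq:Swift-Hohenberg} in the form \eqref{eq:SEE} with $H=L^2(\Dom)$, $V=H^2(\Dom)\cap H^1_0(\Dom)$, $B_0=0$; verify Assumption~\ref{ass:condFG}\eqref{it:condFG2} for $F$ and $G$ by H\"older (with all factors in $L^{r(\rho+1)}$, i.e.\ $p=r(\rho+1)/\rho$ and $q=r(\rho+1)$) together with the embeddings $V_\beta\hookrightarrow H^{4\beta-2}(\Dom)$ and Sobolev, arriving at the critical relation $2\beta_1=1+\frac{1}{\rho+1}$ and the stated $\rho$-ranges via the choice of $r$; split $G$ into a critical power part and a subcritical gradient part; and then check \eqref{eq:pcoercivityclas1} from the last line of Assumption~\ref{ass:SH}, absorbing the resulting $\|\nabla v\|_{L^2}^2$ term. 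One small remark: your treatment of the coercivity of $A_0$ (keeping the $-2\|\nabla v\|_{L^2}^2$ contribution and absorbing it by Young plus the $H^2$-elliptic estimate) is in fact slightly more careful than the paper's displayed computation, which contains a typo in the weak formulation of $A_0$ and tacitly drops the lower-order term.
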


\begin{proof}
We formulate \eqref{eq:Swift-Hohenberg} in the form \eqref{eq:SEE}.
Let $H = L^2(\Dom)$ and $V = H^2(\Dom)\cap H^1_0(\Dom)$.
Then by \eqref{eq:reiteration} for $\theta\in (0,1)$ one has
\begin{align*}
V_{\frac{1+\theta}{2}} = [H, V]_{\theta} \hookrightarrow [L^2(\Dom), H^{2}(\Dom)]_{\theta} = H^{2\theta}(\Dom),
\end{align*}
where in the last step we used the smoothness of $\Dom$ and standard results on complex or real interpolation.

Let $A = A_0 + F$ where, we define $A_0\in \calL(V,V^*)$ and $F:V\to V^*$  by
\[\langle A_0 u,v\rangle = ( \Delta u,\Delta v)_{L^2} + 2(\nabla u, \nabla v)_{L^2} \quad \text{and} \quad \langle F(u),v\rangle = -(f(u),v)_{L^2}.\]
Let $B = B_0 + G$, where $B_0 = 0$ and $G:[0,\infty)\times \Omega\times V\to \calL_2(U,H)$ is defined by
\[G(t,u)_n(x) = g_n(t,x,u(x), \nabla u(x)).\]

We check Assumption \ref{ass:condFG}\eqref{it:condFG1}.
By the smoothness and boundedness of $\Dom$ and standard elliptic theory for second order operators (see \cite[Theorem 8.8]{GT83}) there exist $\theta>0$ such that for all $u\in V$
\begin{align*}
 \langle u, A_0 u\rangle = \|\Delta u\|_{L^2(\Dom)}^2 \geq \theta\|u\|_{V}^2.
\end{align*}
In order to check Assumption \ref{ass:condFG}\eqref{it:condFG2} we start with $F$. We focus on the local Lipschitz estimate, since the growth condition can be proved in the same way. One has
\begin{align*}
& \|F(u) - F(v)\|_{V^*} \stackrel{(i)}{\lesssim} \|f(\cdot, u) - f(\cdot, v)\|_{L^r(\Dom)}
\\ & \lesssim  \|(1+|u|^{\rho}+|v|^{\rho}) (u-v)\|_{L^{r}(\Dom)} & \text{(by Assumption \ref{ass:SH})}
\\ & \lesssim (1+\|u\|^{\rho}_{L^{r(\rho+1)}(\Dom)}+\|v\|^{\rho}_{L^{r(\rho+1)}(\Dom)} )\|u-v\|_{L^{r(\rho+1)}(\Dom)} & \text{(by H\"older's inequality)}
\\ & \stackrel{(ii)}{\lesssim} (1+\|u\|^{\rho}_{H^{4\beta_1-2}(\Dom)}+\|v\|^{\rho}_{H^{4\beta_1-2}(\Dom)}) 
 \|u-v\|_{H^{4\beta_1-2}(\Dom)} & \text{(by Sobolev embedding).}
\end{align*}
In the Sobolev embedding in $(i)$ we need
\[-\frac{d}{r}\geq -2 - \frac{d}{2}  \ \ \text{and} \ \ r\in [1, 2], \]
where $r\in (1,2]$ if $-\frac{d}{r}= -2 - \frac{d}{2}$.
In the Sobolev embedding in $(ii)$ we need
\begin{align}\label{eq:condbetaSH}
4\beta_1 -2- \frac{d}{2} \geq -\frac{d}{r(\rho+1)}
\end{align}
By Assumption \ref{ass:condFG}\eqref{it:condFG2}, we also need $2\beta_1\leq 1+\frac{1}{\rho+1}$. In order to have as much flexibility as possible we take $r$ small and set $2\beta_1 = 1+\frac{1}{\rho+1}$.

For $d\geq 5$ taking $r$ such that $-\frac{d}{r}= -2 - \frac{d}{2}$, \eqref{eq:condbetaSH} leads to
\[\frac{2}{\rho+1} - \frac{d}{2} = 4\beta_1 -2 - \frac{d}{2} \geq -\frac{d}{r(\rho+1)} = -\frac{2}{\rho+1} - \frac{d}{2(\rho+1)},\]
which is equivalent to $\rho\leq \frac{8}{d}$. For $d\in \{1, 2, 3\}$ setting $r=1$, \eqref{eq:condbetaSH} leads to
\[\frac{2}{\rho+1} - \frac{d}{2} = 4\beta_1 -2 - \frac{d}{2} \geq -\frac{d}{\rho+1}.\]
which is equivalent to $\rho\leq \frac{d+4}{d}$. For $d=4$, we can take $r = 1+\varepsilon$. The same calculation leads to $\rho<2$ by taking $\varepsilon>0$ small enough.

For $G$ we have
\begin{align*}
\|G(t,u)& - G(t,v)\|_{L^2(\Dom;\ell^2)}    \lesssim
\|(1+|u|^{\rho/2}+|v|^{\rho/2}) (u-v)\|_{L^{2}(\Dom)} + \|\nabla u - \nabla v\|_{L^2(\Dom)}
\\ &\leq (1+\|u\|^{\rho}_{L^{\rho+2}(\Dom)}+\|v\|^{\rho}_{L^{\rho+2}(\Dom)} )\|u-v\|_{L^{\rho+2}(\Dom)} + \|u - v\|_{H^1}
\\ & \lesssim (1+\|u\|^{\rho/2}_{H^{4\beta_2-2}(\Dom)}+\|v\|^{\rho/2}_{H^{4\beta_2-2}(\Dom)})  \|u-v\|_{H^{4\beta_2-2}(\Dom)} + \|u - v\|_{H^1},
\end{align*}
where we used Sobolev embedding with $4\beta_2 - 2-\frac{d}{2} \geq -\frac{d}{\rho+2}$.  Setting $2\beta_2= 1+\frac{1}{\rho_2+1}$ with $\rho_2 = \rho/2$, the condition on $\beta_2$ is equivalent to $\rho\leq \frac{8}{d}$, which always holds. A similar growth condition can be checked for $\|G(t,u)\|_{L^2(\Dom;\ell^2)}$. Setting $\beta_3 = \frac{3}{4}$ and  $\rho_3 = 0$, it follows that Assumption \ref{ass:condFG}\eqref{it:condFG2} holds.

In order to check the coercivity condition \eqref{eq:pcoercivityclas1} it remains to note that the assumptions give
\begin{align*}
\langle A_0 u,u\rangle-\langle F(u),u\rangle - (\tfrac{1}{2}+\eta) \|G(t,u)\|_{\ell^2}^2& \geq \theta\|u\|_{V}^2 -C(1+\|u\|^2_{H}+\|u\|_{\frac34}^2)\\ & \geq \wt{\theta}\|u\|_{V}^2 -\wt{C}(1+\|u\|^2_{H}),
\end{align*}
where we used $\|u\|_{\frac34}\leq \varepsilon \|u\|_{V} + C_{\varepsilon} \|u\|_{H}$ for every $\varepsilon>0$.

Now the required result follows from Theorem \ref{thm:globalclas}.
\end{proof}

\begin{remark}
A version of Theorem \ref{thm:SH} also holds in the case $\eta=0$ in Assumption \ref{ass:SH} and follows from Theorem \ref{thm:globaleta0} instead. However, the estimate for the $L^2$-moment in the maximal inequality has to be replaced by the weaker estimates \eqref{eq:aprioriVH1}-\eqref{eq:aprioriVH2}.
\end{remark}

\appendix
\section{A stochastic Gronwall lemma}
\label{app:gronwall}
In this appendix we present a stochastic variant of the classical Gronwall's lemma. The following is a variant of \cite[Lemma 5.3]{GHZ09} with tail estimates probability. The lemma was already applied in the papers \cite{Primitive3, agresti2022stochastic}. Alternatively, one can often use the stochastic Gronwall inequalities of \cite[Theorem 4.1]{geiss2021sharp} and \cite[Theorem 2.1]{MeSch}.

\begin{lemma}[Stochastic Gronwall lemma]
\label{lem:gronwall}
Let $s\geq 0$, and let $\tau$ be a stopping time with values in $[s,\infty)$. Let $X,Y,f:[s,\tau)\times\O \to [0,\infty)$ are progressively measurable processes such that
a.s.\ $X$ has increasing and continuous paths, a.s.\ $Y\in L^1_{\loc}([s,\tau))$, and a.s.\ $f\in L^1(s,\tau)$. Suppose that there exist constants $\eta\geq 0$ and $C \geq 1$ such that for all stopping times $s\leq \lambda\leq \Lambda\leq \tau$
\begin{equation}
\label{eq:assumption_gronwall}
\E[X(\Lambda) ]+
\E\int_{\lambda}^{\Lambda} Y(t) \, \dd t \leq
C (\E[X(\lambda)]+\eta) + \E \Big[ (X(\Lambda)+\eta)\int_{\lambda}^{\Lambda} f(t)\, \dd t\Big],
\end{equation}
whenever the right-hand side is finite. Then one has
\begin{equation}
\label{eq:boundedness_as}
 X(\tau) + \int_s^{\tau} Y(t)\, \dd t<\infty \ \ \text{ a.s., }
 \end{equation}
where we set $X(\tau) = \lim_{t\uparrow \tau}X(t)$.
Moreover, for all $\gamma,R>0$
\begin{equation}
\label{eq:tail_probability_grownall}
\P\Big(X(\tau)+\int_s^{\tau} Y(t)\, \dd t \geq \gamma \Big)
\leq \frac{4C}{\gamma} e^{4C R} (\E[X(0)]+\eta)+ \P \Big(\int_s^{\tau}f(t)\, \dd t \geq R\Big).
\end{equation}
\end{lemma}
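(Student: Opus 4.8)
\emph{Proof strategy.} The plan is to localise so that both $t\mapsto\int_s^t f\,dr$ and $X$ become bounded, then iterate the one–step bound \eqref{eq:assumption_gronwall} along a partition of the time interval adapted to the level sets of $\int_s^\cdot f$, deduce from this a bound for $\E[X+\int Y]$ up to the localising times that is \emph{uniform} in the truncation level of $X$, and finally remove the localisations with Markov's inequality and monotone convergence. At the outset one may assume $\E[X(s)]<\infty$: if not, \eqref{eq:tail_probability_grownall} is vacuous, and \eqref{eq:boundedness_as} can be recovered afterwards by running the whole argument on each $\F_s$–measurable set $\{X(s)\le m\}$ and letting $m\to\infty$.

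Concretely, I would fix $R>0$, set $\mu_R:=\inf\{t\in[s,\tau):\int_s^tf\,dr>R\}$ (with $\inf\emptyset:=\tau$), and for $n\in\N$ set $\sigma_n:=\inf\{t\in[s,\tau):X(t)\ge n\}\wedge\tau$; both are stopping times, and by continuity of $X$ and of $\int_s^\cdot f$ one has $X\le n$, $\int_s^\cdot f\le R$ on $[s,\mu_R\wedge\sigma_n]$ and $X(\mu_R\wedge\sigma_n)\le n$. With $N:=\lceil 2R\rceil$ I would introduce stopping times $s=\nu_0\le\dots\le\nu_N=\mu_R\wedge\sigma_n$ by $\nu_j:=\inf\{t\le\mu_R\wedge\sigma_n:\int_s^tf\,dr\ge j/2\}\wedge(\mu_R\wedge\sigma_n)$, so that $\int_{\nu_{j-1}}^{\nu_j}f\,dr\le\tfrac12$. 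Applying \eqref{eq:assumption_gronwall} with $\lambda=\nu_{j-1}$, $\Lambda=\nu_j$ is legitimate, since $X\le n$ on $[s,\nu_j]$ and $\int f\le R$ make its right–hand side finite (and in particular force $\E\int_{\nu_{j-1}}^{\nu_j}Y<\infty$). Writing $a_j:=\E[X(\nu_j)]+\eta$ and bounding the last term of \eqref{eq:assumption_gronwall} via $\int_{\nu_{j-1}}^{\nu_j}f\le\tfrac12$ gives $\tfrac12a_j+\E\int_{\nu_{j-1}}^{\nu_j}Y\,dt\le Ca_{j-1}+\eta$. Introducing $d_j:=a_j+2\,\E\int_s^{\nu_j}Y\,dt$ and using $C\ge1$ together with $d_{j-1}\ge a_{j-1}\ge\eta$, this iterates to $d_j\le 2Cd_{j-1}+2\eta\le 4Cd_{j-1}$, whence $d_N\le(4C)^N d_0=(4C)^N(\E[X(s)]+\eta)$.

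To finish, since $N\le 2R+1$ and $\log(4C)\le 2C$ for every $C\ge1$, one has $(4C)^N\le 4C\,e^{2R\log(4C)}\le 4C\,e^{4CR}$; as $d_N\ge\E[X(\mu_R\wedge\sigma_n)+\int_s^{\mu_R\wedge\sigma_n}Y\,dt]$, this is a bound uniform in $n$. Letting $n\to\infty$, a Fatou argument rules out $X$ exploding before $\mu_R$ (otherwise $X(\sigma_n\wedge\mu_R)\to\infty$ on a set of positive probability, contradicting the uniform bound), so $\sigma_n\wedge\mu_R\uparrow\mu_R$ a.s., and monotone convergence upgrades the estimate to $\E[X(\mu_R)+\int_s^{\mu_R}Y\,dt]\le 4C\,e^{4CR}(\E[X(s)]+\eta)$. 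Markov's inequality, together with the inclusion $\{\int_s^\tau f\,dt\le R\}\subseteq\{\mu_R=\tau\}$ (on which $X(\tau)+\int_s^\tau Y=X(\mu_R)+\int_s^{\mu_R}Y$), then yields exactly \eqref{eq:tail_probability_grownall}; and letting $\gamma\to\infty$ and then $R\to\infty$ in \eqref{eq:tail_probability_grownall}, using $f\in L^1(s,\tau)$ a.s., gives \eqref{eq:boundedness_as}.

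\emph{Expected difficulty.} The argument is not analytically hard; the care goes entirely into the bookkeeping. One must ensure the right–hand side of \eqref{eq:assumption_gronwall} is finite at every step — this is precisely why one truncates both $X$ (by $\sigma_n$) and $\int f$ (by $\mu_R$), and why the partition must be taken along the \emph{random} times $\{\int_s^\cdot f=j/2\}$ rather than deterministic ones — and one must calibrate the step size ($\tfrac12$ here) so that the $\sim 2R$ geometric factors in the iteration collapse to the precise constants $4C$ and $e^{4CR}$, which rests on the elementary inequality $\log(4C)\le 2C$ for $C\ge1$. The degenerate cases ($\int_s^\tau f=R$ exactly, and $\E[X(s)]=\infty$) are disposed of by the remarks above.
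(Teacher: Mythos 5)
Your proposal is correct and follows essentially the same route as the paper: localise $\int f$ by a stopping time at level $R$, truncate $X$, partition $[s,\mu_R\wedge\sigma_n]$ by stopping times along the level sets of $\int_s^\cdot f$ with step $\tfrac12$, absorb the $\tfrac12\,\E[X(\Lambda)+\eta]$ term, iterate $O(R)$ times, and finish with Markov plus the observation that $\mu_R=\tau$ when $\int_s^\tau f\le R$. The only cosmetic differences are (i) you carry $\eta$ directly through the iteration via the auxiliary sequences $a_j$ and $d_j$, whereas the paper first proves the $\eta=0$ case with the sharper constant $2Ce^{2CR}$ and then reduces $\eta>0$ to it by passing to $(X+\eta,2C)$; and (ii) the paper's truncating time $\mu_k$ stops the combined quantity $X(t)+\int_s^t Y$ rather than $X$ alone — both choices make the right-hand side of \eqref{eq:assumption_gronwall} finite, which is all that is needed.
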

The proof below shows that for \eqref{eq:boundedness_as} to hold, it is enough to prove \eqref{eq:assumption_gronwall} for all $\Lambda$ such that $X(\Lambda)+\int_{s}^{\Lambda} Y(t)\,\dd t\leq K$, where $K$ is an arbitrary deterministic constant.

\begin{remark}\
\begin{itemize}
\item Choosing $R(\g) =\frac{1-\varepsilon}{4C} \ln  \g$ for $\varepsilon\in (0,1)$ and $\g$ large, \eqref{eq:tail_probability_grownall} shows that the tail probability of $X(\tau)+\int_{s}^{\tau} Y(t) \, \dd t$ converges to $0$ as $\g\to \infty$ in a quantitative way.
\item Usually Gronwall's inequality is formulated under the condition that
\begin{equation*}
\E[ X(\Lambda) ]+
\E\int_{\lambda}^{\Lambda} Y(t) \, \dd t \leq
C (\E[X(\lambda)]+\eta) + \E \Big[ \int_{\lambda}^{\Lambda} (X(t)+\eta) f(t) \, \dd t\Big],
\end{equation*}
that is stronger than \eqref{eq:assumption_gronwall}.
\item Lemma \ref{lem:gronwall} is very close to the deterministic result. Indeed, let $X$ and $f$ be deterministic and $Y\equiv 0$, $C=1$, $\eta=0$, and $s=0$. Taking $R:=  \int_0^{\tau}f(t)\, \dd t $ and \eqref{eq:tail_probability_grownall} gives
$$
 X(\tau) \leq 4C e^{(4C  \int_0^{\tau}f(t)\, \dd t) } X(0).
$$
The latter would also follows from the standard Gronwall lemma with a more precise bound on the constant.
\end{itemize}
\end{remark}

\begin{proof}[Proof of Lemma \ref{lem:gronwall}]
Without loss of generality we can assume $s=0$.
Since $\lim_{R\to \infty}\P( \int_0^{\tau}f(t)\, \dd t \geq R)=0$ by assumption,
one can check that \eqref{eq:boundedness_as} follows from \eqref{eq:tail_probability_grownall} by first letting
$\gamma\to \infty$ and then $R\to \infty$.

Hence, it remains to prove  \eqref{eq:tail_probability_grownall}. First suppose that $\eta  = 0$.
We will prove the following slightly stronger estimate with constant $2C$:
\begin{align}
\label{eq:tail_probability_sup_X_j}
\P\Big( X(\tau) +\int_0^{\tau} Y(t) \, \dd t\geq \gamma \Big)
&\leq \frac{2C}{\gamma} e^{2C R} \E[X(0)]+ \P \Big( \int_0^{\tau}f(t)\, \dd t \geq R\Big).
\end{align}
for all $R, \gamma >0$.
To this end, fix $\gamma, R>0$ and let
$$
\tau_R:=\inf \Big\{t\in [0,\tau)\,:\, \int_0^t f(s)\, \dd s\geq R \Big\}, \ \  \text{ where }\ \  \inf\emptyset:=\tau.
$$
Then $\tau_R$ is a stopping time since $f$ is progressive measurable. Note that
\begin{align*}
\P&\Big( X(\tau) +\int_0^{\tau} Y(t) \, \dd t\geq \gamma \Big)
\\ &\leq
\P\Big(X(\tau) +\int_0^{\tau} Y(t) \, \dd t\geq \gamma ,\, \int_0^{\tau} f(t)\, \dd t< R\Big)
+
\P\Big(  \int_0^{\tau} f(t)\, \dd t\geq  R\Big)\\
&\leq
\P\Big(X(\tau_R) +\int_0^{\tau_R} Y(t) \, \dd t\geq \gamma \Big)
+
\P\Big(  \int_0^{\tau} f(t)\, \dd t \geq  R\Big),
\end{align*}
where in the last inequality we used that $\tau=\tau_R$ on $\{\int_0^{\tau} f(t)\, \dd t <  R\}$. Hence, to prove \eqref{eq:tail_probability_grownall}, it remains to show that
\begin{equation}
\label{eq:intermediate_claim_grownall}
\P\Big( X(\tau_R) +\int_0^{\tau_R} Y(t) \, \dd t\geq \gamma \Big)\leq
\frac{2C}{\gamma} e^{2C R} \E[X(0)]
\end{equation}
To this end we use \eqref{eq:assumption_gronwall}.
For each $k\geq 1$ define the stopping time $\mu_k$ by
$$
\mu_k :=\inf\Big\{t\in [0,\tau_R)\,:\, X(t)+\int_{0}^t Y(s)\, \dd  s\geq k\Big\} \quad  \text{ where }\quad \inf\emptyset:=\tau_R.
$$
Then $\mu_k\leq \tau_R$ and $\lim_{k\to \infty} \mu_k = \tau_R$ a.s. By the monotone convergence theorem it suffices to prove \eqref{eq:intermediate_claim_grownall} with $\tau_R$ replaced by $\mu_k$.

Fix $k\geq 1$. We define a suitable random partition of the interval $[0,\mu_k]$ on which the integral of $f$ is small. For this we recursively define the stopping times $(\lambda_{m})_{m\geq 0}$ by $\lambda_{0}=0$ and for each $m\geq 1$,
\begin{equation*}
\lambda_{m} :=\inf\Big\{t\in [\lambda_{m-1},\mu_k]\,:\, \int_{\lambda_{m-1}}^t f(s)\, \dd s\geq \frac12\Big\} \quad  \text{ where }\quad \inf\emptyset:=\mu_k.
\end{equation*}
Since $\int_{0}^{\mu_k} f(s) \, \dd s \leq \int_{0}^{\tau_R} f(s) \, \dd s  \leq R$ a.s., we have $\lambda_{M} =\mu_k$ a.s.\ with $M:=\lceil 2 R\rceil$ independent of $k$ (where we set $\lceil n \rceil :=n+1$ for $n\in\N$).

By the assumption \eqref{eq:assumption_gronwall} and the fact that $\int_{\lambda_{m-1}}^{\lambda_{m}} f(s)\, \dd s\leq \frac12$, we find that
\begin{align*}
\E[ X(\lambda_m) ]+
\E\int_{\lambda_{m-1}}^{\lambda_{m}} Y(s) \, \dd s
\leq
C \E[X(\lambda_{m-1})] + \frac12\E[ X(\lambda_m)],
\end{align*}
for every $m\in \{1, \ldots, M\}$. Since $\E[ X(\lambda_m)] \leq \E[ X(\mu_k)] \leq k$, the above gives
\begin{equation*}
\E[X(\lambda_m) ]+
\E\int_{\lambda_{m-1}}^{\lambda_{m}} Y(s) \\, \dd s
\leq 2C\E[X(\lambda_{m-1})]
\end{equation*}
Iterating the latter (and using that $C\geq 1$) we find that
\begin{align*}
\E[ X(\lambda_{M})] + \E\int_{0}^{\lambda_{M}} Y(s) \, \dd s
&\leq 2C\E[ X(\lambda_{M-1})] + \E\int_{0}^{\lambda_{M-1}} Y(s) \, \dd  s
\\ & \leq (2C)^2\E[X(\lambda_{M-2})]  + \E\int_{0}^{\lambda_{M-2}} Y(s) \, \dd  s
\\& \leq \ldots \leq (2C)^M\E[X(0)]
\\ & \leq 2C e^{2R\ln(2C)}\E[X(0)] \leq 2C e^{2RC}\E[X(0)],
\end{align*}
where we used $M\leq 2R+1$ and $\ln(2C)\leq C$. Since  $\lambda_M = \mu_k$, Chebychev's inequality implies \eqref{eq:intermediate_claim_grownall} with $\tau_R$ replaced by $\mu_k$. This completes the proof of \eqref{eq:tail_probability_sup_X_j}.

If $\eta>0$, then we can add $\eta$ on both sides of \eqref{eq:assumption_gronwall} and replace $C$ by $2C$. It remains to apply \eqref{eq:tail_probability_sup_X_j} to the pair $(X+\eta,Y)$.
\end{proof}

\bibliographystyle{plain}
\bibliography{literature}

\end{document}